\documentclass{amsart}
\usepackage{amsmath}
\usepackage{amssymb}
\usepackage{amsfonts}
\usepackage{amsthm}
\usepackage{mathrsfs}
\usepackage[all]{xy}
\usepackage{ytableau}
\usepackage{color}
\usepackage{multicol}
\usepackage{enumerate}
\usepackage{enumitem}
\usepackage{comment}
\usepackage[vcentermath,enableskew]{youngtab}
\usepackage{multirow}
\usepackage{tikz-cd}
\usepackage{cancel}

\usetikzlibrary{arrows}

    \usetikzlibrary{decorations.markings, arrows.meta}

    \tikzset{
        mid arrow/.style={
            postaction={decorate,decoration={
                markings,
                mark=at position .6 with {\arrow[scale=1.2, #1]{Stealth}}
            }}
        }
    }

\numberwithin{equation}{section}

\theoremstyle{theorem}

\newtheorem{theorem}{Theorem}[section]
\newtheorem{corollary}[theorem]{Corollary}
\newtheorem{lemma}[theorem]{Lemma}

\newtheorem{proposition}[theorem]{Proposition}

\theoremstyle{definition}

\newtheorem{remark}[theorem]{Remark}
\newtheorem{definition}[theorem]{Definition}

\newtheorem{example}[theorem]{Example}

\newcommand{\sub}{\subseteq}

\newcommand{\sm}{\setminus}

\newcommand{\CC}{\mathbb{C}}

\newcommand{\ZZ}{\mathbb{Z}}

\newcommand{\FF}{\mathbb{F}}

\newcommand{\ff}{\mathcal{F}}

\newcommand{\mm}{\mathcal{M}}

\newcommand{\uu}{\mathcal{U}}

\newcommand{\LL}{\mathcal{L}}

\newcommand{\gl}{\mathfrak{gl}}
\newcommand{\sll}{\mathfrak{sl}}

\newcommand{\len}{\mathbf{len}}

\newcommand{\GL}{\operatorname{GL}}
\newcommand{\SL}{\operatorname{SL}}

\newcommand{\bfr}{\mathbf{r}}
\newcommand{\bfj}{\mathbf{j}}
\newcommand{\bfla}{\bs\lambda}

\newcommand*\circled[1]{\tikz[baseline=(char.base)]{
            \node[shape=circle,draw,inner sep=0.1pt] (char) {#1};}}
\newcommand{\ol}{\overline}
\newcommand{\ra}{\rightarrow}

\newcommand{\hra}{\hookrightarrow{}}

\newcommand{\onto}{\twoheadrightarrow}

\newcommand{\eps}{\epsilon}
\newcommand{\im}{\operatorname{Im}}
\newcommand{\End}{\operatorname{End}}

\newcommand{\bs}{\boldsymbol}

\newcommand{\uvsln}{U_v(\sll_n)}

\newcommand{\drcomm}[1]{\textcolor{red}{#1}}
\newcommand{\pgcomm}[1]{\textcolor{blue}{#1}}

\allowdisplaybreaks

\usepackage[colorlinks=true, pdfstartview=FitV, linkcolor=blue, citecolor=blue, urlcolor=blue]{hyperref}
\usepackage{graphicx} 

\title{Combinatorial description of the mirabolic quantum Schur-Weyl duality}
\author{Pallav Goyal}
\address{P.~Goyal: Centre for Quantum Mathematics, Department of Mathematics and Computer Science, University of Southern Denmark}
\email{pallavg@imada.sdu.dk}

\author{Daniele Rosso}
\address{D.~Rosso: Department of Mathematics and Actuarial Science, Indiana University Northwest}
\email{drosso@iu.edu}

\date{\today}

\begin{document}

\begin{abstract}
We prove an explicit correspondence between the irreducible finite dimensional representations of the mirabolic quantum group $MU(n)$ and those of the mirabolic Hecke algebra $MH_d$ given by the mirabolic quantum Schur-Weyl duality. In particular, this resolves a generalization of a conjecture by the second author. The proof is via a careful analysis of the $MU(n)$-weight spaces and the eigenvalues of the Jucys Murphy elements of $MH_d$ on the mirabolic tensor space.
\end{abstract}

\maketitle

\section{Introduction}

One way of studying representations of the general linear group $\GL_n=\GL(V)$ (or the special linear group $\SL(V)$) is by taking advantage of its Schur-Weyl duality with the symmetric group $\mathcal{S}_d$. We have a left diagonal action of $\GL(V)$ on the tensor product $V^{\otimes d}$, and a right action of $\mathcal{S}_d$ by permuting the tensor factors. Since the two actions satisfy the double centralizer property, we can use knowledge of the representations of $\mathcal S_d$ to gain information about the representations of $\GL(V)$. This was originally done in the work of Schur \cite{Sch01}, which was popularized by Weyl \cite{Weyl39}. This idea can be extended to their respective quantum versions: the quantized enveloping algebra of the Lie algebra $\gl_n$ on one side, and the Iwahori-Hecke algebra $H_d$ on the other side. This quantum Schur-Weyl duality was first described by Michio Jimbo in \cite{J86}.

A geometrical construction of the quantum Schur algebras, which are finite dimensional quotients of the quantized enveloping algebra of the Lie algebra $\gl_n$ (or $\sll_n$), was given by  Beilison, Lusztig and MacPherson in \cite{BLM}. They showed that the quantum Schur algebra is isomorphic to the space of $\GL_d$-invariant functions over the variety of pairs of \emph{$n$-step partial flags} in a $d$-dimensional vector space over the finite field $\FF_q$ with a convolution product.

Given that the Iwahori-Hecke algebra $H_d$ can be realized as the  space of invariant functions on the variety of pairs of \emph{complete} flags in $\FF_q^d$ (see \cite{Iwa64}), the quantum Schur-Weyl duality can then be interpreted as the convolution action on the space of invariant functions on pairs of one partial and one complete flag, as was done by Grojnowski and Lusztig in \cite{GL}.

All the above mentioned definitions can be extended to the `mirabolic' setting, as follows. Instead of considering pairs of flags (partial or complete), we can consider triples of two flags and a vector in $\FF_q^d$, where a convolution product was defined by the second author. By working with complete flags, this construction gives the mirabolic Hecke algebra, whose representations were studied in \cite{R14}. Analogously to \cite{BLM} and \cite{GL}, by working with triples of two $n$-step partial flags and a vector (resp. one partial flag, one complete flag, and a vector) we obtain mirabolic versions of the quantized enveloping algebra and a mirabolic quantum Schur-Weyl duality. These were first defined in \cite{R15}, where, in particular, a complete classification of the finite dimensional representations for $MU(n)$ (the mirabolic quantum group version of $\sll_n$) was obtained when $n=2$. This work was generalized by the authors in \cite{GR26} to classify representations for $MU(n)$  for arbitrary $n$.

In \cite[Theorem 6.9]{R15} there was also a first step towards a combinatorial description of the mirabolic quantum Schur-Weyl duality by decomposing the mirabolic tensor space as a left $MU(2)$-module. Some of the results in \cite{R15} were expanded upon by Fan, Zhang and Ma in \cite{FZM}, where they found 
explicit formulae for the left and right action involved in the mirabolic quantum Schur-Weyl duality. 

In this paper, 
we give a complete description of the mirabolic quantum Schur-Weyl duality in terms of the irreducible representations of the mirabolic Hecke algebra and the mirabolic quantum group, in particular proving a generalization for any $n$ of Conjecture 6.8 of \cite{R15}.

A summary of our main results and the organization of the paper is as follows:
\begin{itemize}
    \item In Section~\ref{sec:back}, we we discuss the relevant background on the mirabolic quantum group and the mirabolic Hecke algebra. In particular, we recall their constructions as convolution algebras as well as the classifications of their finite dimensional irreducible representations.
    \item In Section \ref{sec:MSWD}, we state our main result (Theorem \ref{thm:MSWD}) giving an explicit description of the correspondence between the simple modules of $MU(n)$ and those of the mirabolic Hecke algebra under the mirabolic quantum Schur-Weyl duality. We also prove some special cases of this result in Sections~\ref{sec:MSWDs=0} and~\ref{sec:leftmod}, which we will be needing for the general proof.
    \item Finally, in Section~\ref{sec:MSWDproof} we complete the proof of Theorem~\ref{thm:MSWD}. The main tool used is an explicit construction in Section~\ref{sec:zeroeigen} of vectors in the mirabolic tensor space $MV_{n,d}$ that are simultaneusly annihilated by the Jucys-Murphy elements of $MH_d$.
\end{itemize}

\subsection{Future Directions}
Our work raises several interesting questions, which can be the basis for future research.
\begin{itemize}
\item There has been a substantial interest in the study of the quantum group $\uvsln$, the Iwahori-Hecke algebra $H_d$ and their representations from the point of view of diagrammatics and categorification. This naturally motivates the investigation of similar questions for the algebras $MU(n)$ and $MH_d$. Also of interest might be the study of the mirabolic analog of the Temperley-Lieb algebra (see Remark~\ref{rem:TL}).
\item Another potential direction is to find explicit combinatorial descriptions of the isotypic components of the $MU(n)$ and $MH_d$-representations in the mirabolic tensor space $MV_{n,d}$, like the ones that exist in the usual quantum tensor space, in terms of Young symmetrizers. A starting point for this could be the basis computed in Lemma~\ref{lem:zerobasis}.
\item There exists a Schur-Weyl duality between the Ariki-Koike cyclotomic Hecke algebra and shifted quantum affine algebras (see \cite{WAD24}). Since the mirabolic Hecke algebra $MH_d$ is a quotient of the Ariki-Koike algebra, it would be interesting to explore connections between \cite{WAD24} and our results.
\item Apart from the usual Schur-Weyl duality, the representations of the Lie algebras $\sll_n$ and $\sll_m$ for $m,n\in \ZZ_{> 0}$ also satisfy a certain Howe duality with each other. In \cite{fan_mirabolic_2026}, the authors proved a mirabolic version of this Howe duality for the algebras $MU(n)$ and $MU(m)$. Using the classification theorem of \cite{GR26} and the techniques developed in this paper, it should be possible to find a combinatorial description of the results of \cite{fan_mirabolic_2026}.
\end{itemize}

\subsection*{Acknowledgments}
P.G. was partly supported in this research by the Villum Ascending Investigator grant no. $84071$ from Villum Fonden. D.R. was supported by an AMS-Simons Research Enhancement Grants for PUI faculty. The authors would like to thank Zhicheng Zhang for graciously sharing updated versions of the formulae in \cite{FZM}.

\section{Background on the mirabolic quantum group and the mirabolic Hecke algebra} \label{sec:back}

\subsection{Mirabolic quantum group}

We recall some definitions and results 
about the mirabolic quantum group $MU(n)$ from \cite{GR26}. We work over the field $K=\CC(v)$, where $v$ is an indeterminate, and we let $n$ be a positive integer.  Let $A=(a_{ij})$ be the Cartan matrix of type $A_{n-1}$, that is, for all $1\leq i,j,\leq n-1$ we have
$$a_{ij}=\begin{cases} 2 & \text{ if }i=j, \\ -1 & \text{ if }|i-j|=1, \\ 0 & \text{ if }|i-j|>1.\end{cases}$$

\begin{definition}\label{def:uvsln}
The \emph{mirabolic quantum group}, denoted by $MU(n)$, is the unital associative algebra over $K$ with generators $\ell$, and $e_i, f_i, k_i^{\pm 1}$ for $1\leq i\leq n-1$, subject to the relations:
\begin{align}\label{eq:usln.1} k_i^{\pm1}k_i^{\mp1}=1, & \quad k_ik_j=k_jk_i \\
\label{eq:usln.2} k_ie_jk_i^{-1} = v^{a_{ij}}e_j, & \quad k_if_jk_i^{-1} = v^{-a_{ij}}f_j\\
\label{eq:usln.3} e_if_j-f_je_i & =  \delta_{ij}\frac{k_i-k_i^{-1}}{v-v^{-1}}\\
\label{eq:usln.4} e_ie_j=e_je_i, & \quad f_if_j=f_jf_i \quad\text{ if }|i-j| \neq 1 \\
\label{eq:usln.5} (v+v^{-1})e_ie_je_i & =  e_i^2e_j + e_je_i^2 \quad\text{ if }|i-j| = 1\\
\label{eq:usln.6} (v+v^{-1})f_if_jf_i & =  f_i^2f_j + f_jf_i^2 \quad\text{ if }|i-j| = 1.\\
\label{eq:mun.1} \ell^2=\ell,&\quad k_i\ell=\ell k_i\\
\label{eq:mun.2} \ell e_i=e_i\ell,&\quad \ell f_i=f_i\ell \quad\text{ if }i\geq 2\\
\label{eq:mun.3} \ell e_1\ell=\ell e_1,&\quad \ell f_1\ell=f_1\ell\\
\label{eq:mun.4} (v+v^{-1})e_1\ell e_1 &= v^{-1}e_1^2\ell + v\ell e_1^2\\
\label{eq:mun.5} (v+v^{-1})f_1\ell f_1 &= v^{-1}\ell f_1^2 + vf_1^2\ell.\end{align}
\end{definition}

The subalgebra of $MU(n)$ generated by $e_i, f_i, k_i^{\pm1}$ (with relations \eqref{eq:usln.1}-\eqref{eq:usln.6}) is precisely the quantized enveloping algebra $\uvsln$ of the Lie algebra $\mathfrak{sl}_n$.

\begin{proposition} [{\cite[Proposition $2.7$]{GR26}\label{prop:comodule}}]
The algebra $MU(n)$ is a co-module algebra over $\uvsln$ via the co-module map:
\begin{align*}
\rho:MU(n)\to &\uvsln\otimes MU(n)\\
e_i\mapsto 1\otimes e_i+e_i\otimes k_i, &\qquad f_i\mapsto f_i\otimes 1 + k_i^{-1}\otimes f_i,\\
k_i^{\pm1} \mapsto k_i^{\pm1}\otimes k_i^{\pm1}, &\qquad \ell\mapsto 1\otimes \ell.
\end{align*}
\end{proposition}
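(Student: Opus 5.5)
Since $MU(n)$ is given by generators and relations (Definition~\ref{def:uvsln}), the plan is to define $\rho$ on the free algebra generated by $\ell, e_i, f_i, k_i^{\pm1}$ and show that it kills every defining relation \eqref{eq:usln.1}--\eqref{eq:mun.5}. This yields an algebra homomorphism $\rho:MU(n)\to\uvsln\otimes MU(n)$. Afterwards we check the comodule axioms $(\Delta\otimes 1)\circ\rho=(1\otimes\rho)\circ\rho$ and $(\varepsilon\otimes 1)\circ\rho=\mathrm{id}$, where $\Delta$ is the coproduct of $\uvsln$ with $\Delta(e_i)=1\otimes e_i+e_i\otimes k_i$, $\Delta(f_i)=f_i\otimes1+k_i^{-1}\otimes f_i$, $\Delta(k_i)=k_i\otimes k_i$.

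The first observation is that on the subalgebra $\uvsln\subset MU(n)$ the formulas for $\rho$ coincide with $\Delta$. Hence relations \eqref{eq:usln.1}--\eqref{eq:usln.6} are preserved for the same reason they are preserved by $\Delta$, namely that $\Delta$ is an algebra map. The coassociativity and counit checks are likewise immediate on $e_i,f_i,k_i^{\pm1}$ and trivial on $\ell$: both sides give $1\otimes1\otimes\ell$, and $\varepsilon(1)\ell=\ell$. Since both sides of each axiom are algebra maps, checking generators suffices.

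It remains to verify the mirabolic relations. Relations \eqref{eq:mun.1} and \eqref{eq:mun.2} are quick. We have $(1\otimes\ell)^2=1\otimes\ell$. The element $1\otimes\ell$ commutes with $k_i\otimes k_i$, and also with $1\otimes e_i+e_i\otimes k_i$ for $i\ge2$, since $\ell$ commutes with $e_i$ and $k_i$; the same holds for $f_i$. For \eqref{eq:mun.3} we compute
$$(1\otimes\ell)\rho(e_1)(1\otimes\ell)=1\otimes\ell e_1\ell+e_1\otimes\ell k_1\ell=1\otimes\ell e_1+e_1\otimes\ell k_1=(1\otimes\ell)\rho(e_1),$$
using $\ell e_1\ell=\ell e_1$ and $\ell^2=\ell$. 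The case of $f_1$ is analogous, using $\ell f_1\ell=f_1\ell$.

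The main obstacle is the Serre-type relations \eqref{eq:mun.4} and \eqref{eq:mun.5}. For \eqref{eq:mun.4}, expand $\rho(e_1)^2$ into the terms
$$1\otimes e_1^2,\quad e_1\otimes(e_1k_1+k_1e_1),\quad e_1^2\otimes k_1^2,$$
and sort everything by the degree in the first tensor factor.
\begin{itemize}
\item In degree $0$ the relation holds because it is \eqref{eq:mun.4} itself.
\item In degree $2$ we must check $(v+v^{-1})k_1\ell k_1=v^{-1}k_1^2\ell+v\ell k_1^2$, which holds because $\ell$ commutes with $k_1$.
\item In degree $1$, after commuting $k_1$ past $e_1$ using $k_1e_1=v^2e_1k_1$, the identity reduces to
$$(v+v^{-1})(e_1\ell k_1+ k_1\ell e_1)=v^{-1}(e_1k_1+k_1e_1)\ell+v\ell(e_1k_1+k_1e_1).$$
Each side is a combination of $e_1\ell k_1$ and $\ell e_1 k_1$ (moving $k_1$ to the right), and equality of the coefficients should follow from the $v$-powers.
\end{itemize}
I expect the degree-$1$ step to need $\ell e_1\ne e_1\ell$ to be handled carefully, so I would first write every term in the normal form $X k_1$ with $X\in\{e_1\ell,\ell e_1\}$ and compare coefficients. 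The $f_1$ relation \eqref{eq:mun.5} is handled symmetrically, with $k_1^{-1}$ appearing on the left.
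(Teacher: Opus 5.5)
Your proposal is correct. This paper does not prove the statement itself but quotes it from \cite{GR26}, so the comparison is with the natural argument, and your direct check that $\rho$ respects the defining relations, followed by verifying the comodule axioms on generators, is exactly that argument.

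The degree-one step you left hedged does close, and it needs no relation between $e_1\ell$ and $\ell e_1$. The images under $\rho$ of both sides of \eqref{eq:mun.4} have degree-one part $e_1\otimes\big((v+v^{-1})e_1\ell k_1+(v+v^3)\ell e_1k_1\big)$. For \eqref{eq:mun.5}, first rewrite $f_1k_1^{-1}=v^{-2}k_1^{-1}f_1$ in the first tensor factor. Then both sides have middle part $k_1^{-1}f_1\otimes\big((v^{-1}+v^{-3})\ell f_1+(v+v^{-1})f_1\ell\big)$.
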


All modules that we work with in this paper are assumed to be finite dimensional (over $K$) left modules. We recall from \cite[\S 2.2.1]{GR26} that all such modules for $MU(n)$ are \emph{weight modules}, and we will further restrict ourselves to type $\mathbf{1}$, which means that for any module $M$, we have a direct sum decomposition into mirabolic weight spaces
\[M=\bigoplus_{\lambda,\epsilon}M_{\lambda,\epsilon},\]
\[M_{\lambda,\epsilon}:=\{x\in M~|~\ell \cdot x=\epsilon x,~k_i\cdot x=v^{\lambda_i-\lambda_{i+1}}x,\quad\forall i=1,\ldots,n-1\}\]
where $\epsilon\in\{0,1\}$ (since $\ell$ is an idempotent) and $\lambda$ is a weight for $\uvsln$, that is $\lambda=(\lambda_1,\cdots,\lambda_n)\in \ZZ^n/\sim$ with the equivalence relation given by $(\lambda_1,\cdots,\lambda_n)\sim(\lambda_1+t,\cdots,\lambda_n+t)$ for all $t\in\ZZ$.

\begin{definition}[{See \cite[Def 2.13-Lem 2.16]{GR26}}]\label{def:W_r} For any $0\leq r\leq n$ we define the set $[n]_r:=\{I\subset\{1,\ldots,n\}~|~\#I=r\}$, and the module $W_r=K\mathrm{-span}\{u_I~|~I\in[n]_r\}$
with the action given by   
\[ k_i\cdot u_I=\begin{cases}v u_I & \text{ if }i\in I,~ i+1\not\in I,\\
v^{-1} u_I & \text{ if }i\not\in I,~ i+1\in I,\\
u_I & \text{ otherwise,}\end{cases}\quad e_i\cdot u_I=\begin{cases}u_{I\setminus\{i+1\}\cup\{i\}} & \text{ if }i+1\in I,~ i\not\in I,\\
0 & \text{ otherwise,}\end{cases}\]
\[ f_i\cdot u_I=\begin{cases}u_{I\setminus\{i\}\cup\{i+1\}} & \text{ if }i\in I,~ i+1\not\in I,\\
0 & \text{ otherwise,}\end{cases}\qquad \ell\cdot u_I=\begin{cases} u_I & \text{ if }1\not\in I,\\
0 & \text{ if }1\in I.   
\end{cases}\]
\end{definition}

Viewed as a module over the subalgebra $\uvsln\sub MU(n)$, the space $W_r$ is precisely the simple representation whose highest weight is the fundamental dominant integral weight $\omega_r:=(1,1,\dots, 1,0, 0, \dots, 0)=(1^r0^{n-r})$.

\begin{theorem}[{\cite[Thm 5.1, Thm 5.3]{GR26}}]
The category of finite dimensional $MU(n)$-representations is semisimple. A complete list of isomorphism classes of finite dimensional simple $MU(n)$ modules is given by $L_{(\lambda,1^r)}:=L_{\lambda}\otimes W_r$, where $0\leq r\leq n$, $\lambda$ is a dominant integral weight (i.e. $\lambda_1\geq\lambda_2\geq\cdots\geq\lambda_n$), and $L_\lambda$ is the unique simple module for $\uvsln$ with highest weight $\lambda$. The $MU(n)$-module structure comes from the comodule map $\rho$.\end{theorem}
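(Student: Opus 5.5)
We would prove the theorem in three steps: show the modules $L_\lambda\otimes W_r$ are simple, show every simple module is of this form, and show every finite dimensional module is semisimple. Throughout, $\ell$ is an idempotent commuting with all $k_i$. By \eqref{eq:mun.2} it commutes with $e_i,f_i$ for $i\ge 2$, and \eqref{eq:mun.3}--\eqref{eq:mun.5} control how it interacts with $e_1,f_1$.

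\textbf{Step 1: the modules $L_{(\lambda,1^r)}$ are well defined and simple.} By Proposition~\ref{prop:comodule}, $L_\lambda\otimes W_r$ is an $MU(n)$-module. As a $\uvsln$-module it is $L_\lambda\otimes L_{\omega_r}$, which is multiplicity free by the Pieri rule. Its constituents are the $L_{\lambda+\epsilon_I}$, where $I\in[n]_r$ and $\lambda+\epsilon_I$ is dominant. The operator $\ell$ acts as $1\otimes\ell$, so it preserves weight spaces and mixes the $\uvsln$-constituents. To prove simplicity, take a nonzero submodule $N$. It is a sum of some Pieri constituents. We would show that $\ell$ connects them all. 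Concretely, for the highest weight vector $x_I$ of each constituent, compute the projection of $\ell x_I$ onto the other constituents. This uses that $\ell u_J=u_J$ exactly when $1\notin J$, together with explicit expansions of $x_I$ in the tensor basis. The connectivity of this ``Pieri graph'' under $\ell$ is the combinatorial heart of the argument.

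\textbf{Step 2: every simple module is of this form.} Let $M$ be a simple module, which is finite dimensional and of type $\mathbf{1}$. Restrict to $\uvsln\times\langle\ell\rangle$ and look at highest weight vectors. Set $M^+=\{x: e_ix=0 \ \forall i\}$. Relation \eqref{eq:mun.3}, $\ell e_1\ell=\ell e_1$, shows that $\ell M^+$ is again killed by $e_1$ on the appropriate part, and \eqref{eq:mun.4} constrains $(1-\ell)$. From these we extract a ``mirabolic highest weight'' $(\mu,\epsilon)$. We then decompose $M$ under the subalgebra generated by $\uvsln$ and $\ell$. The key invariant is the set of weights on which $\ell$ acts by $0$ versus $1$. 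Comparing characters together with the $\ell$-eigenvalue data against Step 1 should force $M\cong L_\lambda\otimes W_r$, where $r$ is read off from how $\ell$ acts on the highest $\uvsln$-constituent.

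\textbf{Step 3: semisimplicity, which we expect to be the main obstacle.} Given an extension $0\to L\to E\to L'\to 0$, it splits over $\uvsln$ by Weyl's theorem (type $\mathbf 1$, generic $v$). So it suffices to find a $\uvsln$-splitting that also commutes with $\ell$. The plan is to average $\ell$ against the Casimir-type central elements of $\uvsln$: we would show that $\ell$ commutes with a suitable central element separating Pieri constituents. Alternatively, we would realize $MU(n)$ as a quotient of a convolution algebra of invariant functions, which is semisimple because it is the specialization of a generically semisimple Schur-type algebra. We would try the convolution/Schur-algebra route first, exhausting simples via mirabolic tensor spaces, since an algebraic splitting argument for $\ell$ seems delicate.
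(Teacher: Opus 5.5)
The paper does not prove this statement; it is imported from \cite{GR26}, so your proposal has to stand on its own. As written it does not: each step names its key claim and defers it, and neither concrete mechanism you offer for semisimplicity works.

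\textbf{Step 3 (semisimplicity).} The Casimir route is self-defeating. Suppose a central element $C$ of $\uvsln$ took distinct values on the Pieri constituents of $L_\lambda\otimes W_r$ and commuted with $\ell$. Then $\ell$ would preserve each constituent, each constituent would be an $MU(n)$-submodule, and Step~1 would be false. Concretely, take $n=2$ and $m\geq 1$, so that $L_{(m,0)}\otimes W_1\cong L_{(m+1,0)}\oplus L_{(m,1)}$ over $U_v(\sll_2)$. Let $w$ be the highest weight vector of $L_{(m,0)}$. The highest weight vector of $L_{(m,1)}$ is $x=f_1w\otimes u_{\{1\}}+c\,w\otimes u_{\{2\}}$ with $c\neq 0$, and $\ell x=c\,w\otimes u_{\{2\}}\notin L_{(m,1)}$.

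The convolution route has the arrow backwards: the maps go $MU(n)\twoheadrightarrow MU(n,d)$. Although $MU(n,d)=\End_{MH_d}(MV_{n,d})$ is semisimple, this says nothing about an arbitrary finite dimensional $MU(n)$-module. You would first have to show that every such module factors through a finite product of these quotients, i.e.\ satisfies the extra $d$-dependent relations of $MU(n,d)$. That is at least as hard as what you are trying to prove, and nothing in the paper gives a presentation of $MU(n,d)$ with which to check it. In this paper the logic in fact runs the other way: the classification, through Proposition~\ref{prop:weightspace}, is an input to the analysis of $MV_{n,d}$.

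\textbf{Step 2 (exhausting the simples).} This step has no mechanism either. For an unknown simple $M$ you do not know $\chi(M)$, so there is nothing to compare. The subalgebra generated by $\uvsln$ and $\ell$ is all of $MU(n)$, so decomposing under it is vacuous. The relation $\ell e_1\ell=\ell e_1$ only gives $\ell e_1(\ell x)=0$ for $x\in M^+$, not $e_1(\ell x)=0$; in the example above, $e_1\ell x=c\,w\otimes u_{\{1\}}\neq 0$.

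Nor do the highest weight and the $\ell$-eigenvalue on the highest weight vector pin down the module. Both $L_{(m,0)}\otimes W_1$ and $L_{(m+1,0)}\otimes W_2$ have highest weight $(m+1,0)\sim(m+2,1)$, and $\ell$ kills both highest weight vectors. What separates them is the depth of Example~\ref{ex:(co-)depth}. Even with depth in hand, you need an existence argument producing a nonzero map between $M$ and some $L_\lambda\otimes W_r$. One option is a universal highest-weight-type object. Another is the adjunction $\operatorname{Hom}_{MU(n)}(L_\lambda\otimes W_r,M)\cong\operatorname{Hom}_{MU(n)}(W_r,L_\lambda^*\otimes M)$ afforded by the comodule structure, together with a proof that $L_\mu\otimes M$ contains a copy of some $W_r$ for a suitable dominant $\mu$.

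\textbf{Step 1 (simplicity).} The statement that no proper union of Pieri constituents is $\ell$-stable is the entire content of simplicity, and it is left undone. It needs the highest weight vectors of all constituents of $L_\lambda\otimes W_r$ for general $\lambda$ and $r$. You also do not address pairwise non-isomorphism of the $L_{(\lambda,1^r)}$, which by the example above also requires an invariant such as depth.
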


Notice that $L_{(\lambda,1^r)}$ was denoted by $L_{\lambda,r}$ in \cite{GR26}.

\begin{remark}\label{rem:pi0-pi1}
There are two surjective maps $\pi_0,\pi_1:MU(n)\to\uvsln$, where the generators $k_i^{\pm 1},e_i,f_i$ map to the ones with the same name, and $\pi_\epsilon(\ell)=\epsilon$, for $\epsilon=0,1$. Then, we have that, for a simple highest weight $\uvsln$-module $L_\lambda$, $\pi_0^*(L_\lambda)\simeq L_{(\lambda,1^n)}$ and $\pi_1^*(L_\lambda)\simeq L_{(\lambda,\emptyset)}$.
\end{remark}

\begin{proposition}[{\cite[Prop 5.4]{GR26}}] \label{prop:weightspace}
Let $M$ be a finite dimensional $MU(n)$-module. Then its decomposition as a direct sum of simples is uniquely determined by the dimensions of its mirabolic weight spaces $M_{\lambda,\epsilon}$.
\end{proposition}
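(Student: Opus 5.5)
The plan is to show that the mirabolic weight-space dimensions of $M$ determine the multiplicity of each simple $L_{(\lambda,1^r)}$. The argument is an upper-triangular peeling, using the characters of the simples.

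\textbf{Step 1: characters of the simples.} For each simple $L_{(\lambda,1^r)}=L_\lambda\otimes W_r$, compute the $\ell$-graded character. By Proposition~\ref{prop:comodule}, $\ell$ acts as $1\otimes\ell$, so
\[(L_{(\lambda,1^r)})_{\mu,\epsilon}=\bigoplus_{\nu+\omega_I=\mu} (L_\lambda)_\nu\otimes K u_I,\]
where the sum runs over $I\in[n]_r$ with $1\notin I$ if $\epsilon=1$ and $1\in I$ if $\epsilon=0$, and $\omega_I$ denotes the weight of $u_I$. Recording the pair of characters $(\mathrm{ch}_0,\mathrm{ch}_1)$, we get
\[\mathrm{ch}_1=\mathrm{ch}(L_\lambda)\cdot\sum_{1\notin I}x^{\omega_I},\qquad \mathrm{ch}_0=\mathrm{ch}(L_\lambda)\cdot\sum_{1\in I}x^{\omega_I}.\]

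\textbf{Step 2: linear independence.} Since the category is semisimple, $M\cong\bigoplus m_{\lambda,r}L_{(\lambda,1^r)}$. It therefore suffices to show that the pairs of characters $(\mathrm{ch}_0,\mathrm{ch}_1)$ of the simples are linearly independent. I would use a leading-term argument. Order weights by dominance and take the highest weight appearing in $\mathrm{ch}_0$ or $\mathrm{ch}_1$:
\begin{itemize}
\item For $1\le r\le n-1$, the top term of $\mathrm{ch}_0$ is $x^{\lambda+\omega_r}$, coming from $I=\{1,\dots,r\}$. The top term of $\mathrm{ch}_1$ is $x^{\lambda+\omega_{\{2,\dots,r+1\}}}$, which equals $x^{\lambda+\omega_r-\alpha_1-\cdots-\alpha_r}$.
\item For $r=0$, $\mathrm{ch}_0=0$ and $\mathrm{ch}_1=\mathrm{ch}(L_\lambda)$.
\item For $r=n$, $\mathrm{ch}_1=0$ and $\mathrm{ch}_0=\mathrm{ch}(L_\lambda)$ shifted by $\omega_n\sim 0$.
\end{itemize}

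I would then proceed inductively. Given $M$, take a weight $\mu$ maximal among those with $M_{\mu,0}\oplus M_{\mu,1}\neq0$. A simple contributing a vector there must be $L_{(\lambda,1^r)}$ with $\lambda+\omega_r=\mu$ contributing to $\epsilon=0$, or $L_{(\mu,\emptyset)}$ contributing to $\epsilon=1$. In the $\epsilon=1$ case, note that $\mu$ is a top weight of $L_{(\lambda,1^r)}$ in $\mathrm{ch}_1$ only when $r=0$, because for $r\ge1$ the $\ell=1$ part sits strictly below the $\ell=0$ top.

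The difficulty is that several pairs $(\lambda,r)$ with $\lambda+\omega_r=\mu$ collide in the $\epsilon=0$ space, since $\lambda$ is only defined modulo shifts. To separate them I would exploit $\mathrm{ch}_1$: the leading term of $\mathrm{ch}_1$ of $L_{(\lambda,1^r)}$ is $\mu-\alpha_1-\dots-\alpha_r$, and these are distinct for distinct $r$. So I would set up the multiplicities as the solution of a triangular system, indexed by pairs (weight, $\epsilon$) with a suitable refined order.

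An alternative, cleaner approach is to use the restrictions. First, the $\uvsln$-character of $M$ determines the $\uvsln$-decomposition. The $\ell=1$ subspace is a module for the subalgebra generated by $\ell$, the $k_i$, and $e_i,f_i$ for $i\ge2$ (a copy of $U_v(\mathfrak{gl}_{n-1})$-like data), and its character is determined by the $\mathrm{ch}_1$ data. By Pieri's rule the $\uvsln$-restriction of $L_{(\lambda,1^r)}$ is $\bigoplus L_{\lambda+\omega_I}$ over vertical strips. The $\ell$-grading then picks out the strips with $1\notin I$, which is the Pieri rule for $\mathfrak{gl}_{n-1}$ acting on $\lambda$ with $\lambda_1$ fixed. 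Comparing highest terms in this two-step branching yields triangularity.

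\textbf{Main obstacle.} The hard part is making the triangular order precise, in particular disentangling $r$ from shifts of $\lambda$, given that $\omega_n\sim0$. I would first try an order that compares $(\mu,\epsilon)$ lexicographically with the $\epsilon=1$ components weighted by $\alpha_1$-depth, and verify on $n=2$ using \cite{R15}.
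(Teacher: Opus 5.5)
Your Steps 1 and 2 are sound. By semisimplicity, the statement is equivalent to the linear independence of the pairs $(\mathrm{ch}_0,\mathrm{ch}_1)$ attached to the simples, and your formulas for these pairs are correct. The paper gives no proof of this statement, only citing \cite[Prop 5.4]{GR26}, so there is no in-text proof to compare against.

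There is a genuine gap: that independence is the entire content, and you do not establish it. The top-down peeling you sketch does not close. To split $\dim M_{\mu,0}=\sum_{r\ge1}m_{\mu-\omega_r,r}$, you propose reading off the $\ell=1$ spaces at $\mu-\alpha_1-\cdots-\alpha_r$. But those spaces also receive contributions from summands whose highest weight is strictly below $\mu$ or incomparable to it, and a top-down induction has not yet determined these. Already for $n=2$ with $\mu_1-\mu_2\ge2$, the space $M_{\mu-\alpha_1,1}$ receives contributions from $L_{(\mu,\emptyset)}$, from $L_{(\mu-\omega_1,1)}$, and from the not-yet-peeled $L_{(\mu-\alpha_1,\emptyset)}$. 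You give no order that makes the system triangular.

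The Pieri variant has a further flaw. The $\ell$-eigenspaces are not $\uvsln$-submodules, so the $\ell$-grading does not select Pieri summands. For example, for $n=2$ the $\ell=1$ line of $W_1$ is just the lowest weight vector of the irreducible $W_1$.

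The missing idea is to group the simples by $r$ rather than by highest weight. Write $M\cong\bigoplus_{r=0}^nA_r\otimes W_r$, where $A_r=\bigoplus_\lambda L_\lambda^{\oplus m_{\lambda,r}}$ is a $\uvsln$-module. Your Step 1 then gives
\[\mathrm{ch}_1(M)=\sum_{r=0}^{n-1}\chi(A_r)\,e_r(x_2,\dots,x_n),\qquad x_1^{-1}\mathrm{ch}_0(M)=\sum_{r=1}^{n}\chi(A_r)\,e_{r-1}(x_2,\dots,x_n).\]
Here $e_r(x_2,\dots,x_n)$ denotes the elementary symmetric polynomial, and each $\chi(A_r)$ is $\mathcal{S}_n$-invariant.

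Since $e_k(x_2,\dots,x_n)=\sum_{j=0}^k(-x_1)^j e_{k-j}(x_1,\dots,x_n)$, the family $e_0(x_2,\dots,x_n),\dots,e_{n-1}(x_2,\dots,x_n)$ is triangular, with diagonal entries $\pm1$, with respect to $1,x_1,\dots,x_1^{n-1}$ over the $\mathcal{S}_n$-invariants. These powers are linearly independent over the $\mathcal{S}_n$-invariant subfield of the fraction field of the weight ring, because $x_1$ has $n$ distinct conjugates $x_1,\dots,x_n$ and so has degree $n$ over that subfield.

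It follows that $\mathrm{ch}_1(M)$ determines $\chi(A_0),\dots,\chi(A_{n-1})$, and $\mathrm{ch}_0(M)$ determines $\chi(A_n)$. Since a finite dimensional $\uvsln$-module is determined by its character, every $m_{\lambda,r}$ is determined. This removes the collision between $r$ and shifts of $\lambda$ altogether, with no triangular order needed.
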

\begin{definition}
Let $M$ be a module for $MU(n)$, and let $x\in M$ be a weight vector. For $0\leq r\leq n$, we say that $x$ has \emph{depth} $r$ if 
$$\ell f_1\cdots f_j \cdot x= 0\qquad \text{ for all }0\leq j<r,$$ 
and
$$\ell f_1f_2\cdots f_r\cdot  x\neq 0 \quad \text{ (this condition doesn't apply when $r=n$).}$$
Analogously, we say that $x$ has \emph{co-depth} $r$ if 
$$(1-\ell) e_1\cdots e_j \cdot x= 0\qquad \text{ for all }0\leq j<r,$$ 
and
$$(1-\ell) e_1e_2\cdots e_r\cdot  x\neq 0 \quad \text{ (this condition doesn't apply when $r=n$).}$$
If $M\neq 0$, we define the (co-)depth of $M$ to be the maximum of the (co-)depth of $x$, where $x$ varies among all nonzero weight vectors in $M$.
\end{definition}
\begin{remark}
The $0$ vector in any module has depth $n$ and co-depth $n$.
\end{remark}
\begin{example}\label{ex:(co-)depth} 
Let $\lambda$ be a dominant weight and $0\leq r\leq n$. Pick any weight vector $y\in L_{\lambda}$ and $I\in [n]_r$. Then, using the action of $MU(n)$ on $W_r$ from Definition \ref{def:W_r} and the comodule map $\rho$ from Proposition \ref{prop:comodule}, it is an easy computation to check that the vector $y\otimes u_I$ is a weight vector of $L_{(\lambda,1^r)}$ whose depth is $s$, where $s$ is the maximum number such that $\{1,2,\dots, s\}\subset I$, and whose co-depth is $s'$, where $s'$ the maximum number such that $\{1,2,\dots, s'\}\cap I=\emptyset$. In particular, the module $L_{(\lambda,1^r)}$ has depth $r$ and co-depth $n-r$.
\end{example}

The following lemma provides the relationship between depth and co-depth.

\begin{lemma} \label{lem:oldepth}
Let $M$ be any finite dimensional $MU(n)$-module that is a direct sum of simple modules of depth $r$.
\begin{enumerate}
\item We have the equality:
\[\textrm{depth}(M)+{\textrm{co-depth}}(M)=n.\]
\item Define $M'\sub M$ (resp. $\ol{M'}\sub M$) to be the subspace spanned by all weight vectors of depth $r$ (resp. co-depth $n-r$). Then, there is a linear isomorphism:
\[\psi:\ol{M'}\to M'\]
such that for any $w\in \ol{M'}$ with weight $\mu$, the vector $\psi(w)$ has weight $\mu+\omega_r-\omega_r'$, where $\omega_r'$ is the weight $(0,0,\dots, 0, 1, 1,\dots 1)=(0^{n-r}1^r)$. Furthermore, there exists an element $m\in MU(n)$ (depending upon $w$) such that $\psi(w)=m\cdot w$.
\end{enumerate}
\end{lemma}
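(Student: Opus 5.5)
Since the category of finite dimensional $MU(n)$-modules is semisimple, $M\simeq\bigoplus_k L_{(\lambda^{(k)},1^r)}$ with the same $r$ for every summand. The plan is to reduce both parts to a single simple summand $L=L_{\lambda}\otimes W_r$ and then read everything off from the explicit basis of Example~\ref{ex:(co-)depth}.

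\textbf{Step 1: reduction to simples.} For a weight vector $x=\sum_k x_k$ with $x_k$ in the $k$-th summand, each condition $\ell f_1\cdots f_j x=0$ holds if and only if it holds for every component $x_k$. Hence the depth of $x$ is the minimum of the depths of the nonzero $x_k$, and the same is true for co-depth. It follows that the depth (resp. co-depth) of $M$ is the maximum over the summands. The set of weight vectors of depth $r$, together with $0$, is closed under componentwise decomposition. So $M'=\bigoplus_k L'_k$ and $\ol{M'}=\bigoplus_k\ol{L'_k}$, and it suffices to build $\psi$ summand by summand, compatibly with weights.

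\textbf{Step 2: a single simple $L=L_\lambda\otimes W_r$.} By Example~\ref{ex:(co-)depth}, $L$ has depth $r$ and co-depth $n-r$, which gives part (1). For part (2) I need to identify $L'$ and $\ol{L'}$.

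1. Write a weight vector of weight $\mu$ as $x=\sum_I y_I\otimes u_I$ with $y_I\in L_\lambda$ of weight $\mu-\omega_I$. Here $\omega_I$ is the indicator vector of $I$.
2. Use $\rho$ to compute $\ell f_1\cdots f_j x$. Each $f_i$ acts as $f_i\otimes 1+k_i^{-1}\otimes f_i$ and $\ell$ acts as $1\otimes\ell$. Tracking which terms can survive $\ell$ (they need $1\notin I'$) by a triangularity or induction argument on $j$, I expect to show that $\ell f_1\cdots f_jx=0$ for all $j<r$ forces $y_I=0$ unless $I\supseteq\{1,\dots,r\}$.
3. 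For $\#I=r$ this means $I=\{1,\dots,r\}$. Hence $L'=L_\lambda\otimes u_{\{1,\dots,r\}}$.
4. The dual computation with $(1-\ell)e_1\cdots e_j$ gives $\ol{L'}=L_\lambda\otimes u_{\{r+1,\dots,n\}}$.

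\textbf{Step 3: defining $\psi$.} Set $\psi(y\otimes u_{\{r+1,\dots,n\}})=y\otimes u_{\{1,\dots,r\}}$. This is a linear isomorphism and it shifts weight by $\omega_r-\omega'_r$. To realize $\psi$ by an element of $MU(n)$, I would try a product of the form $m=\prod \ell e_{i}$ that moves the indices of $u_I$ down one step at a time, for example $m=e_{\text{string}}$ composed with projections via $\ell$. The difficulty is that $\rho(e_i)=1\otimes e_i+e_i\otimes k_i$ also acts on the $L_\lambda$ factor. So $m\cdot(y\otimes u)$ equals $y\otimes u'$ plus terms of the form $(\text{higher weight})\otimes(\text{other }u_I)$.

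\textbf{Main obstacle: the correction terms.} The hardest step is showing that these extra terms can be removed, or that some $m$, possibly depending on $w$, hits exactly $\psi(w)$. The approach I would try first uses that $L$ is simple. By the Jacobson density theorem, $MU(n)$ surjects onto $\End_K(L)$, so for any single $w$ there is some $m$ with $m\cdot w=\psi(w)$. This is exactly why the statement allows $m$ to depend on $w$.

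In $M$, however, $w=\sum_k w_k$ has components in several possibly isomorphic summands, so I need one $m$ acting correctly on all of them simultaneously. The plan is to use density for the semisimple module $M$ itself: $MU(n)$ maps onto $\End_{\End_{MU(n)}(M)}(M)$. It then suffices to choose the summand-wise isomorphisms $\psi_k$ to be uniform across isomorphic copies, namely transported via fixed isomorphisms $L_{(\lambda,1^r)}\simeq$ the model $L_\lambda\otimes W_r$. With that choice, $\psi$ commutes with $\End_{MU(n)}(M)$, so $\psi$ lies in the bicommutant and $\psi(w)=m\cdot w$ for some $m$. In fact a single $m$ then works for all $w$.
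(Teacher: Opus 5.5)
Your proposal is correct and follows the paper's route. The paper also reduces to a simple $L_\lambda\otimes W_r$, identifies $M'=L_\lambda\otimes u_{\{1,\dots,r\}}$, defines $\psi$ by $y\otimes u_{\{n-r+1,\dots,n\}}\mapsto y\otimes u_{\{1,\dots,r\}}$, and gets $m$ from simplicity. For a single $w$ in a simple module, density gives you nothing beyond the fact that every nonzero vector is cyclic, which is what the paper uses.

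\textbf{Indexing slip.} Fix this in Steps 2.4 and 3. The co-depth computation forces $I\cap\{1,\dots,n-r\}=\emptyset$, so $\ol{L'}=L_\lambda\otimes u_{\{n-r+1,\dots,n\}}$, not $L_\lambda\otimes u_{\{r+1,\dots,n\}}$. The set $\{r+1,\dots,n\}$ has $n-r$ elements, so that vector does not lie in $W_r$ unless $n=2r$. The weight shift $\omega_r-\omega_r'$ you claim is correct only with $\{n-r+1,\dots,n\}$.

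\textbf{Non-simple $M$.} Your treatment here is more careful than the paper's remark that it suffices to treat simples. With isomorphic summands, $\psi$ really must be chosen uniformly: on $L\oplus L$, the map $\psi_L\oplus 2\psi_L$ violates the last clause at $w=(x,x)$. For your bicommutant step you should add that $\End_{MU(n)}(L_{(\lambda,1^r)})=K$. This holds because the weight space of weight $\lambda+\omega_r$ is one-dimensional. It needs to be said because $K=\CC(v)$ is not algebraically closed, and without it a uniformly transported $\psi$ need not commute with $\End_{MU(n)}(M)$.

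\textbf{Step 2.2.} Your sketch does go through, by induction on $j$. For $j=0$, $\ell$ fixes $u_I$ if $1\notin I$ and kills it otherwise. For $j\geq 1$, suppose every $I$ with $y_I\neq 0$ contains $\{1,\dots,j\}$. Then the only term of $\rho(\ell f_1\cdots f_j)$ that survives on $y_I\otimes u_I$ is $k_1^{-1}\cdots k_j^{-1}\otimes \ell f_1\cdots f_j$. This term sends $u_I$ injectively to $u_{(I\setminus\{1\})\cup\{j+1\}}$ when $j+1\notin I$, and to $0$ otherwise. Induction therefore gives $I\supseteq\{1,\dots,r\}$, as you expected.
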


\begin{proof}
It suffices to prove the lemma when $M=L_{(\lambda,1^r)}$ is a simple $MU(n)$-module. The first assertion follows from that fact that $\textrm{depth}(L_{(\lambda,1^r)})=r$ and $\textrm{co-depth}(L_{(\lambda,1^r)})=n-r$, as explained in Example \ref{ex:(co-)depth}.

Let $I=\{1,2,\dots,r\}$ and $\ol{I}=\{n-r+1,n-r+2,\dots, n\}$. Then, it is clear that $M'=L_{\lambda}\otimes\{u_I\}$ and $\ol{M'}=L_{\lambda}\otimes\{u_{\ol{I}}\}$. Then, the required linear map $\psi$ is given by:
\[w\otimes u_{\ol I} \mapsto w\otimes u_{I}\]
for any $w\in L_{\lambda}$.
Furthermore, since $M$ is a simple $MU(n)$-module, there exists an $m\in MU(n)$ (depending upon $w$) such that $m\cdot(w\otimes u_{\ol I}) = w\otimes u_{I}$.
\end{proof}

\begin{remark}
Part$(1)$ of Lemma \ref{lem:oldepth} is not true for arbitrary $MU(n)$-modules. For example, if $M=L_{(\lambda,1^0)}\oplus L_{(\lambda', 1^n)}$, we have $\textrm{depth}(M)=\textrm{co-depth}(M)=n$.
\end{remark}

\subsection{Mirabolic Hecke algebra}
We consider the mirabolic Hecke algebra $MH_d$, in the presentation given in \cite{R14}.

\begin{definition}The \emph{mirabolic Hecke algebra} $MH_d$ is defined as the unital $K$-algebra generated by elements $e,\tau_1,\ldots, \tau_{d-1}$ satisfying the following relations:
\begin{align} \label{nrel1} \tau_i^2 & = (v^2-1)\tau_i+v^2 \\
\label{nrel2} \tau_i\tau_{i+1}\tau_i& =\tau_{i+1}\tau_i\tau_{i+1}\\
\label{nrel3} \tau_i\tau_j&=\tau_j\tau_i & |i-j|\geq 2 \\
\label{nrel4}e^2&=e & \\
\label{nrel5}e\tau_i&=\tau_ie & i\geq 2 \\
\label{nrel6}e\tau_1e\tau_1&=\tau_1e\tau_1e & \\
\label{nrel7}e\tau_1e\tau_1& =\tau_1e\tau_1-e\tau_1e+\tau_1e+e\tau_1+e-\tau_1-1. &
\end{align}
\end{definition}
In comparison with the presentation used in \cite{FZM} (where the above algebra is denoted by $\mathbf{MH}$), we have $e=v^{-2}(\tau_0+1)$.
The Iwahori-Hecke algebra $H_d$ of the symmetric group $\mathcal{S}_d$ is isomorphic to the $K$-algebra generated by the elements $\tau_1,\ldots, \tau_{d-1}$ and satisfying the relations~\eqref{nrel1},~\eqref{nrel2} and~\eqref{nrel3}. Hence, it can be viewed as a subalgebra of $MH_d$.

If $d\in\mathbb{Z}_{\geq 0}$, the $n$-tuple $\lambda=(\lambda_1,\ldots,\lambda_n)$ with $\lambda_i\in\mathbb{Z}_{\geq 0}$ is a \emph{partition} of $d$ if $\lambda_1\geq\lambda_2\geq\cdots\geq\lambda_n$ and $\sum_i\lambda_i=d$. We write $|\lambda|=d$ or $\lambda\vdash d$. We denote repeated parts by $a^b=(a,\ldots,a)$ where $a$ appears $b$ times, and we will denote the partition of $0$ by $\emptyset$. A \emph{bipartition} of $d$ is a pair of partitions $\bs{\lambda}=(\lambda^{(1)},\lambda^{(2)})$ such that $|\lambda^{(1)}|+|\lambda^{(2)}|=d$. The set of partitions of $d$ parametrizes the irreducible representations of the Iwahori-Hecke algebra $H_d$.

Next, we recall some facts about the representation theory of $MH_d$.

\begin{theorem} [{\cite[Propostion 4.11, Theorem 4.12]{R14}}] \label{thm:MHd-reps}
The category of finite dimensional representations of the algebra $MH_d$ is semisimple and its irreducible representations, denoted by $M^{\bs{\lambda}}$, are parametrized by bipartitions $\bs{\lambda}=(\lambda^{(1)},\lambda^{(2)})$ such that $|\lambda^{(1)}|+|\lambda^{(2)}| = d$, and $\lambda^{(2)}=1^s$ for some $s\geq 0$. 
\end{theorem}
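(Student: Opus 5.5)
The plan is to realize $MH_d$ as a quotient of a semisimple cyclotomic Hecke algebra and read off its simple modules from the known classification for that algebra. By the comparison with \cite{FZM}, setting $\tau_0:=v^2e-1$ turns \eqref{nrel4} into the quadratic relation $(\tau_0+1)(\tau_0-(v^2-1))=0$. Relations \eqref{nrel5} and \eqref{nrel6} then become the commutation relation $\tau_0\tau_i=\tau_i\tau_0$ for $i\geq 2$ and the type $B$ braid relation $\tau_0\tau_1\tau_0\tau_1=\tau_1\tau_0\tau_1\tau_0$. This produces a surjection from the Ariki--Koike algebra $H_{2,d}(Q_1,Q_2)$ of type $B_d$, with parameters $Q_1=-1$ and $Q_2=v^2-1$, onto $MH_d$. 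Its kernel is the two-sided ideal generated by the image of the extra relation \eqref{nrel7}. Over $K=\CC(v)$ the parameters are generic, so Ariki's semisimplicity criterion applies: we have $Q_1\neq v^{2k}Q_2$ for all $|k|<d$, and $v^2$ is not a root of unity. Hence $H_{2,d}$ is semisimple, with simples $S^{\bs\lambda}$ indexed by all bipartitions of $d$. Every quotient of a semisimple algebra is semisimple, and its simples are exactly those $S^{\bs\lambda}$ annihilated by the kernel. This already gives semisimplicity of finite dimensional $MH_d$-modules.

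It remains to decide which $S^{\bs\lambda}$ are killed by \eqref{nrel7}. Since the relation is two-sided and involves only $e$ and $\tau_1$, it suffices to restrict to the subalgebra generated by $\tau_0$ and $\tau_1$, which is a Hecke algebra of type $B_2$. I would rewrite \eqref{nrel7} in terms of $\tau_0$ and $\tau_1$ and evaluate it on the five simple $B_2$-modules, indexed by bipartitions of $2$. The expected outcome is that the relation holds on every simple except the one labelled $(\emptyset,(2))$, up to the choice of which component carries the eigenvalue of $\tau_0$. Concretely, on that excluded module $e$ acts by $1$ and $\tau_1$ acts by $v^2$, and the right-hand side minus the left-hand side of \eqref{nrel7} is nonzero there. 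This can be checked directly on the one-dimensional modules using the seminormal (Hoefsmit) matrices for the two-dimensional ones.

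Then I would use the branching rule for Ariki--Koike algebras: the restriction of $S^{\bs\lambda}$ to the type-$B_2$ subalgebra generated by $\tau_0,\tau_1$ contains the simple labelled by $(\emptyset,(2))$ exactly when $\lambda^{(2)}$ contains two boxes in a single row. This fits with the Jucys--Murphy description $L_1=\tau_0$, $L_{k+1}=v^{-2}\tau_k L_k\tau_k$, whose joint eigenvalues on the seminormal basis are the contents of standard bitableaux. Consequently $S^{\bs\lambda}$ factors through $MH_d$ if and only if $\lambda^{(2)}$ has no row of length at least $2$, that is, $\lambda^{(2)}=1^s$. Labelling these modules $M^{\bs\lambda}$ gives the statement.

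The main obstacle is the $B_2$ computation. One must identify correctly which bipartition of $2$ survives as the kernel of \eqref{nrel7}, since this fixes the convention of which component is forced to be a column. It also has to be consistent with the normalisation $e=v^{-2}(\tau_0+1)$ that relates the two presentations. Once that check is done, the branching argument is standard.
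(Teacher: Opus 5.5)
\textbf{There is a genuine gap at the first step.} Relation \eqref{nrel6} does not become the type $B$ braid relation for $\tau_0=v^2e-1$. Substituting $e=v^{-2}(\tau_0+1)$ into \eqref{nrel6} and using \eqref{nrel1} gives
\[
\tau_0\tau_1\tau_0\tau_1-\tau_1\tau_0\tau_1\tau_0=(v^2-1)(\tau_1\tau_0-\tau_0\tau_1)=v^2(v^2-1)(\tau_1e-e\tau_1).
\]
More generally, for $a\neq 0$, $(ae+b)\tau_1(ae+b)\tau_1-\tau_1(ae+b)\tau_1(ae+b)=ab(v^2-1)(e\tau_1-\tau_1e)$. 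Moreover, $e$ does not commute with $\tau_1$. On $MV_{1,2}$, the formulas of Section~\ref{sec:action-formulae} give $m_{(1,1),\{1\}}\cdot\tau_1e=m_{(1,1),\{2\}}$, whereas $m_{(1,1),\{1\}}\cdot e\tau_1=(v^2-1)m_{(1,1),\emptyset}+(1-v^{-2})m_{(1,1),\{2\}}$. So there is no homomorphism $H_{2,d}(-1,v^2-1)\to MH_d$ with $T_0\mapsto\tau_0$, and the semisimplicity you derive from Ariki's criterion at generic parameters has no basis.

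Your instinct that $MH_d$ is an Ariki--Koike quotient is right, but \eqref{nrel6} says that $e$ itself satisfies the braid relation. The correct cover therefore has $T_0\mapsto e$ and parameters $Q_1=1$, $Q_2=0$, which matches Theorem~\ref{thm:JM_action}: $\LL_i$ has eigenvalue $0$ on every entry of $U^2$. These parameters are not generic:
\begin{itemize}
\item $T_0$ is not invertible.
\item The Jucys--Murphy elements no longer separate standard bitableaux. For example, shapes $(\emptyset,(2))$ and $(\emptyset,(1,1))$ have the same spectrum, and the seminormal coefficients become $0/0$.
\end{itemize}
Consequently neither the seminormal-form proof of Ariki's criterion nor the content description in your last paragraph can simply be invoked, and semisimplicity needs its own argument. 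One way is to construct the $\binom ds f^\lambda$-dimensional modules for $\lambda^{(2)}=1^s$ directly; there the spectra do separate bitableaux, since a column is determined by its set of entries. Then show these modules are simple and pairwise non-isomorphic, and compare $\sum_s\binom ds^2(d-s)!$ with $\dim MH_d$, the number of decorated permutation matrices. The paper itself gives no proof here; it quotes \cite{R14}.

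\textbf{Your $B_2$ check also identifies the wrong excluded module.} Take a one-dimensional module with $e\mapsto\epsilon\in\{0,1\}$ and $\tau_1\mapsto t\in\{v^2,-1\}$. The difference of the two sides of \eqref{nrel7} is then $(\epsilon-1)(t+1)$. So the only excluded one-dimensional module has $e=0$ and $\tau_1=v^2$, which is $(\emptyset,(2))$ in the paper's convention. The module you name, with $e=1$ and $\tau_1=v^2$, is $p^*$ of the trivial $H_d$-module, and by Remark~\ref{rem:p-proj} it is an $MH_d$-module. Reducing to a $B_2$ computation plus branching is a reasonable plan, but only after the correct cover with parameters $(1,0)$ is in place and semisimplicity has been established by other means.
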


\begin{remark}\label{rem:p-proj}
\begin{enumerate}
\item There is surjective algebra map $p:MH_d\to H_d$, with $p(e)=1$ and $p(\tau_i)=\tau_i$. If $M^{\lambda}$ is the simple $H_d$-representation corresponding to a partition $\lambda\vdash d$, we have:
\[M^{(\lambda,\emptyset)}=p^*(M^{\lambda}).\]
\item There exists an anti-automorphism $MH_d\to MH_d$ that is induced by sending each generator to itself. As a result, there is an equivalence between the categories of left $MH_d$-modules and right $MH_d$-modules.
\end{enumerate}
\end{remark}
We can explicitly describe bases for these irreducible representations as follows. For a partition $(\lambda_1,\ldots,\lambda_n)\vdash d$, we can draw a Young diagram with $\lambda_i$ boxes in row $i$, for each $i=1,\ldots,n$. (We will use the English (top-left justified) convention for Young diagrams.) A Young diagram where the boxes are filled with numbers is called a \emph{tableau}.
\begin{definition}
\begin{enumerate}
\item Given a bipartition $\bs{\lambda}=(\lambda^{(1)},\lambda^{(2)})$ of $d$, a \emph{standard bitableau} $U=(U^1,U^2)$ of shape $\bs{\lambda}$ is a pair such that $U^i$ is a tableau of shape $\lambda^i$ where the boxes of the two tableaux are filled 
with the numbers from $1$ through $d$ such that entries increase in each row from left to right and in each column from top to bottom in each tableau.
\item Consider a standard bitableau $U=(U^1,U^2)$ and an integer $1\leq i\leq d$. We define the \emph{content}:
\[c_U(i)=j-1+c_{U^j}(i).\]
Here, $j$ is such that the box containing $i$ is in the tableau $U^j$, and $c_{U^j}(i)=m-l$, where $i$ lies in the $m^{th}$ column and $l^{th}$ row of $U^j$.
\end{enumerate}
\end{definition}
\begin{example}
This is an example of a standard bitableaux with $d=9$ and shape $((2,2,1),(3,1))$
\[U=\left(\young(14,37,8),\young(259,6)\right).\]
In this case we have $c_U(2)=1$, $c_U(8)=-2$.
\end{example}
Let $T(\bs{\lambda})$ denote the set of standard bitableaux of shape $\bs{\lambda}=(\lambda,1^s)$, with $|\lambda|+s=d$. Then, as a vector space,
\[M^{\bs{\lambda}}=K\mathrm{-span}\{x_U:U=(U^1,U^2)\in T(\bs{\lambda})\}.\]
In particular, if $f^{\lambda}$ denotes the number of standard tableaux of shape $\lambda$, then $\dim(M^{\bs\lambda})={d\choose s}f^{\lambda}$.

\begin{remark} \label{rem:hecke-basis}
With reference to Remark~\ref{rem:p-proj}, for any partition $\lambda\vdash d$, the representation $M^{\lambda}$ of the Hecke algebra $H_d$ has a basis given by $x_U$, where $U$ is a standard tableau of shape $\lambda$, such that $x_{(U,\emptyset)}=p^*(x_U)$.
\end{remark}

\begin{definition}
The \emph{mirabolic Jucys-Murphy elements} $\{\LL_i:i=1,\ldots,d\}$ of $MH_d$ are defined by
$$\LL_i:=v^{2-2i}\tau_{i-1}\cdots\tau_1e\tau_1\cdots\tau_{i-1}.$$
(In particular, $\LL_1:=e$.)
\end{definition}

These elements are important because the center of the algebra $MH_d$ is exactly given by the ring of symmetric functions in the mirabolic Jucys-Murphy elements (see \cite[Theorem 6.4]{R14}), and because their action characterizes the irreducible representations.

\begin{theorem} [{\cite[Corollary 6.3]{R14}}]\label{thm:JM_action}
On the $MH_d$-representation $M^{\bs{\lambda}}$, the action of the $\LL_i$'s is given via:
\[\LL_i\cdot x_U = \eps_iv^{2c_U(i)}x_U, \qquad\textrm{where } \eps_i=\begin{cases}1 \textrm{ if } i\in U^1,\\0\textrm{ if } i\in U^2,\end{cases}\]
for any $1\leq i\leq d$ and $U\in T(\bs{\lambda})$.
\end{theorem}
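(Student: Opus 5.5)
The plan is to argue by induction on $d$, using the restriction from $MH_d$ to $MH_{d-1}$ (the subalgebra generated by $e,\tau_1,\dots,\tau_{d-2}$) together with the centrality of symmetric functions in the $\LL_i$ (\cite[Theorem 6.4]{R14}).

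\textbf{Basic properties of the $\LL_i$.} First I would check directly from \eqref{nrel1}--\eqref{nrel7} that:
\begin{itemize}
\item the $\LL_i$ pairwise commute;
\item $\LL_{i+1}=v^{-2}\tau_i\LL_i\tau_i$;
\item $\LL_d$ commutes with $MH_{d-1}$.
\end{itemize}
For $i=1$ the first property is essentially \eqref{nrel6}. For $i\ge 2$ the usual braid argument from the Ariki--Koike setting applies, using $e\tau_j=\tau_je$ for $j\ge2$. Consequently $\sum_{i\le d}\LL_i$ and $\sum_{i\le d-1}\LL_i$ are central in $MH_d$ and $MH_{d-1}$ respectively. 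Their difference $\LL_d$ therefore acts on each $MH_{d-1}$-isotypic component of $\mathrm{Res}\,M^{\bs\lambda}$ by (central character of $M^{\bs\lambda}$) minus (central character of the component).

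\textbf{Branching rule.} Next I would establish that $\mathrm{Res}^{MH_d}_{MH_{d-1}}M^{\bs\lambda}$ is multiplicity free, with constituents $M^{\bs\mu}$ for $\bs\mu$ obtained by removing a removable box of $\bs\lambda=(\lambda,1^s)$; the removable boxes are the corners of $\lambda$ and the bottom box of $1^s$. This should follow from the construction of $M^{\bs\lambda}$ in \cite{R14}, as a pullback or quotient of an Ariki--Koike Specht module, together with the dimension count $\dim M^{\bs\lambda}=\binom ds f^\lambda$. Iterating the branching rule gives a basis $x_U$ indexed by chains of bipartitions, i.e.\ by standard bitableaux $U\in T(\bs\lambda)$. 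On this basis each $\LL_i$ acts by a scalar, namely the difference of central characters at steps $i$ and $i-1$ of the chain.

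\textbf{Computing the scalars.} It remains to show that $\sum_i\LL_i$ acts on $M^{\bs\lambda}$ by $\sum_{b\in\lambda}v^{2c(b)}$, where the sum is over the boxes of the first component only. Then the step difference is $\eps_iv^{2c_U(i)}$, and the boxes in $U^2$ contribute $0$ as claimed. I would compute this scalar in two cases.
\begin{itemize}
\item For $s=0$, the module $M^{(\lambda,\emptyset)}=p^*M^\lambda$ of Remark~\ref{rem:p-proj} has $e=1$. Then $\LL_i=v^{2-2i}\tau_{i-1}\cdots\tau_1^2\cdots\tau_{i-1}$ is the classical Jucys--Murphy element of $H_d$, whose eigenvalues $v^{2c}$ are known.
\item For general $s$, I would use the inductive structure: by induction the scalar is already known on every constituent of the restriction, so only one eigenvalue of $\LL_d$ on a single vector is needed.
\end{itemize}
For the case where $d$ sits in $U^2$, I would pick a vector on which $e$ acts by $0$ and on which the $\tau_j$ act as in the sign-type component. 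Then $\LL_d$ kills it, because $\LL_d$ factors through $e$ after conjugation.

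\textbf{Main obstacle.} The hard part is the branching rule and the eigenvalue $0$ for boxes in the second component. The clean way through is to identify $MH_d$ with a quotient of the Ariki--Koike algebra, with $\tau_0=v^2e-1$ having eigenvalues $v^2-1$ and $-1$, and to transport its seminormal form. Note that $\LL_i$ is an affine combination of the Ariki--Koike Jucys--Murphy element and the classical one. One must then verify that the shifted eigenvalues combine to exactly $\eps_iv^{2c_U(i)}$. I would first try to make this identification explicit and only afterwards fall back on the central-character argument above.
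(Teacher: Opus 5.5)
The paper gives no proof of this statement; it quotes \cite[Corollary 6.3]{R14}. Judged on its own, your proposal has a genuine gap. The step you call the \emph{clean way through} fails as written, because $\tau_0=v^2e-1$ does not satisfy the type $B$ braid relation. Substituting $e=v^{-2}(\tau_0+1)$ into \eqref{nrel6} and using \eqref{nrel1} gives
\[\tau_0\tau_1\tau_0\tau_1-\tau_1\tau_0\tau_1\tau_0=-(v^2-1)(\tau_0\tau_1-\tau_1\tau_0),\]
which is nonzero since $e\tau_1\neq\tau_1e$ (check on $m_{(1,1),\emptyset}\in MV_{1,2}$ with the formulas of Section~\ref{sec:action-formulae}). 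More generally, $T_0=\alpha e+\beta$ satisfies $T_0\tau_1T_0\tau_1=\tau_1T_0\tau_1T_0$ only if $\alpha\beta=0$. So there is no Ariki--Koike seminormal form with parameters $v^2-1,-1$ to transport.

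Your identity $\LL_i=v^{-2}(L_i'+J_i)$, with $L_i'=v^{2-2i}\tau_{i-1}\cdots\tau_1\tau_0\tau_1\cdots\tau_{i-1}$ and $J_i$ the classical Jucys--Murphy element, is true but useless. Already $J_2=v^{-2}\big((v^2-1)\tau_1+v^2\big)$ does not commute with $e=\LL_1$. Hence the $J_i$ are not diagonal on the $x_U$, and there are no eigenvalues to \emph{combine}.

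The right identification is the one you implicitly use when proving commutativity: $T_0=e$. Then \eqref{nrel1}--\eqref{nrel6} are exactly the Ariki--Koike relations with $q=v^2$ and $(Q_1,Q_2)=(1,0)$, and $MH_d$ is the quotient by \eqref{nrel7}. Moreover $\LL_i=q^{1-i}T_{i-1}\cdots T_1T_0T_1\cdots T_{i-1}$ is literally the Ariki--Koike Jucys--Murphy element, whose seminormal eigenvalue $Q_jq^{c}$ is precisely $\eps_iv^{2c_U(i)}$. What remains is the degenerate value $Q_2=0$, where separation fails because every box of the second component has eigenvalue $0$. For second component $1^s$ you must check four things.
\begin{itemize}
\item The seminormal coefficients stay finite: consecutive entries of the column are vertically adjacent, and $\tau_k$ acts there by $-1$.
\item The resulting modules are irreducible and pairwise non-isomorphic: within a shape the eigenvalues still determine $U$, and their multiset determines the shape.
\item These are exactly the simples satisfying \eqref{nrel7}.
\item Via Theorem~\ref{thm:MHd-reps}, they are the $M^{\bs\lambda}$.
\end{itemize}

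Your fallback does not close the gap either. The branching rule is only asserted. The dimension identity is consistent with it but does not prove it, and extracting it from \cite{R14} needs exactly the explicit model you have not set up. More seriously, the central-character bookkeeping only says that $\LL_d$ acts by some $x$ on the $(\lambda,1^{s-1})$-constituent and by $x+v^{2c(\square)}$ on the $(\lambda\setminus\square,1^s)$-constituents, with $x$ undetermined. You propose to fix $x$ with a vector killed by $e$ on which every $\tau_j$ acts by $-1$. Such a vector spans a one-dimensional submodule, so it exists only in $M^{(\emptyset,1^d)}$. Already for $\bs\lambda=((1),(1))$, the constituent data together with $\dim\ker\LL_2=\dim\ker e=1$ are consistent with both $x=0$ and $x=-1$. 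Deciding requires the actual matrix of $\tau_1$: \eqref{nrel6} forces its diagonal entry on the $e=1$ line to vanish, whence $\operatorname{tr}\LL_2=1$ and $x=0$. So an explicit model of $M^{\bs\lambda}$ is unavoidable, and the corrected Ariki--Koike route supplies it.
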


\begin{remark} \label{rem:simulbases}
\begin{enumerate}
\item The above theorem implies that the action of the Jucys-Murphy elements on any representation $M^{\bs\lambda}$ is simultaneously diagonalizable with respect to the basis $\{x_{U}: U\in T(\bs\lambda)\}$. Consequently, subsets of these basis act as bases for (simultaneous) eigenspaces of the Jucys-Murphy elements.

\item We note that the element $\LL_i$ is sent to itself by the anti-automorphism mentioned in Remark~\ref{rem:p-proj}(2). Therefore, the above theorem is valid even if $M^{\bs\lambda}$ is viewed as a right $MH_d$-representation via the anti-automorphism.
\end{enumerate}
\end{remark}




\subsection{Convolution}\label{sec:conv}
The mirabolic quantum group and the mirabolic Hecke algebra arise from a convolution construction. We recall some of the basic notions here and we refer the reader to \cite[\S 3]{R14}, \cite[\S 2]{R15} and \cite[\S 4]{FZM} for more details.

Let $\FF_q$ be the finite field with $q$ elements, $G:=\GL(\FF_q^d)=\GL_d(\FF_q)$, $\ff(d)$ the variety of complete flags in $\FF_q^d$, and $\ff(n,d)$ the variety of $n$-step partial flags in $\FF_q^d$. More explicitly the flag varieties can be thought of as the projective varieties consisting of increasing sequences of subspaces:
\[\ff(d)=\{F=(0=F_0\subsetneq F_1\subsetneq \cdots\subsetneq F_d=\FF_q^d)~|~\dim(F_i)=i\},\]
\[\ff(n,d)=\{F=(0=F_0\subseteq F_1\subseteq \cdots\subseteq F_n=\FF_q^d)\}.\]

The natural action of $G$ on $\FF_q^d$ induces an action on $\ff(d)$ and $\ff(n,d)$, and the diagonal action of $G$ on $\ff(d)\times\ff(d)$ (resp. $\ff(n,d)\times\ff(n,d)$) 
has finitely many orbits (independently of $q$) that can be parametrized by permutation matrices (resp. matrices with non-negative integer entries and total sum $d$). Then, the space of $G$-invariant $\CC$-valued functions over pairs of complete flags $\CC\left(\ff(d)\times\ff(d)\right)^{G}$,
has a natural convolution product given by the formula:
\begin{equation} \label{eq:convo1}
(\alpha*\beta)(F,F'):=\sum_{F''\in\ff(d)}\alpha(F,F'')\beta(F'',F'),
\end{equation}
for any $\alpha,\beta\in\CC\left(\ff(d)\times\ff(d)\right)^{G}$. The structure constants for the above product are polynomial in $q$. Therefore, the above convolution algebra can be considered the specialization of a $K(=\CC(v))$-algebra at $v=\sqrt{q}$. This $K$-algebra is exactly the Hecke algebra $H_d$. In terms of $\CC$-valued functions, the identity of $H_d$ is the characteristic function of the $G$-orbit $\{(F,F')\in\ff(d)\times\ff(d) ~|~F_i=F'_i \text{ for all }i=1,\ldots,d\}$, and the generator $\tau_j$ corresponds to the characteristic function of the $G$-orbit $\{(F,F')\in\ff(d)\times\ff(d) ~|~F_i=F'_i \text{ for }i\neq j,~F_j\cap F'_j=F_{j-1}=F'_{j-1}\}$.

Analogously, using the same formula of \eqref{eq:convo1}, but summing over $\ff(n,d)$, we get the convolution algebra $\CC\left(\ff(n,d)\times\ff(n,d)\right)^{G}$, which turns out to be the specialization at $v=\sqrt{q}$ of the quantum Schur algebra $U(n,d)$, which is a finite dimensional quotient of $\uvsln$ (see \cite{BLM}). Here, we can see the generators as follows: 
$$k_i^{\pm 1}=\sum_{\substack{(d_1,\ldots,d_n)\in\ZZ_{\geq0}^n\\\sum_i d_i=d}}v^{\pm(d_i-d_{i+1})}\mathbf{1}^{d_1,\ldots,d_n},$$
$$ e_i=\sum_{\substack{(d_1,\ldots,d_n)\in\ZZ_{\geq0}^n\\\sum_i d_i=d}}e_i^{d_1,\ldots,d_n}, \qquad f_i=\sum_{\substack{(d_1,\ldots,d_n)\in\ZZ_{\geq0}^n\\\sum_i d_i=d}}v^{1-d_{i+1}}f_i^{d_1,\ldots,d_n}$$
where $\mathbf{1}^{d_1,\ldots,d_n}$ is the characteristic function of the orbit $\{(F,F')\in\ff(n,d)\times\ff(n,d) ~|~F_j=F'_j,~\dim(F_j)=d_j \text{ for all }j=1,\ldots,n-1\}$, $e_i^{d_1,\ldots,d_n}$ is the characteristic function of the orbit $\{(F,F')\in\ff(n,d)\times\ff(n,d) ~|~F_j=F'_j \text{ for }i\neq j,~F_i\supset F'_i,~\dim(F_i/F'_i)=1,~\dim(F_j)=d_j \text{ for all }j=1,\ldots,n-1\}$, $f_i^{d_1,\ldots,d_n}$ is the characteristic function of the orbit $\{(F,F')\in\ff(n,d)\times\ff(n,d) ~|~F_j=F'_j \text{ for }i\neq j,~F_i\subset F'_i,~\dim(F'_i/F_i)=1,~\dim(F_j)=d_j \text{ for all }j=1,\ldots,n-1\}$. Additionally, the identity of the algebra is $\mathbf{1}=\sum\limits_{\substack{(d_1,\ldots,d_n)\in\ZZ_{\geq0}^n\\\sum_i d_i=d}}\mathbf{1}^{d_1,\ldots,d_n}$.

The quantum Schur-Weyl duality of the commuting actions of $\uvsln$ and $H_d$ on $V^{\otimes d}$ (with $V=\CC(v)^n$) then can be seen as coming from the convolution action of the two convolution algebras on either side of the space $V_{n,d}=\CC\left(\ff(n,d)\times\ff(d)\right)^{G}$ (see \cite{J86} and \cite{GL}).

As was shown in \cite{MWZ}, the diagonal action of $G$ on $\ff(d)\times\ff(d)\times \FF_q^d$ (resp. $\ff(n,d)\times\ff(n,d)\times \FF_q^d$) 
also has finitely many orbits (independently of $q$) that can be parametrized by \emph{decorated matrices}. The space of invariant functions $\mm\mathcal{H}_{q,d}:=\CC\left(\ff(d)\times\ff(d)\times \FF_q^d\right)^{G}$ (resp. $M\mathcal{U}_{q}(n,d):=\CC\left(\ff(n,d)\times\ff(n,d)\times \FF_q^d\right)^{G}$) is a unital associative $\CC$-algebra with the convolution product defined by
\begin{equation} \label{eq:convo}
(\alpha*\beta)(F,F',u):=\sum_{F''\in\ff(n,d),~u'\in\FF_q^d}\alpha(F,F'',u')\beta(F'',F',u-u'),
\end{equation}
(replace with $F''\in\ff(n,d)$ for $M\mathcal{U}_{q}(n,d)$) for any $\alpha,\beta\in \mm\mathcal{H}_{q,d}$ (resp. $M\mathcal{U}_{q}(n,d)$), where the structure constants are again polynomials in $q$. We can then view $\mm\mathcal{H}_{q,d}$ (resp. $M\mathcal{U}_{q}(n,d)$) to be the specialization of the mirabolic Hecke algebra $MH_d$ (resp. an algebra $MU(n,d)$) over $K=\CC(v)$, when $v=\sqrt{q}$. We call $MU(n,d)$ the mirabolic quantum Schur algebra.

As in the usual Hecke algebra, the identity in the algebra $MH_d$ is the characteristic function of the $G$-orbit $\{(F,F',0)\in\ff(d)\times\ff(d)\times \FF_q^d ~|~F_i=F'_i \text{ for all }i=1,\ldots,d\}$, and the generator $\tau_j$ for $j>0$ corresponds to the characteristic function of the $G$-orbit $\{(F,F',0)\in\ff(d)\times\ff(d)\times \FF_q^d ~|~F_i=F'_i \text{ for }i\neq j,~F_j\cap F'_j=F_{j-1}=F'_{j-1}\}$. Finally, the generator $\tau_0\in MH_d$ (in the notation of \cite{FZM}) is given by the characteristic function of the $G$-orbit $\{(F,F',v)\in\ff(d)\times\ff(d)\times \FF_q^d ~|~F_i=F'_i \text{ for all }i=1,\ldots,d, v\in F_1\sm\{0\}\}$ and $e=v^{-2}(\tau_0+1)$.

Analogously to what we saw above for the quantum Schur algebra, we can describe the generators of $MU(n,d)$ as follows: 
$$k_i^{\pm 1}=\sum_{\substack{(d_1,\ldots,d_n)\in\ZZ_{\geq0}^n\\\sum_i d_i=d}}v^{\pm(d_i-d_{i+1})}\mathbf{1}^{d_1,\ldots,d_n},\quad e_i=\sum_{\substack{(d_1,\ldots,d_n)\in\ZZ_{\geq0}^n\\\sum_i d_i=d}}v^{1-d_i}e_i^{d_1,\ldots,d_n}$$
$$f_i=\sum_{\substack{(d_1,\ldots,d_n)\in\ZZ_{\geq0}^n\\\sum_i d_i=d}}v^{1-d_{i+1}}f_i^{d_1,\ldots,d_n},\quad \ell=\sum_{\substack{(d_1,\ldots,d_n)\in\ZZ_{\geq0}^n\\\sum_i d_i=d}}v^{-2d_1}\ell^{d_1,\ldots,d_n},$$
where $\mathbf{1}^{d_1,\ldots,d_n}$ is the characteristic function of the orbit $\{(F,F',0)\in\ff(n,d)\times\ff(n,d)\times\FF_q^d ~|~F_j=F'_j,~\dim(F_j)=d_j \text{ for all }j=1,\ldots,n-1\}$, $e_i^{d_1,\ldots,d_n}$ is the characteristic function of the orbit $\{(F,F',0)\in\ff(n,d)\times\ff(n,d)\times\FF_q^d ~|~F_j=F'_j \text{ for }i\neq j,~F_i\supset F'_i,~\dim(F_i/F'_i)=1,~\dim(F_j)=d_j \text{ for all }j=1,\ldots,n-1\}$, $f_i^{d_1,\ldots,d_n}$ is the characteristic function of the orbit $\{(F,F',0)\in\ff(n,d)\times\ff(n,d)\times\FF_q^d ~|~F_j=F'_j \text{ for }i\neq j,~F_i\subset F'_i,~\dim(F'_i/F_i)=1,~\dim(F_j)=d_j \text{ for all }j=1,\ldots,n-1\}$, and $\ell^{d_1,\ldots,d_n}$ is the characteristic function of the $G$-invariant subvariety $\{(F,F',u)\in\ff(n,d)\times\ff(n,d)\times\FF_q^d ~|~F_j=F'_j,~\dim(F_j)=d_j \text{ for all }j=1,\ldots,n-1,~u\in F_1\}$. Again, the identity of the algebra is $\mathbf{1}=\sum\limits_{\substack{(d_1,\ldots,d_n)\in\ZZ_{\geq0}^n\\\sum_i d_i=d}}\mathbf{1}^{d_1,\ldots,d_n}$.

By \cite[Proposition 4.2.5]{FZM}, the generators of $MU(n,d)$ satisfy the relations in Definition \ref{def:uvsln} (in addition to other relations that depend on $d$) and hence we have a surjective map $MU(n)\onto MU(n,d)$ for all $d$. We notice here that in \cite{FZM} the authors prefer to work with the `general linear' analogue of the mirabolic quantum group, while we are using the `special linear' version, which is why the `Cartan part' has $n$ generators in their case ($H_a$, $1\leq a\leq n$, see \cite[Def. 2.1.2]{FZM}), and $n-1$ in our case ($k_i$, $1\leq i\leq n-1$). This difference is not significant for our purposes.

Now, we consider the vector space $\CC(\ff(n,d)\times\ff(d)\times \FF_q^d)^{G}$, which has a left action of $\mm\uu_q(n,d)$ and a right action of $M\mathcal{H}_{q,d}$ given by essentially the same formula as \eqref{eq:convo}, summing over the middle partial flag or complete flag. Then these actions centralize each other (see \cite[$\mathsection 6$]{R15}), giving us a mirabolic analogue of the geometric construction of the quantum Schur-Weyl duality. Again the structure constants are polynomials in $q$, so we can consider this to be the specialization of a $K$-vector space $MV_{n,d}$, which we call \emph{mirabolic tensor space} (despite it not actually being a tensor product), with a left action of $MU(n,d)$ and a right action of $MH_d$. We will describe this in more details in the next section.

\section{Mirabolic quantum Schur-Weyl duality}\label{sec:MSWD}

\subsection{Main theorem}

By the discussion on convolution at the end of the previous section, we have a finite dimensional $K$-vector space $MV_{n,d}$ with a left action of the mirabolic quantum Schur algebra $MU(n,d)$ and a right action of the mirabolic Hecke algebra $MH_d$ and the two actions centralize each other. More precisely, by \cite[Lemma 6.6]{R15}, we have the equality:
\[\End_{MH_d}(MV_{n,d})=MU(n,d).\]
As a consequence, since $MH_d$ is semisimple, by the double centralizer theorem (see for example \cite[Theorem 5.18.1]{Eting}), we have that $\End_{MU(n,d)}(MV_{n,d})$ is equal to the image of $MH_d$ in $\End(MV_{n,d})$. Furthermore, the mirabolic tensor space $MV_{n,d}$ decomposes as an $(MU(n,d),MH_d)$-bimodule:
\begin{equation} \label{eq:DCT}
MV_{n,d} = \bigoplus_{\bs\lambda\in \Lambda} L_{\bs\lambda}\otimes M^{\bs\lambda},
\end{equation}
where the $L_{\bs\lambda}$'s (resp. the $M^{\bs\lambda}$'s) are pairwise non-isomorphic simple modules over the algebra $MU(n,d)$ (resp. $MH_d$) and $\Lambda$ is some finite index set. Since $MU(n,d)$ is a quotient of the mirabolic quantum group $MU(n)$, we can view $MV_{n,d}$ as an $(MU(n),MH_d)$-bimodule. The main theorem of this paper describes the above decomposition explicitly in terms of the already classified $MU(n)$-modules and $MH_d$-modules:

\begin{theorem} \label{thm:MSWD}
As an $(MU(n),MH_d)$-bimodule, we have a decomposition of the mirabolic tensor space:
\begin{equation} \label{eq:MSWD}
MV_{n,d} = \bigoplus_{\bs\lambda\in M\Lambda_{n,d}} L_{\bs\lambda}\otimes M^{\bs\lambda}.
\end{equation}
Here, $M\Lambda_{n,d}=\{\bs\lambda=(\lambda,1^s): |\lambda|+s=d, s\leq n \text{ and } \lambda \text { has at most }n \text{ parts}\}$.
\end{theorem}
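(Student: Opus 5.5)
We start from the double centralizer decomposition \eqref{eq:DCT}. What remains is to identify the index set $\Lambda$ and to show that the $MU(n)$-partner of each $M^{\bs\lambda}$ is precisely $L_{\bs\lambda}=L_{(\lambda,1^s)}$, with $\lambda$ read as a dominant $\sll_n$-weight.

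\textbf{Step 1: weight spaces of $MV_{n,d}$.} First describe $MV_{n,d}$ combinatorially. Its basis is indexed by $G$-orbits on $\ff(n,d)\times\ff(d)\times\FF_q^d$, i.e. by decorated $n\times d$ matrices. From this we read off the mirabolic weight spaces: the $k_i$ weight is given by the dimension vector $(d_1,\dots,d_n)$ of the partial flag, and $\ell$ acts as a projector. Counting orbits according to whether $u\in F_1$ then gives the dimensions of $(MV_{n,d})_{\mu,\epsilon}$. By Proposition~\ref{prop:weightspace}, these dimensions determine the $MU(n)$-decomposition. The multiplicity of each $L_{(\lambda,1^s)}$ is then $\dim$ of some $MH_d$-module, which we must identify.

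\textbf{Step 2: the extreme cases.} Next handle the special cases the paper announces. One case is $s=0$ together with the summands on which $\ell$ acts by $0$ or $1$. The $\ell=1$ part of $MV_{n,d}$, via $\pi_1$ (Remark~\ref{rem:pi0-pi1}) and the quotient $p$ (Remark~\ref{rem:p-proj}), should reduce to ordinary quantum Schur--Weyl duality. This yields the summands $L_{(\lambda,\emptyset)}\otimes M^{(\lambda,\emptyset)}$.

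\textbf{Step 3: matching via Jucys--Murphy eigenvalues.} For general $s$, we use depth. A simple summand $L_{(\lambda,1^s)}$ is detected by weight vectors of depth $s$. By Lemma~\ref{lem:oldepth}, the space $M'$ of depth-$s$ vectors is linearly isomorphic to the space of co-depth $n-s$ vectors, and the weights shift by $\omega_s-\omega'_s$. Because the map is given by elements of $MU(n)$, it commutes with the $MH_d$-action. On the Hecke side, Theorem~\ref{thm:JM_action} shows that $M^{(\lambda,1^s)}$ has simultaneous eigenvectors $x_U$ on which exactly $s$ of the elements $\LL_i$ act by $0$. One would like to take $U$ with $U^2=\{1,\dots,s\}$ as a column, so that $\LL_1=\dots=\LL_s=0$, and then the remaining $\LL_i$ have eigenvalues $v^{2c}$ that reconstruct $\lambda$.

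So I would construct explicit vectors in $MV_{n,d}$ annihilated by $\LL_1,\dots,\LL_s$ and lying in a given $MU(n)$-weight space. The aim is to show that the joint eigenspace with this pattern of eigenvalues, intersected with the highest-weight vectors of depth $s$, has dimension equal to the multiplicity of $M^{(\lambda,1^s)}$. Comparing eigenvalue patterns then forces the $MU(n)$-partner of $M^{(\lambda,1^s)}$ to be $L_{(\lambda,1^s)}$. The constraints $s\le n$ and $\ell(\lambda)\le n$ come out automatically, since only those modules occur as $MU(n,d)$-simples with nonzero weight spaces in the relevant range.

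\textbf{Step 4: dimension count.} Finally, confirm completeness by checking
\[
\dim MV_{n,d}=\sum_{\bs\lambda\in M\Lambda_{n,d}}\dim L_{\bs\lambda}\cdot\binom{d}{s}f^\lambda .
\]
Both sides count decorated matrices, and this can be verified by an RSK-type bijection or a generating-function argument.

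\textbf{Main obstacle.} I expect the hardest part to be Step 3: building explicit vectors killed by the mirabolic Jucys--Murphy elements and controlling their $\LL_i$ eigenvalues. This requires the explicit convolution formulas for the right action (from \cite{FZM}). I would first try the case $d=s$, $\lambda=\emptyset$, where the vector should be a $v$-antisymmetrization of a basis element with $u$ generic, and then induct on $d$.
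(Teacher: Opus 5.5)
Your outline names the right ingredients: the left-module structure from weight spaces, the $s=0$ case from Jimbo's duality, vectors killed by the mirabolic Jucys--Murphy elements, and the depth/co-depth transport of Lemma~\ref{lem:oldepth}. However, Step~2 is incorrect as stated, and Step~3 is missing the mechanism that actually pins down the $MU(n)$-partner.

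\textbf{Step~2.} The $\ell=1$ eigenspace of $MV_{n,d}$ is not an $MU(n)$-submodule. Inside any $L_{(\lambda,1^s)}=L_\lambda\otimes W_s$ with $0<s<n$ it is the proper nonzero subspace $L_\lambda\otimes\mathrm{span}\{u_I:1\notin I\}$, which $e_1$ does not preserve (e.g.\ $e_1u_{\{2\}}=u_{\{1\}}$ in $W_1$). So it cannot ``reduce to'' quantum Schur--Weyl duality. What you need is a sub-bimodule on which both $\ell$ and $e$ act by $1$. The paper uses the pullback $\phi:V_{n,d}\hookrightarrow MV_{n,d}$, $m_{\bfr}\mapsto\sum_{\bfj\in J_{\bfr}}m_{\bfr,\bfj}$, which intertwines through $\pi_1$ and $p$ (Lemma~\ref{lem:phi-pullback}).

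The summands with $\ell=0$ (i.e.\ $s=n$) cannot be handled this way at all. Their Hecke partners $M^{(\lambda,1^n)}$ have $\LL_1=\cdots=\LL_n=0$, so they are not pulled back from $H_d$. In the paper they are the base case $s=\min(n,d)$ of the induction, and they already require the explicit zero-eigenvectors of Lemma~\ref{lem:simzero}.

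\textbf{Steps~1 and~4.} Step~4 only checks total dimension, which does not determine the left-module decomposition; you need the $\ell$-refined characters of Proposition~\ref{prop:leftmod}. Also, $\ell$ is not diagonal in the orbit basis, so the relevant count in Step~1 is not ``whether $u\in F_1$''. It is the number of decorated matrices with no decoration in row $1$ (Corollary~\ref{cor:lconse}).

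\textbf{Step~3.} Joint Jucys--Murphy eigenvalues do identify the Hecke side. But a vector of depth $s$ and weight $\lambda+\omega_s$ does not identify the $MU(n)$-side. It may lie in $L_{(\mu,1^{s'})}$ for any $s'\geq s$, and even when $s'=s$ the weight only gives $\mu\geq\lambda$ in dominance order. Moreover, the vectors you can write down explicitly are only visibly killed by $\LL_1,\dots,\LL_s$. Neither their $\LL_{s+1},\dots,\LL_d$-eigenvalues nor their depth can be read off from the formula. Matching ``the joint eigenspace intersected with depth-$s$ highest-weight vectors'' against the multiplicity of $M^{(\lambda,1^s)}$ does not help either: the joint eigenspace of a fixed bitableau is itself a copy of the unknown partner, so the count presupposes what you want to prove.

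The paper closes this gap by reverse induction on $s$:
\begin{enumerate}
\item From an explicit basis of $Z_k=\bigcap_{i\leq k}\ker\LL_i$ it computes $\dim Z_k$ and $\dim(\ell\cdot Z_k)$ (Corollary~\ref{cor:zerodim}).
\item Subtracting the contributions of the already-matched pairs with $s'>s$ leaves a complement $Z$ with $\dim(\ell\cdot Z)/\dim Z=(n-s)/n$. By the mediant inequality this forces $Z$ into depth exactly $s$, and an inclusion--exclusion count shows $\LL_{s+1}$ acts on $Z$ by $1$ (Lemma~\ref{lem:intcompu}).
\item The averaging map $\phi^{n-s,s}_{n,d}$ produces vectors in $Z_s$ killed by $(1-\ell)e_1\cdots e_j$ for $j<n-s$, so they lie in $Z$. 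They have weight $\lambda+\omega_s'$, and the correct $\LL_j$-eigenvalues for $j\geq s+2$ by $\tau_j$-equivariance ($j\neq s$) and Jimbo's duality.
\item Lemma~\ref{lem:oldepth}(2) moves them to weight $\lambda+\omega_s$, and induction on $\lambda$ in dominance order finishes.
\end{enumerate}
Your suggested induction on $d$ would need a branching argument that you have not supplied.
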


The theorem asserts that for any bipartition $\bs\lambda\in M\Lambda_{n,d}$, the isotypic components of the simple $MU(n)$-module $L_{\bs\lambda}$ and the simple $MH_d$-module $M^{\bs\lambda}$ in $MV_{n,d}$ are non-zero and coincide. We will prove this in the following sections by showing:
\begin{enumerate}
\item The isotypic component of $L_{\bs\lambda}$ in $MV_{n,d}$ is non-zero if and only if $\bs\lambda\in M\Lambda_{n,d}$ (see Section~\ref{sec:leftmod}).
\item The isotypic component of $L_{\bs\lambda}$ in $MV_{n,d}$ is equal to the isotypic component of $M^{\bs\lambda}$ (see Section~\ref{sec:MSWDproof}).
\end{enumerate}

\begin{remark} \label{rem:TL}
Since $MH_d$ is a semisimple algebra, the element $1\in MH_d$ can be expressed as a sum of commuting idempotents, each corresponding to the projection onto the isotypic component of a different irreducible representation of $MH_d$. In particular, there exists an idempotent $\mathbf{e}_d\in MH_d$ which corresponds to projecting onto the representation $M^{(\emptyset,1^d)}$. It can be verified that this idempotent is given by:
\[\mathbf{e}_d=\prod_{i=1}^d(1-\LL_i),\]
where $\LL_i$ is the Jucys-Murphy element. Similarly, there exists another idempotent $\mathbf{e}^d\in MH_d$ which corresponds to the projection onto the representation $M^{(1^d,\emptyset)}$, and is given by the formula:
\[\mathbf{e}^d=\frac{(-v)^{\frac{d(d-1)}{2}}}{[d]_v!}\prod_{i=1}^d\Bigg(\LL_i\prod_{j=1}^{i-1}\Big(\tau_{i-j}-\frac{v^{2(i-j+1)}-v^{2(i-j)}}{v^{2(i-j)}-1}\Big)\Bigg).\]

Here, we assume that the product is multiplied from left to right as the indices $i$ and $j$ increase, and $[d]_v!$ represents the $v$-factorial:
\[[d]_v!=\prod_{i=1}^d\frac{v^n-v^{-n}}{v-v^{-1}}.\]

Note that the elements $\mathbf{e}_d$ and $\mathbf{e}^d$ can be viewed as elements of $MH_k$ for any $k\geq d$. We can think of these as mirabolic analogs of the $d$-antisymmetrizer element in the Iwahori-Hecke algebra that annihilates the sign representation. As a corollary of the theorem above, the two-sided ideal generated by the elements $\mathbf{e}_{d+1}$ and $\mathbf{e}^{d+1}$ is exactly the kernel of the $MH_k$-action on the mirabolic tensor space $MV_{d,k}$ whenever $k\geq d+1$. In particular, the quotient of the algebra $MH_d$ by the two-sided ideal generated by $\mathbf{e}_3$ and $\mathbf{e}^3$ should give a mirabolic analog of the \emph{Temperley-Lieb algebra}.
\end{remark}

\subsection{Explicit formulae for action on the mirabolic tensor space} \label{sec:action-formulae}
In this section we explicitly describe a basis for $MV_{n,d}$ and formulae for the action of $MU(n)$ and $MH_d$ on it, following \cite{FZM}. Remember from \cite{R15} that $\GL_d$-orbits on triples of two flags (complete or partial) and a vector are parametrized by decorated matrices. We will only need the case of one partial flag, one complete flag, and a vector.
\begin{definition}
For fixed $n,d\geq 1$, define the set 
$$S_{row}:=\{\mathbf{r}=(r_1,\ldots,r_d)~|~1\leq r_j\leq n\}=\{1,2,\dots, n\}^d$$
and for each $\mathbf{r}\in S_{row}$, define $$J_{\mathbf{r}}:= \{\mathbf{j}\subset\{1,\ldots,d\}~|~\text{ if }j_1,j_2\in\mathbf{j} \text{ with }j_1<j_2, \text{ then }r_{j_1}>r_{j_2}\}.$$
We define the set 
$$\Xi_{n,1^d}:=\{(\mathbf{r},\mathbf{j})~|~\mathbf{r}\in S_{row},~\mathbf{j}\in J_{\mathbf{r}}\}.$$ 
\end{definition}

Any element $\mathbf{r}\in S_{row}$ can also be viewed as an $n\times d$ matrix such that for each $1\leq i\leq d$, the $r_i^{th}$ entry of the $i^{th}$ column is one, and all the other entries of the column are zero. Thus, $S_{row}$ is the set of $n\times d$ matrices with non-negative integer entries, such that each column sums to one. Similarly, the set $\Xi_{n,1^d}$ can be viewed as the set of decorated matrices: Given a pair $(\mathbf{r},\mathbf{j})\in\Xi_{n,1^d}$, the element $\mathbf{r}$ can be viewed as an $n\times d$ matrix as above and the element $\mathbf{j}=\{j_1<j_2<\cdots<j_k\}$ dictates that the non-zero entry of the $j_i^{th}$ column must be decorated for each $i$, with the decreasing condition on $r_j$ meaning that the decorated entries form a diagonal from bottom left to top right. In what proceeds, we will circle matrix entries to indicate decorations. 

\begin{example}
Fix $n=3$ and $d=5$. The element $\bfr=(3,2,3,1,2)\in S_{row}$ can be viewed as the matrix:
\[\begin{pmatrix}
0&0&0&1&0\\0&1&0&0&1\\1&0&1&0&0
\end{pmatrix}.\]
For the same $\bfr$ and $\bfj=(1,2,4)$, we can view the pair $(\bfr,\bfj)$ as the decorated matrix:
\[\begin{pmatrix}
0&0&0&\circled1&0\\0&\circled1&0&0&1\\\circled1&0&1&0&0
\end{pmatrix}.\]
\end{example}

\begin{remark} \label{rem:bfr-bijection}
By \cite{MWZ}, there is a bijection $G\backslash(\ff(n,d)\times\ff(d)\times \FF_q^d)\longleftrightarrow \Xi_{n,1^d}$ which can be described as follows: To an element $(F,F',u)\in\ff(n,d)\times\ff(d)\times \FF_q^d,$ we associate an $n\times d$ matrix $A(F,F',u)=(a_{i,j})$ whose entries are given by:
\[a_{i,j}=\dim\Big(\frac{F_i\cap F_j'}{F_{i-1}\cap F_j'+F_i\cap F_{j-1}'}\Big).\]
We also construct a subset $\Delta(F,F',u) \sub \{1,2, \dots, d\}$ which is the intersection of all possible subsets $\Delta'\sub \{1,2, \dots, d\}$ such that:
\[u\in \sum\limits_{j\in \Delta'}\sum\limits_{i:a_{i,j}\neq 0} F_i\cap F_j'.\]
Then, the above bijection is given by:
\[(F,F',u)\mapsto (A(F,F',u), \Delta(F,F',u)).\]
(Here, we have viewed an element of $\Xi_{n,1^d}$ as an $n\times d$ matrix $A(F,F',u)$ with decorations in the columns indexed by $\Delta(F,F',u)$.)
Hence we can consider $MV_{n,d}$ to be a $K$-vector space, with basis  
$$\{m_{\mathbf{r},\mathbf{j}}~|~(\mathbf{r},\mathbf{j})\in \Xi_{n,1^d}\}.$$
By abuse of notation, we will often identify the basis element $m_{\bfr, \bfj}$ with the corresponding $n\times d$ matrix.
\end{remark}

We now 
collect all the formulae for the action of the algebras $MH_d$ and $MU(n)$ on the mirabolic tensor space that we need for our proofs.

We start with the right action of $MH_d$ on $MV_{n,d}$. The formulae from \cite[Lemma 2.2.2]{FZM} give us
$$m_{\mathbf{r},\mathbf{j}}\cdot\tau_0=\begin{cases}(v^2-1)m_{\mathbf{r},\mathbf{j}} & \text{ if }1\not\in\mathbf{j},~\exists~ j\in\mathbf{j} \text{ s.t. }r_1\leq r_j,\\ m_{\mathbf{r},\mathbf{j}\cup\{1\}} & \text{ if }1\not\in\mathbf{j},~\forall~ j\in\mathbf{j}~ r_1> r_j,\\
(v^2-2)m_{\mathbf{r},\mathbf{j}}+(v^2-1)m_{\mathbf{r},\mathbf{j}\setminus\{1\}} & \text{ if }1\in\mathbf{j}.\end{cases}$$
So, since $e=v^{-2}(\tau_0+1)$, it follows that
$$m_{\mathbf{r},\mathbf{j}}\cdot e=\begin{cases}m_{\mathbf{r},\mathbf{j}} & \text{ if }1\not\in\mathbf{j},~\exists~ j\in\mathbf{j} \text{ s.t. }r_1\leq r_j,\\ v^{-2}m_{\mathbf{r},\mathbf{j}}+v^{-2}m_{\mathbf{r},\mathbf{j}\cup\{1\}} & \text{ if }1\not\in\mathbf{j},~\forall~ j\in\mathbf{j}~ r_1> r_j,\\
(1-v^{-2})m_{\mathbf{r},\mathbf{j}}+(1-v^{-2})m_{\mathbf{r},\mathbf{j}\setminus\{1\}} & \text{ if }1\in\mathbf{j}.\end{cases}$$

\begin{corollary} \label{cor:eeigen}
The eigenvalues of $e$ are $0$ and $1$. The $0$-eigenspace i.e. the kernel for the $e$-action has a basis
$$\{m_{\mathbf{r},\mathbf{j}}-(v^2-1)m_{\mathbf{r},\mathbf{j}\setminus\{1\}}~|~1\in\mathbf{j}\}$$
while the $1$-eigenspace (i.e. $\im(e)$) has a basis
$$\{m_{\mathbf{r},\mathbf{j}}+m_{\mathbf{r},\mathbf{j}\setminus\{1\}}~|~1\in\mathbf{j}\}\cup\{m_{\mathbf{r},\mathbf{j}}~|~1\not\in\mathbf{j},~\exists~ j\in\mathbf{j} \text{ s.t. }r_1\leq r_j \}.$$

In particular
$$\dim(\ker(e))=\#\{(\mathbf{r},\mathbf{j})\in \Xi_{n,1^d}~|~1\in\mathbf{j}\} $$
and
$$\dim(\im(e))=\#\{(\mathbf{r},\mathbf{j})\in \Xi_{n,1^d}~|~1\not\in\mathbf{j}\}.$$
\end{corollary}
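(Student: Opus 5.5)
The formula for $m_{\mathbf{r},\mathbf{j}}\cdot e$ shows that each subspace
\[P_{\mathbf{r},\mathbf{j}}:=K\text{-span}\{m_{\mathbf{r},\mathbf{j}},\,m_{\mathbf{r},\mathbf{j}\cup\{1\}}\}\quad(1\notin\mathbf{j},\ \mathbf{j}\cup\{1\}\in J_{\mathbf{r}}),\qquad K m_{\mathbf{r},\mathbf{j}}\quad(1\notin\mathbf{j},\ \mathbf{j}\cup\{1\}\notin J_{\mathbf{r}})\]
is stable under $e$. Moreover $MV_{n,d}$ is the direct sum of these subspaces. The plan is therefore to diagonalize $e$ block by block and then collect the eigenvectors.

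First, take $(\mathbf{r},\mathbf{j})\in\Xi_{n,1^d}$ with $1\notin\mathbf{j}$. The condition $\mathbf{j}\cup\{1\}\in J_{\mathbf{r}}$ is exactly ``$r_1>r_j$ for all $j\in\mathbf{j}$''. Indeed, $\mathbf{j}\in J_{\mathbf{r}}$ already gives the decreasing condition among the other indices, and $1$ is the smallest index. Hence the pairs $(\mathbf{r},\mathbf{j}')$ with $1\in\mathbf{j}'$ correspond bijectively, via $\mathbf{j}'\mapsto\mathbf{j}'\setminus\{1\}$, to the pairs $(\mathbf{r},\mathbf{j})$ in the second case of the formula. (Here $\mathbf{j}'\setminus\{1\}\in J_{\mathbf{r}}$ automatically.)

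For such a pair, set $a=m_{\mathbf{r},\mathbf{j}'\setminus\{1\}}$ and $b=m_{\mathbf{r},\mathbf{j}'}$. The formulas give
\[a\cdot e=v^{-2}(a+b),\qquad b\cdot e=(1-v^{-2})(a+b).\]
Consequently $(a+b)\cdot e=a+b$. Also
\[(b-(v^2-1)a)\cdot e=\bigl((1-v^{-2})-(v^2-1)v^{-2}\bigr)(a+b)=0.\]
On this $2$-dimensional block, $e$ is therefore diagonalizable with eigenvalues $1$ and $0$. The two vectors are independent, since the determinant of the coefficient matrix is $1+(v^2-1)=v^2\neq0$.

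On the remaining $1$-dimensional blocks, that is the first case with $1\notin\mathbf{j}$ and some $j\in\mathbf{j}$ having $r_1\le r_j$, the vector $m_{\mathbf{r},\mathbf{j}}$ is fixed by $e$. This includes the case $\mathbf{j}=\emptyset$? No: when $\mathbf{j}=\emptyset$ the condition ``for all $j\in\mathbf{j}$, $r_1>r_j$'' holds vacuously, so $\mathbf{j}=\emptyset$ falls in the second case and is paired with $\{1\}$. This is consistent, because $\{1\}\in J_{\mathbf{r}}$ always.

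Taking the union over all blocks gives a basis of $MV_{n,d}$ consisting of eigenvectors of $e$. Those with eigenvalue $0$ and those with eigenvalue $1$ are exactly the sets listed in the statement. This shows that $e$ is diagonalizable with eigenvalues in $\{0,1\}$. Both eigenvalues actually occur, since $\mathbf{j}=\{1\}$ produces both kinds of vectors.

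For the dimension count, $\ker(e)$ has one basis vector for each pair with $1\in\mathbf{j}$. The basis vectors of $\im(e)$ consist of one per pair with $1\in\mathbf{j}$ (equivalently, via the bijection above, one per second-case pair with $1\notin\mathbf{j}$) together with one per first-case pair. Together these account for all pairs with $1\notin\mathbf{j}$.

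The only point requiring care is the bookkeeping showing that the blocks are disjoint and exhaust the basis. This rests on the bijection between the two types of pairs established in the first step; everything else is a direct computation.
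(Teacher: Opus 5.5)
Your proposal is correct and is the same argument the paper relies on: the paper gives no separate proof and reads the corollary straight off the explicit formula for $m_{\mathbf{r},\mathbf{j}}\cdot e$. Your decomposition into the $e$-stable planes spanned by $m_{\mathbf{r},\mathbf{j}\setminus\{1\}}$ and $m_{\mathbf{r},\mathbf{j}}$ (for $1\in\mathbf{j}$) and the fixed lines $K m_{\mathbf{r},\mathbf{j}}$ (first case), followed by the $2\times 2$ diagonalization and the bijection $\mathbf{j}\mapsto\mathbf{j}\setminus\{1\}$ used for the dimension count, is exactly the routine verification the paper leaves implicit.
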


Next, we recall the action of $\tau_j$ for $j>0$ from \cite[Lemma 2.2.2]{FZM}. Let $\mathcal{S}_d$ denote the symmetric group on $d$ letters and let $s_j\in \mathcal{S}_d$ denote the transposition $(j,j+1)$. The group $\mathcal{S}_d$ acts on $S_{row}$ by $s_j(r_1,\ldots,r_j,r_{j+1},\ldots,r_d)=(r_1,\ldots,r_{j+1},r_j,\ldots,r_d)$ and on $J_{\mathbf{r}}$ by its defining action as permutations of $\{1,\ldots,d\}$. 

If $j,j+1\not\in\mathbf{j}$, then

$$m_{\mathbf{r},\mathbf{j}}\cdot\tau_j=\begin{cases}m_{s_j\mathbf{r},s_j\mathbf{j}} & \text{ if }r_j< r_{j+1},\\
v^2 m_{s_j\mathbf{r},s_j\mathbf{j}} & \text{ if }r_j= r_{j+1},\\
v^2m_{s_j\mathbf{r},s_j\mathbf{j}}+(v^2-1)m_{\mathbf{r},\mathbf{j}} & \text{ if }r_j> r_{j+1}.\end{cases}$$

If $j\in \mathbf{j}$, $j+1\not\in \mathbf{j}$, then
$$m_{\mathbf{r},\mathbf{j}}\cdot\tau_j=\begin{cases}m_{s_j\mathbf{r},s_j\mathbf{j}} & \text{ if }r_j\leq r_{j+1},\\
m_{s_j\mathbf{r},s_j\mathbf{j}}+m_{\mathbf{r},\mathbf{j}\cup\{j+1\}} & \text{ if }r_j> r_{j+1},~\forall k\in\mathbf{j},~k>j+1 \Rightarrow r_{j+1}> r_k,\\
v^2m_{s_j\mathbf{r},s_j\mathbf{j}}+(v^2-1)m_{\mathbf{r},\mathbf{j}} & \text{ if }r_j> r_{j+1},~\exists k\in\mathbf{j} \text{ s.t. }k>j+1,r_{j+1}\leq r_k.\end{cases}$$

If $j\not\in \mathbf{j}$, $j+1\in \mathbf{j}$, then
$$m_{\mathbf{r},\mathbf{j}}\cdot\tau_j=\begin{cases}v^2m_{s_j\mathbf{r},s_j\mathbf{j}}+(v^2-1)m_{\mathbf{r},\mathbf{j}} & \text{ if }r_j\geq r_{j+1},\\
m_{s_j\mathbf{r},s_j\mathbf{j}}+m_{s_j\mathbf{r},\mathbf{j}\cup\{j\}} & \text{ if }r_j< r_{j+1},~\forall k\in\mathbf{j},~k>j+1 \Rightarrow r_{j}> r_k,\\
m_{s_j\mathbf{r},s_j\mathbf{j}} & \text{ if }r_j< r_{j+1},~\exists k\in\mathbf{j} \text{ s.t. }k>j+1,r_{j}\leq r_k.\end{cases}$$

If $j,j+1\in\mathbf{j}$, then necessarily $r_j>r_{j+1}$ and we have
$$m_{\mathbf{r},\mathbf{j}}\cdot\tau_j=
(v^2-1)m_{s_j\mathbf{r},\mathbf{j}\setminus\{j\}}+(v^2-1)m_{\mathbf{r},\mathbf{j}\setminus\{j+1\}}+(v^2-2)m_{\mathbf{r},\mathbf{j}}.$$

For the left action of $MU(n)$ on $MV_{n,d}$, we will only need a formula for the action of $\ell, k^{\pm}_i\in MU(n)$ from \cite[Lemma 2.2.1]{FZM}:

\begin{equation}\label{eq:kaction}k_{i}^{\pm 1}\cdot m_{\bfr,\bfj} = v^{\pm(\#\{j:r_j=i\}-\#\{j:r_j=i+1\})}m_{\bfr,\bfj}.\end{equation}

\begin{multline}\label{eq:laction}
\ell\cdot m_{\bfr,\bfj}= \\ \begin{cases}v^{-2\#\{i> \max\bfj~|~r_i=1\}}\left(m_{\bfr,\bfj}+\sum\limits_{k>\max\bfj,~r_k=1}m_{\bfr,\bfj\cup\{k\}} \right)  & \text{ if }\forall j\in\bfj, ~r_j\neq 1,\\
v^{-2\#\{i\geq \max\bfj~|~r_i=1\}}(v^2-1)\left(m_{\bfr,\bfj\setminus\{j\}}+\sum\limits_{k>\max(\bfj\setminus \{j\}),~r_k=1}m_{\bfr,(\bfj\setminus\{j\})\cup\{k\}} \right)  & \text{ if }\exists j\in\bfj,~r_j=1.
\end{cases}
\end{multline}
(We follow the convention that if $\emptyset=\bfj\subset\{1,\ldots,d\}$, then $\max\bfj:=0$.)

\begin{remark}
In the notation of \cite{FZM}, our Cartan generators $k_{i}^{\pm1}$ are equal to $(H_iH_{i+1}^{-1})^{\pm1}$.
\end{remark}

\begin{corollary} \label{cor:lconse}
Consider any $m_{\mathbf{r},\mathbf{j}}$ such that $j\in\mathbf{j}$ and $r_j=1$. Then, we have:
\[\ell\cdot m_{\mathbf{r},\mathbf{j}}=v^{2\#\{\max(\bfj\sm \{j\})<i<j~|~r_i=1\}}(v^2-1)\ell\cdot m_{\mathbf{r},\mathbf{j}\sm\{j\}}.\]
In particular, if $r_j=1$ for some $j\in \bfj$, then $\ell\cdot m_{\bfr,\bfj}$ is a non-zero scalar multiple of $\ell\cdot m_{\bfr,\bfj\sm\{j\}}$. As a consequence, $\mathrm{Im}(\ell)$ has a basis given by:
\[\{\ell\cdot m_{\bfr,\bfj}: \forall j\in\bfj, ~r_j\neq 1\}.\]
 Furthermore, if $j$ is minimal amongst all $i$'s such that $r_i=1$, then:
\[\ell\cdot m_{\mathbf{r},\mathbf{j}}=(v^2-1)\ell\cdot m_{\mathbf{r},\mathbf{j}\sm\{j\}}.\]
\end{corollary}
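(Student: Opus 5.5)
The plan is to read everything off directly from the formula \eqref{eq:laction}, after one structural observation about where a decorated entry equal to $1$ can sit.

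\emph{Step 1: locate $j$ inside $\bfj$.} Suppose $j\in\bfj$ with $r_j=1$. By definition of $J_{\bfr}$, the values $r_k$ for $k\in\bfj$ strictly decrease as $k$ increases. Since $1$ is the smallest possible value, no $k\in\bfj$ with $k>j$ can exist. Hence $j=\max\bfj$, and $j$ is the unique element of $\bfj$ with $r_j=1$.

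Put $\bfj'=\bfj\sm\{j\}$. Then no element of $\bfj'$ has $r$-value $1$, so $\ell\cdot m_{\bfr,\bfj'}$ is computed by the first case of \eqref{eq:laction}:
\[\ell\cdot m_{\bfr,\bfj'}=v^{-2\#\{i>\max\bfj'\,|\,r_i=1\}}\Big(m_{\bfr,\bfj'}+\sum_{k>\max\bfj',\,r_k=1}m_{\bfr,\bfj'\cup\{k\}}\Big).\]
Each $\bfj'\cup\{k\}$ here lies in $J_{\bfr}$, since $r_k=1$ is smaller than all $r$-values on $\bfj'$.

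\emph{Step 2: compare with the second case.} For $m_{\bfr,\bfj}$ the second case applies. Its bracket is exactly the same bracket as above, multiplied by $(v^2-1)v^{-2\#\{i\ge j\,|\,r_i=1\}}$, using $\max\bfj=j$. Dividing the two prefactors gives the exponent
\[2\big(\#\{i>\max\bfj'\,|\,r_i=1\}-\#\{i\ge j\,|\,r_i=1\}\big)=2\#\{\max\bfj'<i<j\,|\,r_i=1\},\]
because $\max\bfj'<j$. This is the displayed identity. If $j$ is the smallest index with $r_i=1$, the counted set is empty, which gives the final claim with factor $(v^2-1)$. Since $v^2-1\neq0$, the ``in particular'' statement follows as well.

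\emph{Step 3: a basis of $\im(\ell)$.} $\im(\ell)$ is spanned by the vectors $\ell\cdot m_{\bfr,\bfj}$ over all $(\bfr,\bfj)\in\Xi_{n,1^d}$. By Step 2 we may discard those $\bfj$ that contain an index $j$ with $r_j=1$. For linear independence of the remaining vectors, I would use a triangularity argument. When no $j\in\bfj$ has $r_j=1$, the vector $\ell\cdot m_{\bfr,\bfj}$ has coefficient $v^{-2(\cdots)}\neq0$ on $m_{\bfr,\bfj}$ itself. Every other term $m_{\bfr,\bfj\cup\{k\}}$ has a decorated entry with $r$-value $1$, so it is not of the same type. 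The basis vectors $m_{\bfr,\bfj}$ with no decorated $1$ are distinct for distinct indices, and projecting onto their span sends our family to a diagonal family with nonzero entries. Hence the family is linearly independent.

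The only delicate point is Step 1: the identification $j=\max\bfj$ is what makes the two sums in \eqref{eq:laction} coincide. The rest is bookkeeping of exponents.
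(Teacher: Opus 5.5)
Your proposal is correct and matches the paper's approach: the paper states this as an immediate consequence of the two cases of \eqref{eq:laction} and gives no separate argument. Your observations that $j=\max\bfj$, that the two brackets coincide, and that a triangularity argument gives linear independence fill in exactly the details the paper leaves implicit.
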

\subsection{The case $s=0$: quantum Schur-Weyl duality} \label{sec:MSWDs=0}

In this section, we prove the part of Theorem~\ref{thm:MSWD}, when $s=0$ (that is, for those bipartitions $\bs\lambda$' s for which the second partition is empty) by showing that in this case, the correspondence is equivalent to the quantum Schur-Weyl duality of Jimbo (see \cite{J86}) which we now recall.

Let $V_{n,d}:=(K^n)^{\otimes d}$ denote the quantum tensor space, which has a natural left action of the quantum Schur algebra $U(n,d)$ and right action of the Iwahori-Hecke algebra $H_d$. The algebra $U(n,d)$ is a quotient of the quantum group $\uvsln$ and its left action on $V_{n,d}$ lifts to an $\uvsln$-action. Then, we have the following duality statement:

\begin{theorem} \label{thm:QSWD}
The respective actions of the algebras $\uvsln$ and $H_d$ on the quantum tensor space $V_{n,d}$ commute with each other. Furthermore, the space $V_{n,d}$ has the following decomposition as a $(\uvsln,H_d)$-bimodule:
\[V_{n,d} = \bigoplus_{\lambda\in\Lambda_{n,d}} L_{\lambda}\otimes M^{\lambda},\]
where $\Lambda_{n,d}$ is the set of all partitions $\lambda$ of $d$ with at most $n$ parts, $L_\lambda$ is the simple $\uvsln$-module with highest weight $\lambda$, and $M^{\lambda}$ is the simple $H_d$-module corresponding to the partition $\lambda$.
\end{theorem}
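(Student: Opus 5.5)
This is Jimbo's classical quantum Schur--Weyl duality, so the plan is to assemble it from standard ingredients rather than prove anything new.

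\emph{Step 1: the two actions and their commutation.} Let $V=K^n$ have basis $x_1,\dots,x_n$, with the standard $\uvsln$-action ($k_i x_j=v^{\delta_{ij}-\delta_{i+1,j}}x_j$, $e_ix_{i+1}=x_i$, $f_ix_i=x_{i+1}$). Extend it to $V^{\otimes d}$ via the iterated coproduct. Let $\tau_j$ act on the $j$th and $(j+1)$st tensor factors by a suitably normalized $\check R$-matrix. For the normalization $\tau_j^2=(v^2-1)\tau_j+v^2$ this is $v^2$ on $x_a\otimes x_a$, a swap on one ordering of $x_a\otimes x_b$, and a swap plus $(v^2-1)$ times the identity on the other ordering. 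This matches the formulas of Section~\ref{sec:action-formulae} restricted to $\bfj=\emptyset$.
\begin{itemize}
\item The quadratic and braid relations hold because $\check R$ satisfies the Yang--Baxter equation.
\item Commutation with $\uvsln$ holds because $\check R$ is a $\uvsln$-module endomorphism of $V\otimes V$.
\end{itemize}
Alternatively, one can realize $V_{n,d}=\CC(\ff(n,d)\times\ff(d))^G$ by convolution, as recalled in Section~\ref{sec:conv}. Then commutation is just associativity of convolution, which is the route of \cite{GL}.

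\emph{Step 2: double centralizer.} Show that $\End_{H_d}(V_{n,d})$ is exactly the image of $\uvsln$, namely the quantum Schur algebra $U(n,d)$. In the geometric model, $\End_{H_d}(V_{n,d})\cong\CC(\ff(n,d)\times\ff(n,d))^G$, since $V_{n,d}$ is a sum of permutation modules induced from parabolic subalgebras. By \cite{BLM}, this algebra is generated by the images of $e_i,f_i,k_i^{\pm1}$ together with the weight idempotents $\mathbf 1^{d_1,\dots,d_n}$. These idempotents already lie in the image of $\uvsln$, since they are polynomials in the $k_i$ once the total degree $d$ is fixed. Because $H_d$ is semisimple at generic $v$, the double centralizer theorem \cite{Eting} gives
\[V_{n,d}=\bigoplus_{\mu} L'_\mu\otimes M^{\mu},\]
with distinct simple $\uvsln$-modules $L'_\mu$, indexed by the partitions $\mu\vdash d$ for which $M^\mu$ occurs.

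\emph{Step 3: matching the labels.} This is the main obstacle.
\begin{itemize}
\item For the $\uvsln$ side, weight vectors of $V_{n,d}$ have weights $\lambda$ with $\lambda_i=\#\{j:r_j=i\}$. So the character of $V_{n,d}$ is $(x_1+\cdots+x_n)^d$, exactly as classically at $v=1$. Highest weights are therefore partitions of $d$ with at most $n$ parts.
\item For the $H_d$ side, use the $v$-specialization or Tits deformation argument. Both sides are flat deformations, since dimensions of weight spaces and characters of $H_d$-modules are constant in $v$. So the decomposition specializes to classical Schur--Weyl duality at $v=1$, where $L_\lambda$ pairs with the Specht module $S^\lambda$.
\end{itemize}
Making this rigorous needs care: one must check that the labelling of $M^\lambda$ in Theorem~\ref{thm:JM_action} is compatible with the specialization. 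I would do this directly with the Jucys--Murphy elements instead. Take a highest-weight vector $y$ of weight $\lambda$ that is also a simultaneous eigenvector of $v^{2-2i}\tau_{i-1}\cdots\tau_1\tau_1\cdots\tau_{i-1}$. Such a vector can be built column by column as a product of $q$-antisymmetrizers on the columns of $\lambda$. Then check that its eigenvalues are $v^{2c_U(i)}$ for the row-reading tableau $U$. This identifies the multiplicity space of $L_\lambda$ with $M^\lambda$.

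Finally, every $\lambda\vdash d$ with at most $n$ parts occurs: its weight space contains a vector of that weight, and the Kostka-matrix triangularity argument on characters applies.
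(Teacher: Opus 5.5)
The paper gives no proof of this theorem. It is quoted from Jimbo \cite{J86}, with \cite{GL} for the convolution picture, so your outline is a reconstruction rather than a competing argument.

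\textbf{Steps 1 and 2.} These are the standard argument and are fine, up to two points. First, a slip: on the second ordering, $\tau_j$ is $v^2$ times the swap plus $(v^2-1)$ times the identity (the paper's case $r_j>r_{j+1}$). As you wrote it, the determinant on $\mathrm{span}\{x_a\otimes x_b,x_b\otimes x_a\}$ is $-1$ instead of $-v^2$, so $\tau_j^2=(v^2-1)\tau_j+v^2$ fails. Second, $m_{\bfr}$ matches $x_{r_1}\otimes\cdots\otimes x_{r_d}$ only after rescaling by powers of $v$ if $\tau_j$ is to commute with a coproduct-defined $\uvsln$-action. For instance, $m_{(2,1)}-v^2m_{(1,2)}$ spans the $\tau_1=-1$ line, while $\ker e_1$ in that weight space is spanned by $x_2\otimes x_1-v\,x_1\otimes x_2$. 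The convolution route avoids this.

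\textbf{Step 3.} This is the real problem, and it is exactly where the paper's labelling of $M^\lambda$ by Jucys--Murphy eigenvalues (Theorem~\ref{thm:JM_action} pulled back along $p$) enters. The paper uses it in Section~\ref{sec:base} when it asserts that $w_\lambda\otimes x_U$ has eigenvalues $v^{2c_U(i)}$. The check you propose fails.
\begin{itemize}
\item A product of column $q$-antisymmetrizers on consecutive positions realizes the \emph{column}-reading tableau, not the row-reading one. For $\lambda=(2,1)$ it gives $y=y_2\otimes x_1$, with $y_2$ spanning $\Lambda^2_qV$ in weight $(1,1,0,\dots,0)$. Then $y\cdot\tau_1=-y$, so $p(\LL_2)=v^{-2}\tau_1^2$ acts by $v^{-2}$, i.e.\ content $-1$ at position $2$. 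The row-reading tableau (first row $1,2$) has $c_U(2)=+1$.
\item Putting the antisymmetrizers on the columns of the row-reading tableau instead (positions $\{1,3\}$) gives no Jucys--Murphy eigenvector at all. Already at $v=1$, $x_1\otimes x_1\otimes x_2-x_2\otimes x_1\otimes x_1$ is not an eigenvector of $(12)$.
\end{itemize}

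Your strategy survives with the column-reading tableau, since any standard tableau of shape $\lambda$ pins down $\lambda$. But the eigenvector property is the actual content of the step, and you only assert it. Where $k-1$ and $k$ lie in the same column, $y\cdot\tau_{k-1}=-y$ and $p(\LL_k)=v^{-2}\tau_{k-1}\,p(\LL_{k-1})\,\tau_{k-1}$ give $c(k)=c(k-1)-1$. Positions at the tops of columns need a separate argument.

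A cleaner fix avoids explicit vectors and inducts on $d$:
\begin{itemize}
\item By the induction hypothesis and the Pieri rule for $V_{n,d}=V_{n,d-1}\otimes V$, the multiplicity space of $L_\lambda$ (simple by Step 2) restricts to $H_{d-1}$ as $\bigoplus_\mu M^\mu$. Here $\mu$ runs over the partitions obtained from $\lambda$ by removing one box.
\item By Theorem~\ref{thm:JM_action}, $M^\lambda|_{H_{d-1}}$ is this same sum.
\item For $d\ge 3$, a partition of $d$ is determined by its set of one-box removals, so the multiplicity space is $M^\lambda$.
\item For $d\le 2$, check directly that $\tau_1$ acts by $v^2$ on $S^2_qV$ and by $-1$ on $\Lambda^2_qV$.
\end{itemize}
With either fix, your outline proves the theorem in the labelling the paper actually uses.
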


As explained in Section \S 2.3, this duality can be seen as coming from the convolution action on functions over the variety of pairs of one partial and one complete flag (also see \cite{GL}). 
 
We consider the map defined in terms of convolution algebras by:
\[\pi_1:\CC(\ff(n,d)\times\ff(n,d)\times \FF_q^d)^{G}\ra\CC(\ff(n,d)\times\ff(n,d))^{G} ,\]
where, for any $\alpha\in\CC(\ff(n,d)\times\ff(n,d)\times \FF_q^d)^{G},$ we have that:
\begin{equation}\label{eq:conv-surj}\pi_1(\alpha)(F,F') = \sum_{u\in\FF_q^d}\alpha(F,F',u).\end{equation}
\begin{lemma}\label{lem:conv-pi1}The map $\pi_1$ gives rise to a surjection of the generic algebras $MU(n,d) \twoheadrightarrow U(n,d)$, which is compatible with the map $\pi_1:MU(n)\to \uvsln$ of Remark \ref{rem:pi0-pi1}. That is, we have the following commutative diagram:
\[\begin{tikzcd}
	  {MU(n)} & {\uvsln} \\
	 {MU(n,d)} & {U(n,d)}
	\arrow["\pi_1", two heads, from=1-1, to=1-2]
	\arrow[two heads, from=1-1, to=2-1]
	\arrow[two heads, from=1-2, to=2-2]
	\arrow["\pi_1", two heads, from=2-1, to=2-2]
\end{tikzcd}.\]
\end{lemma}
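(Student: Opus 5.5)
Since $\pi_1$ is defined by summing over the vector coordinate, the plan is to check directly at the level of convolution algebras that it is an algebra homomorphism, then to compute its values on the generators of $MU(n,d)$ listed in Section~\ref{sec:conv}. Commutativity of the diagram then reduces to checking it on the generators $e_i,f_i,k_i^{\pm1},\ell$ of $MU(n)$. Surjectivity follows at once, because $U(n,d)$ is generated by the images of $e_i,f_i,k_i^{\pm1}$.

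\emph{Homomorphism.} For $\alpha,\beta\in M\mathcal{U}_q(n,d)$, formula \eqref{eq:convo} gives
\[
\pi_1(\alpha*\beta)(F,F')=\sum_{u}\sum_{F'',u'}\alpha(F,F'',u')\beta(F'',F',u-u').
\]
Substituting $u''=u-u'$ decouples the two sums, and the right-hand side becomes
\[
\sum_{F''}\Big(\sum_{u'}\alpha(F,F'',u')\Big)\Big(\sum_{u''}\beta(F'',F',u'')\Big)=(\pi_1(\alpha)*\pi_1(\beta))(F,F').
\]
$G$-invariance is clearly preserved. The identity $\mathbf{1}$ is supported on $u=0$, so $\pi_1(\mathbf{1})=\mathbf{1}$.

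\emph{Generic version.} The value of $\pi_1$ on an orbit characteristic function is a characteristic function of a $G$-orbit on $\ff(n,d)\times\ff(n,d)$, multiplied by the number of vectors $u$ in the corresponding fibre. This count is a polynomial in $q$. Hence $\pi_1$ is given by polynomial structure constants in the orbit bases and descends to a $K$-linear map $MU(n,d)\to U(n,d)$. It is an algebra map, because the identity of structure constants holds for infinitely many $q$.

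\emph{Generators.} The functions $\mathbf{1}^{d_1,\ldots,d_n}$, $e_i^{d_1,\ldots,d_n}$ and $f_i^{d_1,\ldots,d_n}$ are supported on $u=0$. Therefore $\pi_1$ maps each of them to the function with the same name in $U(n,d)$, so $k_i^{\pm1}$ and $f_i$ go to $k_i^{\pm1}$ and $f_i$.

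Two points need care.
\begin{itemize}
\item The normalisation of $e_i$ differs between the two lists: $v^{1-d_i}$ in $MU(n,d)$, with no factor in $U(n,d)$. Before comparing, I would reconcile the conventions of \cite{BLM} and \cite{FZM}, for example by conjugating by a diagonal rescaling of the Chevalley generators, so that $e_i\mapsto e_i$.
\item For $\ell$, the subvariety underlying $\ell^{d_1,\ldots,d_n}$ has $F=F'$ and $u\in F_1$. Summing over $u$ gives $q^{d_1}=v^{2d_1}$ times $\mathbf{1}^{d_1,\ldots,d_n}$, and the factor $v^{-2d_1}$ cancels this. So $\pi_1(\ell)=\sum\mathbf{1}^{d_1,\ldots,d_n}=1$, which matches $\pi_1(\ell)=1$ in Remark~\ref{rem:pi0-pi1}.
\end{itemize}

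I expect the main obstacle to be this bookkeeping of normalisations for $e_i$, not any conceptual step. The generators correspond as required once that is settled, and the top-right path in the diagram then agrees with the bottom-left path on generators. Since all four maps are algebra homomorphisms, the diagram commutes.
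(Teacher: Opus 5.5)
Your proposal is correct and follows essentially the same route as the paper: multiplicativity of $\pi_1$ via the substitution $u''=u-u'$, the observation that $\mathbf{1}^{d_1,\ldots,d_n}$, $e_i^{d_1,\ldots,d_n}$ and $f_i^{d_1,\ldots,d_n}$ are supported on $u=0$, the count $\pi_1(\ell^{d_1,\ldots,d_n})=v^{2d_1}\mathbf{1}^{d_1,\ldots,d_n}$ giving $\pi_1(\ell)=1$, and passage to the generic algebras. The $e_i$ mismatch you flag is a misprint in the paper's $U(n,d)$ list, not a different convention: with $f_i$ and $k_i$ as printed, relation \eqref{eq:usln.3} in $U(n,d)$ forces $e_i=\sum v^{1-d_i}e_i^{d_1,\ldots,d_n}$, the same normalisation as in $MU(n,d)$, so no rescaling is needed (one that also changed the right vertical map would be undesirable) and $\pi_1(e_i)=e_i$ exactly, as the paper asserts.
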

\begin{proof}
It is straightforward to verify that $\pi_1(\alpha*\beta)=\pi_1(\alpha)*\pi_1(\beta)$ for all $\alpha,\beta\in \CC(\ff(n,d)\times\ff(n,d)\times \FF_q^d)^{G}$, hence $\pi_1$ induces an algebra map of the specialized convolution algebras. 
Given the definition of the generators in terms of invariant functions on flags, as explained in Section \ref{sec:conv}, we have that $\pi_1$ sends the generators $e_i,f_i,k_i^{\pm1}$ to the elements of the same name in $\uvsln$, hence proving surjectivity and the compatibility with Remark \ref{rem:pi0-pi1}. Finally, we need to check that $\pi_1(\ell)=1$, for the generator $\ell\in\mm\uu_q(n,d)$, which can be written (as in Section \ref{sec:conv})  as
$$\ell=\sum_{\substack{(d_1,\ldots,d_n)\in\ZZ_{\geq0}^n\\\sum_i d_i=d}}v^{-2d_1}\ell^{d_1,\ldots,d_n}$$
where
$\ell^{d_1,\ldots,d_n}$  the characteristic function of the $G$-invariant subvariety $\{(F,F',u)\in\ff(n,d)\times\ff(n,d)\times\FF_q^d ~|~F_j=F'_j,~\dim(F_j)=d_j \text{ for all }j=1,\ldots,n-1,~u\in F_1\}$.
We then have 
\begin{align*}\pi_1(\ell^{d_1,\ldots,d_n})(F,F')&=\sum_{u\in \FF_q^d}\ell^{d_1,\ldots,d_n}(F,F',u) \\
&=\begin{cases} v^{2\dim(F_1)} & \text{ if }F_i=F'_i,~\dim(F_i)=d_i,~\forall ~i \\ 0 & \text{ otherwise. }\end{cases}\end{align*}
Hence, 
\begin{align*}\pi_1(\ell)(F,F')&=\sum_{\substack{(d_1,\ldots,d_n)\in\ZZ_{\geq0}^n\\\sum_i d_i=d}}v^{-2d_1}\pi_1(\ell^{d_1,\ldots,d_n})(F,F')\\
&= \begin{cases} v^{-2\dim(F_1)}\pi_1(\ell^{\dim(F_1),\ldots,\dim(F_n)})(F,F') & \text{ if }F_i=F'_i~\forall~i \\ 0 & \text{ else }\end{cases}\\
&= \begin{cases} v^{-2\dim(F_1)}v^{2\dim(F_1)} =1 & \text{ if }F_i=F'_i~\forall~i \\ 0 & \text{ otherwise, }\end{cases}\end{align*}
which proves indeed that $\pi_1(\ell)$ is the identity for the convolution product in $\CC(\ff(n,d)\times\ff(n,d))^{G}$.
The result then follows from considering the generic convolution algebras.
\end{proof}
Analogously to Lemma \ref{lem:conv-pi1} (and using the same arguments in its proof to check that $p(e)=1$), we get that the projection $p:MH_d\to H_d$ of Remark \ref{rem:p-proj}(1) is given, on the level of their $v=\sqrt{q}$ specializations, via:
\[p:\CC(\ff(d)\times \ff(d)\times \FF_q^d)^{G}\to\CC(\ff(d)\times \ff(d))^{G},\]
where $p$ is defined by same formula as in \eqref{eq:conv-surj}.

Finally, we construct the pullback map from the specialization of the usual quantum tensor space $V_{n,d}$ to that of the mirabolic tensor space $MV_{n,d}$:
\[\phi:\CC(\ff(n,d)\times \ff(d))^{G}\ra \CC(\ff(n,d)\times \ff(d)\times \FF_q^d)^{G}\]
such that for any $\gamma\in \CC(\ff(n,d)\times \ff(d))^{G}$, we have:
\[\phi(\gamma)(F,F',u):=\gamma(F,F').\]
By construction $\phi$ is injective, hence induces an embedding of the generic $K$-vector spaces $\phi:V_{n,d}\hookrightarrow MV_{n,d}.$

Combinatorially, the space $V_{n,d}$ has a basis given by the set $\{m_{\mathbf{r}}:\mathbf{r}\in S_{row}\}$, and each of these basis vectors can be viewed as an $n\times d$ matrix. In terms of this basis, the map $\phi$ is given by:
\[\phi(m_{\mathbf{r}})=\sum_{\mathbf{j}\in J_{\mathbf{r}}}m_{\mathbf{r},\mathbf{j}}.\]
That is, the map $\phi$ sends an $n\times d$ matrix to a sum over all possible valid decorations of that matrix.
\begin{lemma}\label{lem:phi-pullback}
Let $\alpha\in\CC(\ff(n,d)\times \ff(n,d)\times \FF_q^d)^{G}$, $\gamma\in\CC(\ff(n,d)\times \ff(d))^{G}$, and $\beta\in\CC(\ff(d)\times \ff(d)\times \FF_q^d)^{G}$, then
$$\phi(\pi_1(\alpha)*\gamma)=\alpha*\phi(\gamma),\quad \phi(\gamma*p(\beta))=\phi(\gamma)*\beta.$$
\end{lemma}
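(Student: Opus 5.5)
Both identities are direct computations with the convolution formulas, using only that $\phi(\gamma)$ does not depend on the vector coordinate. We work at the level of the $\CC$-valued functions for a fixed $q$ and then pass to the generic $K$-spaces. The structure constants on both sides are polynomial in $q$, so an identity that holds for infinitely many (indeed all) prime powers $q$ holds generically.

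\textbf{The left identity.} We evaluate both sides at a triple $(F,F',u)\in\ff(n,d)\times\ff(d)\times\FF_q^d$. By the convolution formula \eqref{eq:convo}, summing over the middle partial flag,
\[(\alpha*\phi(\gamma))(F,F',u)=\sum_{F''\in\ff(n,d)}\sum_{u'\in\FF_q^d}\alpha(F,F'',u')\,\phi(\gamma)(F'',F',u-u').\]
Since $\phi(\gamma)(F'',F',u-u')=\gamma(F'',F')$ is independent of $u-u'$, the inner sum over $u'$ involves only $\alpha$. By \eqref{eq:conv-surj} it equals $\pi_1(\alpha)(F,F'')$. The expression therefore becomes
\[\sum_{F''}\pi_1(\alpha)(F,F'')\gamma(F'',F')=(\pi_1(\alpha)*\gamma)(F,F'),\]
using the non-mirabolic convolution \eqref{eq:convo1}. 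By definition of $\phi$, this is $\phi(\pi_1(\alpha)*\gamma)(F,F',u)$.

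\textbf{The right identity.} The computation is symmetric. We have
\[(\phi(\gamma)*\beta)(F,F',u)=\sum_{F''\in\ff(d)}\sum_{u'}\gamma(F,F'')\,\beta(F'',F',u-u').\]
As $u'$ ranges over $\FF_q^d$, so does $u-u'$. Hence $\sum_{u'}\beta(F'',F',u-u')=p(\beta)(F'',F')$, where $p$ is given by the same formula as \eqref{eq:conv-surj}. The expression is then $(\gamma*p(\beta))(F,F')$, which equals $\phi(\gamma*p(\beta))(F,F',u)$.

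\textbf{Main obstacle.} The only real subtlety is the convention in the bimodule convolution: which factor carries the vector $u'$ and which carries $u-u'$. With the convention of \eqref{eq:convo}, the left action puts $u'$ on $\alpha$, and the right action puts $u-u'$ on $\beta$. In each case the sum over the vector variable of the non-$\phi$ factor is a full sum over $\FF_q^d$, because translation is a bijection. So the argument is insensitive to the convention, and no normalization factors enter.

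Finally, one must check that $\phi$ lands in $G$-invariant functions, which holds because $\gamma$ is invariant and $G$ acts linearly on $u$. Genericity then yields the identities on $V_{n,d}$ and $MV_{n,d}$.
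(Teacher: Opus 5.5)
Your proof is correct and is essentially the paper's argument: both identities follow by unwinding the convolution and using that $\phi(\gamma)$ does not depend on the vector coordinate, so the sum over $u'$ collapses to $\pi_1(\alpha)$, or to $p(\beta)$ after the change of variables $u'\mapsto u-u'$. The paper writes out only the first identity, with the chain of equalities running in the opposite direction, and calls the second analogous; your explicit treatment of the right action and the genericity remark are harmless additions.
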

\begin{proof}
We prove the first equality, the second one is entirely analogous.
\begin{align*}
\phi(\pi_1(\alpha)*\gamma)(F,F',u)&=(\pi_1(\alpha)*\gamma)(F,F') \\
&= \sum_{F''\in \ff(n,d)}\pi_1(\alpha)(F,F'')\gamma(F'',F') \\
&= \sum_{F''\in\ff(n,d),~u'\in\FF_q^d}\alpha(F,F'',u')\gamma(F'',F') \\
&= \sum_{F''\in\ff(n,d),~u'\in\FF_q^d}\alpha(F,F'',u')\phi(\gamma)(F'',F',u-u') \\
&= (\alpha*\phi(\gamma))(F,F',u).
\end{align*}
\end{proof}
\begin{proposition} \label{prop:s=0}
Let $\lambda$ be any partition of $d$ with at most $n$ parts and let $\bs\lambda=(\lambda,\emptyset)$. Then, the isotypic component of the $MU(n)$-module $L_{\bs\lambda}$ in $MV_{n,d}$ is non-zero and is equal to the isotypic component of the $MH_d$-module $M^{\bs\lambda}$.
\end{proposition}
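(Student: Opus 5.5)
The plan is to transport Jimbo's quantum Schur--Weyl duality (Theorem~\ref{thm:QSWD}) into $MV_{n,d}$ along the embedding $\phi:V_{n,d}\hookrightarrow MV_{n,d}$. By Lemma~\ref{lem:phi-pullback}, $\phi$ intertwines the two structures. On the left, $MU(n)$ acts on $\phi(V_{n,d})$ through $\pi_1:MU(n)\to\uvsln$ (using the commutative square of Lemma~\ref{lem:conv-pi1}). On the right, $MH_d$ acts through $p:MH_d\to H_d$. Hence $\phi(V_{n,d})$ is an $(MU(n),MH_d)$-sub-bimodule of $MV_{n,d}$ isomorphic to $\pi_1^*\otimes p^*$ of $V_{n,d}$.

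By Theorem~\ref{thm:QSWD}, this sub-bimodule decomposes as
\[\phi(V_{n,d})\cong\bigoplus_{\lambda\in\Lambda_{n,d}}\pi_1^*(L_\lambda)\otimes p^*(M^\lambda).\]
Remark~\ref{rem:pi0-pi1} gives $\pi_1^*(L_\lambda)\simeq L_{(\lambda,\emptyset)}$, and Remark~\ref{rem:p-proj}(1) gives $p^*(M^\lambda)=M^{(\lambda,\emptyset)}$. For $\lambda$ with at most $n$ parts, the summand $L_{(\lambda,\emptyset)}\otimes M^{(\lambda,\emptyset)}$ is nonzero and sits inside $MV_{n,d}$. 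This proves the non-vanishing claim.

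For equality of the isotypic components, I will use the abstract decomposition \eqref{eq:DCT}. Because of the double centralizer property, $MV_{n,d}=\bigoplus_{\bs\mu\in\Lambda}L_{\bs\mu}\otimes M^{\bs\mu}$ with the $L_{\bs\mu}$ pairwise non-isomorphic and the $M^{\bs\mu}$ pairwise non-isomorphic. In this decomposition the isotypic component of a simple $MU(n)$-module $L$ is a single summand $L\otimes M'$, and it equals the isotypic component of $M'$. It therefore suffices to show that the summand containing $L_{(\lambda,\emptyset)}$ has right factor $M^{(\lambda,\emptyset)}$.

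The sub-bimodule $L_{(\lambda,\emptyset)}\otimes M^{(\lambda,\emptyset)}$ found above is a simple bimodule, being a tensor product of simples over a semisimple setting. It therefore lies in the bimodule-isotypic component $L_{\bs\mu}\otimes M^{\bs\mu}$ for which $L_{\bs\mu}\cong L_{(\lambda,\emptyset)}$ and $M^{\bs\mu}\cong M^{(\lambda,\emptyset)}$. Restricting to the left action, it lies in the $L_{(\lambda,\emptyset)}$-isotypic part, which is a single summand $L_{(\lambda,\emptyset)}\otimes M'$. Restricting to the right action, it lies in the $M^{(\lambda,\emptyset)}$-isotypic part. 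Since a simple bimodule in $L\otimes M'$ must be of the form $L\otimes(\text{simple in }M')$, this forces $M'\cong M^{(\lambda,\emptyset)}$. The two isotypic components then coincide.

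The main obstacle is the first step: checking that the geometric maps $\pi_1$, $p$ and $\phi$ really realize the combinatorial pullbacks of Remarks~\ref{rem:pi0-pi1} and \ref{rem:p-proj}. For $\pi_1$ this is exactly Lemma~\ref{lem:conv-pi1}. For $p$, the same summation argument shows $p(e)=1$, as noted before Lemma~\ref{lem:phi-pullback}. The remaining point is that the equalities of Lemma~\ref{lem:phi-pullback}, proved at $v=\sqrt q$ with polynomial structure constants, pass to the generic $K$-forms. I would handle this by noting that all structure constants involved are polynomial in $q$ and agree for infinitely many $q$, so they agree as polynomials in $v$.
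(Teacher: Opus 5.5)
Your proposal is correct and follows the paper's proof essentially step for step. The paper likewise makes $\im\phi$ an $(MU(n),MH_d)$-sub-bimodule using Lemma~\ref{lem:phi-pullback} and Lemma~\ref{lem:conv-pi1}, identifies it with $\bigoplus_{\lambda\in\Lambda_{n,d}}L_{(\lambda,\emptyset)}\otimes M^{(\lambda,\emptyset)}$ via Theorem~\ref{thm:QSWD} and Remarks~\ref{rem:pi0-pi1} and~\ref{rem:p-proj}(1), and then invokes \eqref{eq:DCT}; your argument that the summand $L_{(\lambda,\emptyset)}\otimes M'$ is a simple bimodule, which forces $M'\cong M^{(\lambda,\emptyset)}$, just spells out what the paper compresses into ``as a consequence of \eqref{eq:DCT}''.
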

\begin{proof}
We consider the embedding $\phi:V_{n,d}\hookrightarrow MV_{n,d}$. By Lemma \ref{lem:phi-pullback}, we have that $\im\phi\subset MV_{n,d}$ is invariant under the actions of $MU(n)$ on the left and of $MH_d$ on the right. Further, again by Lemma \ref{lem:phi-pullback}, the action of $MU(n)$ (resp. $MH_d$) on $\im\phi$ factors through the action of $\uvsln$ (resp. $H_d$) on $V_{n,d}$, via the projection $\pi_1$ (resp. $p$). So, from Theorem \ref{thm:QSWD}, together with Remarks \ref{rem:pi0-pi1} and \ref{rem:p-proj}(1), we get the isomorphism of $(MU(n),MH_d)$-bimodules
$$\im\phi\simeq \bigoplus_{\lambda\in\Lambda_{n,d}} L_{(\lambda,\emptyset)}\otimes M^{(\lambda,\emptyset)}.$$
This isomorphism shows that for every partition $\lambda\in \Lambda_{n,d}$, the space $L_{(\lambda,\emptyset)}\otimes M^{(\lambda,\emptyset)}$ is a submodule of $MV_{n,d}$ as an $(MU(n),MH_d)$-bimodule. Therefore, as a consequence of~\eqref{eq:DCT}, it must be the isotypic component of $L_{(\lambda,\emptyset)}$ in $MV_{n,d}$ with respect to the $MU(n)$-action, completing the proof of the proposition.
\end{proof}

Fix $0\leq k\leq n$ and $0\leq t\leq d$, and define the subspace $MV_{n,d}^{k,t}\subseteq MV_{n,d}$, whose specialization at $v=\sqrt{q}$ is given by
\[MV_{n,d}^{k,t}=\{\gamma\in\CC(\ff(n,d)\times \ff(d) \times \FF_q^d)^{G}~|~\gamma(F,F',u)=0 ~\text{if}~F_k\cap F_t'\neq 0~\text{or}~ u\not\in F_t'\}.\]
In terms of $n\times d$ matrices, the space $MV_{n,d}^{k,t}$ is spanned by $m_{\bfr,\bfj}$ for all decorated matrices whose top-left $k\times t$ submatrix is zero and the only decorations are in the first $t$ columns. Note that $MV_{n,d}^{0,d}=MV_{n,d}$, whereas $MV_{n,d}^{n,0}=i(V_{n,d})$, where $i:V_{n,d}\hookrightarrow MV_{n,d}$ is the embedding given at the $v=\sqrt{q}$ specialization via:
\[i:\CC(\ff(n,d)\times \CC(\ff(d))^{G}\ra \CC(\ff(n,d)\times \CC(\ff(d)\times \FF_q^d)^{G},\]
where, for any $\alpha\in \CC(\ff(n,d)\times \CC(\ff(d))^{G},$ we define:
\begin{equation}\label{eq:conv-embed}i(\alpha)(F,F',u) = \begin{cases} \alpha(F,F') & \text{if } u=0,\\
0 & \text{if } u\neq0.
\end{cases}\end{equation}
Note that the embedding $i$ is not the same as $\phi$.


Define a map $\phi_{n,d}^{k,t}:MV_{n,d}^{k,t} \to MV_{n,d}$ which is given at the level of the $v=\sqrt{q}$ specialization as follows. For any $\gamma\in MV_{n,d}^{k,t}$, we define:
\begin{equation}\label{eq:phindkt}
\phi_{n,d}^{k,t}(\gamma)(F,F',u):=\begin{cases}\gamma(F,F',u'')& \text{if }F_k\cap F_t'=0~\text{and}~u\in F_k\oplus F_t'\\0 &\text{otherwise},\end{cases}
\end{equation}
where $u=(u',u'')\in F_k\oplus F_t'$. Note that the map $\phi_{n,d}^{0,d}$ is the identity map and $\phi=\phi^{n,0}_{n,d}\circ i$.

In terms of $n\times d$ matrices, the map $\phi_{n,d}^{k,t}$ can be described as follows. For any subset $S\sub \{1,2,\dots,n\}$, we define the set:
\begin{equation}\label{eq:jrs-def}
J_{\bfr}(S)=\{\bfj\in J_{\bfr}~|~\forall j\in\bfj~\text{and}~s\in S,~r_j< s\}.\end{equation}
For ease of notation, we will use $J_{\bfr}(k)$ to denote the set $J_{\bfr}(S)$ when $S=\{k\}$.
That is, $J_{\bfr}(k)$ is the set of all the possible decorations of the matrix $m_\bfr$ such that the decorations 
potentially exist only in the top $k-1$ rows. Then, the map $\phi_{n,d}^{k,t}$ is given by:
\begin{equation}\label{eq:phi-kt}
\phi_{n,d}^{k,t}: MV_{n,d}^{k,t}\to MV_{n,d},\qquad m_{\bfr,\bfj}\mapsto \sum_{\widetilde\bfj\in J_{\bfr}(k+1)} m_{\bfr,\bfj\cup\widetilde\bfj}.
\end{equation}
Note that the only decorations in $m_{\bfr,\bfj}$ are in the first $t$ columns and the bottom $n-k$ rows (since the top-left $k\times t$ submatrix is zero). Therefore, $\bfj\cup \widetilde\bfj$ is a valid decoration.

\begin{lemma} \label{lem:averows}
The space $MV_{n,d}^{k,t}$ is invariant under the left action of $e_i\in MU(n)$ when $i\neq k$ and the right action of $\tau_j\in MH_d$ when $1\leq j\neq t$. Furthermore, the map $\phi_{n,d}^{k,t}$ is equivariant with respect to these left and right actions.
\end{lemma}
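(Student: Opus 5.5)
The most transparent route is geometric: work at the specialization $v=\sqrt{q}$ with the convolution description, and pass to the generic $K$-spaces at the end. All structure constants are polynomial in $q$, so identities that hold for all $q$ lift. The element $e_i$ (for $i\neq k$) is a weighted sum of characteristic functions $e_i^{d_1,\ldots,d_n}$ with zero vector component. Likewise $\tau_j$ (for $j\neq t$, $j\geq1$) is the characteristic function of an orbit with zero vector component. Convolution with either therefore only changes one step of one flag, and never shifts $u$.

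\textbf{Invariance.} Take $\gamma\in MV_{n,d}^{k,t}$ and compute
\[(e_i*\gamma)(F,F',u)=\sum_{F''}c\,\gamma(F'',F',u),\]
where $F''$ runs over partial flags with $F''_m=F_m$ for $m\neq i$, $F''_i\subset F_i$ of codimension one. Since $i\neq k$, we have $F''_k=F_k$. So whenever $F_k\cap F'_t\neq0$ or $u\notin F'_t$, every summand vanishes. Hence $e_i*\gamma\in MV_{n,d}^{k,t}$.

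The right action is symmetric. Writing
\[(\gamma*\tau_j)(F,F',u)=\sum_{F''}\gamma(F,F'',u),\]
the flag $F''$ differs from $F'$ only in step $j\neq t$, so $F''_t=F'_t$ and the same vanishing argument applies.

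\textbf{Equivariance.} I need to check, for instance, $\phi^{k,t}_{n,d}(e_i*\gamma)=e_i*\phi^{k,t}_{n,d}(\gamma)$. Evaluate both sides at $(F,F',u)$.

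The right-hand side is
\[\sum_{F''}c\,\phi(\gamma)(F'',F',u).\]
Since $F''_k=F_k$, the conditions $F''_k\cap F'_t=0$ and $u\in F''_k\oplus F'_t$ coincide with the conditions for $F$. Moreover, the decomposition $u=(u',u'')$ depends only on $F_k$ and $F'_t$, so the component $u''$ is the same for every $F''$. Each summand is then $c\,\gamma(F'',F',u'')$ or $0$ uniformly, and the sum equals the left-hand side.

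For $\tau_j$ the argument is identical, using $F''_t=F'_t$. The weights $v^{1-d_i}$ and the degree-dependent coefficients depend only on dimension vectors, which match on both sides.

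\textbf{Main obstacle.} The main subtlety is the case where $e_i$ changes $F_i$ with $i<k$ or $i>k$ but not $F_k$. This is fine because only $F_k$ enters the definition of $\phi^{k,t}_{n,d}$. One must also make sure that $\tau_j$ for $j=0$ is excluded, as the statement does by requiring $1\leq j$. Optionally, I would cross-check the argument combinatorially against \eqref{eq:phi-kt} on small matrices, but the geometric proof suffices.
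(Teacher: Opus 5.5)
Your proposal is correct and is essentially the paper's own argument. You work at $v=\sqrt{q}$ and use that $e_i$ and $\tau_j$ (for $j\geq 1$) are supported on triples with zero vector component that fix $F_k$ (resp.\ $F'_t$) when $i\neq k$ (resp.\ $j\neq t$), so both the vanishing conditions defining $MV_{n,d}^{k,t}$ and the decomposition $u=(u',u'')\in F_k\oplus F'_t$ are unchanged under convolution; the paper does exactly this, writing out the $e_i$ case and treating $\tau_j$ as analogous.
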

\begin{proof}
This result can be proved using a basis of $MV_{n,d}^{k,t}$, together with \eqref{eq:phi-kt} and the formulae for the action on $MV_{n,d}$ from \cite{FZM}. However we offer a more conceptual proof using convolutions. First of all, we remark that, using the formulae for the generators in Section \ref{sec:conv}, the element $e_i$ in the convolution algebra is 
such that $e_i(F,F',u)=0$ unless $F_j=F_j'$ for all $j\neq i$, and $u=0$. Similarly, we have that $\tau_j\in MH_d$ 
satisfies $\tau_j(F,F',u)=0$ unless $F_i=F_i'$ for all $j\neq i$, and $u=0$.

Suppose $\gamma\in MV_{n,d}^{k,t}$, and $i\neq k$, then
\begin{align*}(e_i*\gamma)(F,F',u)&=\sum\limits_{F''\in \ff(n,d),~u'\in \FF_q^d}e_i(F,F'',u')\gamma(F'',F',u-u')\\
&= \sum\limits_{F''\in \ff(n,d)}e_i(F,F'',0)\gamma(F'',F',u) \\
&= \begin{cases} \sum\limits_{F''\in \ff(n,d)}e_i(F,F'',0)\gamma(F'',F',u) & \text{ if }F''_k\cap F'_t=0~\text{ and }u\in F'_t,\\
 0 & \text{ otherwise }\end{cases} \\
&= \begin{cases} \sum\limits_{F''\in \ff(n,d)}e_i(F,F'',0)\gamma(F'',F',u) & \text{ if }F_k\cap F'_t=0~\text{ and }u\in F'_t,\\
 0 & \text{ otherwise,}\end{cases}\end{align*}
showing that $e_i*\gamma\in MV_{n,d}^{k,t}$. Similarly, if $j\neq t$ then
\begin{align*}(\gamma*\tau_j)(F,F',u)&=\sum\limits_{F''\in \ff(d),~u'\in \FF_q^d}\gamma(F,F'',u')\tau_j(F'',F',u-u')\\
&= \sum\limits_{F''\in \ff(d)}\gamma(F,F'',u)\tau_j(F'',F',0) \\
&= \begin{cases} \sum\limits_{F''\in \ff(d)}\gamma(F,F'',u)\tau_j(F'',F',0) & \text{ if }F_k\cap F''_t=0~\text{ and }u\in F''_t,\\
 0 & \text{ otherwise }\end{cases} \\
&= \begin{cases} \sum\limits_{F''\in \ff(d)}\gamma(F,F'',u)\tau_j(F'',F',0) & \text{ if }F_k\cap F'_t=0~\text{ and }u\in F'_t,\\
 0 & \text{ otherwise,}\end{cases}\end{align*}
showing that $\gamma*\tau_j\in MV_{n,d}^{k,t}$. We also have
\begin{align*}(e_i*\phi_{n,d}^{k,t}(\gamma))(F,F',u)&=\sum\limits_{F''\in \ff(n,d),~u'\in \FF_q^d}e_i(F,F'',u')\phi_{n,d}^{k,t}(\gamma)(F'',F',u-u')\\
&=\sum\limits_{F''\in \ff(n,d)}e_i(F,F'',0)\phi_{n,d}^{k,t}(\gamma)(F'',F',u)\\
&= \begin{cases} \sum\limits_{F''\in \ff(n,d)}e_i(F,F'',0)\gamma(F'',F',u'') & \text{ if }F''_k\cap F'_t=0~\text{ and }u\in F''_k\oplus F'_t,\\
 0 & \text{ otherwise }\end{cases}\\
 &(\text{where } u=(u',u'')\in F''_k\oplus F'_t)\\
 &= \begin{cases} \sum\limits_{F''\in \ff(n,d)}e_i(F,F'',0)\gamma(F'',F',u'') & \text{ if }F_k\cap F'_t=0~\text{ and }u\in F_k\oplus F'_t,\\
 0 & \text{ otherwise }\end{cases}\\
 &= \begin{cases} (e_i*\gamma)(F,F',u'') & \text{ if }F_k\cap F'_t=0~\text{ and }u\in F_k\oplus F'_t,\\
 0 & \text{ otherwise }\end{cases}\\
&= \phi_{n,d}^{k,t}(e_i*\gamma)(F,F',u)
 \end{align*}
which shows that $\phi_{n,d}^{k,t}$ intertwines the action of $e_i$ for $i\neq k$. The computation to show that $\phi_{n,d}^{k,t}(\gamma*\tau_j)=(\phi_{n,d}^{k,t}(\gamma))*\tau_j$ for $j\neq t$ is completely analogous and will be omitted.
\end{proof}

\begin{corollary} \label{lem:averowssl}
Let $1\leq k\leq n-1$, the elements $1-\ell, (1-\ell)e_1, (1-\ell)e_1e_2, \dots, (1-\ell)e_1e_2\dots e_{k-1}$ of the mirabolic quantum group $MU(n)$ act by zero on $\textrm{Im}(\phi_{n,d}^{k,t})\sub MV_{n,d}$.
Consequently, $\textrm{Im}(\phi_{n,d}^{k,t})$ lies in the span of the isotypic components of those $L_{\bs\lambda}$'s that have co-depth at least $k$ (or depth at most $n-k$ by Lemma~\ref{lem:oldepth}).
\end{corollary}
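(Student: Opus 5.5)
The plan is to use the equivariance of Lemma~\ref{lem:averows} to reduce everything to a single computation on $MV^{k,t}_{n,d}$ itself. Fix $0\leq j\leq k-1$ and $\gamma\in MV^{k,t}_{n,d}$. Since $j\leq k-1<k$, each of $e_1,\dots,e_j$ has index different from $k$. Lemma~\ref{lem:averows} therefore gives
\[e_1\cdots e_j\cdot\phi^{k,t}_{n,d}(\gamma)=\phi^{k,t}_{n,d}(e_1\cdots e_j\cdot\gamma),\]
and $e_1\cdots e_j\cdot\gamma\in MV^{k,t}_{n,d}$. It therefore suffices to show that $(1-\ell)$ acts by zero on $\textrm{Im}(\phi^{k,t}_{n,d})$. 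Equivalently, $\ell\cdot\phi^{k,t}_{n,d}(\gamma')=\phi^{k,t}_{n,d}(\gamma')$ for every $\gamma'\in MV^{k,t}_{n,d}$.

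For this identity I would work directly with convolution at $v=\sqrt q$, and use the description $\ell=\sum v^{-2d_1}\ell^{d_1,\dots,d_n}$:
\[(\ell*\phi^{k,t}_{n,d}(\gamma'))(F,F',u)=q^{-\dim F_1}\sum_{u'\in F_1}\phi^{k,t}_{n,d}(\gamma')(F,F',u-u').\]
Since $k\geq1$, we have $F_1\subseteq F_k$. By \eqref{eq:phindkt}, the value $\phi^{k,t}_{n,d}(\gamma')(F,F',w)$ is nonzero only if $F_k\cap F'_t=0$ and $w\in F_k\oplus F'_t$. In that case it depends only on the $F'_t$-component of $w$.

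Now take $w=u-u'$ with $u'\in F_1\subseteq F_k$. Then $w\in F_k\oplus F'_t$ exactly when $u\in F_k\oplus F'_t$, and $w$ has the same $F'_t$-component $u''$ as $u$. Hence every one of the $q^{\dim F_1}$ summands equals $\phi^{k,t}_{n,d}(\gamma')(F,F',u)$, and the normalising factor $q^{-\dim F_1}$ cancels the count. This gives $\ell*\phi^{k,t}_{n,d}(\gamma')=\phi^{k,t}_{n,d}(\gamma')$ at every specialization $v=\sqrt q$. The identity then holds generically, because all structure constants are polynomial in $q$, exactly as in the proof of Lemma~\ref{lem:conv-pi1}. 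Alternatively, one could verify the identity combinatorially with \eqref{eq:laction} and \eqref{eq:phi-kt}: the sum over decorations $\widetilde\bfj\in J_{\bfr}(k+1)$ absorbs the extra decorations in row $1$ produced by $\ell$. The convolution argument is cleaner, though.

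For the consequence, decompose $MV_{n,d}$ into isotypic components $L_{(\lambda,1^r)}$. The operators $(1-\ell)e_1\cdots e_j$ with $j<k$ kill every vector of $\textrm{Im}(\phi^{k,t}_{n,d})$, so they kill its projection to each isotypic component, since these projections commute with the $MU(n)$-action. So every weight-vector component has co-depth at least $k$. Suppose a component $L_{(\lambda,1^r)}$ received a nonzero projection. By Example~\ref{ex:(co-)depth}, a vector in $L_\lambda\otimes W_r$ has co-depth at least $k$ only if it lies in $L_\lambda\otimes\mathrm{span}\{u_I : I\cap\{1,\dots,k\}=\emptyset\}$. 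Such $u_I$ exist only when $r\leq n-k$, that is, when the module has co-depth $n-r\geq k$. Lemma~\ref{lem:oldepth} then converts this into depth at most $n-k$.

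The main obstacle I expect is this last step. Co-depth as defined is a maximum over weight vectors, and a general vector with vanishing $(1-\ell)e_1\cdots e_j$ need not be a weight vector. I would handle this by noting that $\textrm{Im}(\phi^{k,t}_{n,d})$ is stable under the $k_i$, which act diagonally on the basis, so it is spanned by weight vectors. I would then argue componentwise inside $L_\lambda\otimes W_r$ using the explicit tensor structure.
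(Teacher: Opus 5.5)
Your proposal is correct and follows the paper's proof. It uses the same convolution computation: $u'\in F_1\subseteq F_k$ leaves the $F'_t$-component unchanged, and $q^{-\dim F_1}$ cancels the count, so $\ell$ acts as the identity on $\im(\phi^{k,t}_{n,d})$. Combined with the $e_i$-equivariance for $i<k$ from Lemma~\ref{lem:averows}, this gives the vanishing. Your extra care in the ``consequently'' step fills in details the paper leaves implicit: you note that the image is spanned by weight vectors, and you check co-depth componentwise in $L_\lambda\otimes W_r$ rather than only on pure tensors.
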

\begin{proof}

Recall that $\ell=\sum_{d_1,\ldots,d_n}v^{-2d_1}\ell^{d_1,\ldots,d_n}$ as discussed in Section \ref{sec:conv}. 
Suppose that $\gamma\in MV^{k,t}_{n,d}$, 
so that $\gamma(F,F',u)= 0$ unless $F_k\cap F_t'= 0$ and $u\in F_t'$.
\begin{align*}
&(\ell*\phi^{k,t}_{n,d}(\gamma))(F,F',u)\\
&=\sum_{F''\in \ff(n,d),~u'\in\FF_q^d}\ell(F,F'',u')\phi^{k,t}_{n,d}(\gamma)(F'',F',u-u')\\
&= \sum_{d_1,\ldots,d_n}v^{-2d_1}\sum_{u'\in F_1}\ell^{d_1,\ldots,d_n}(F,F,u')\phi^{k,t}_{n,d}(\gamma)(F,F',u-u')\\
& \text{(since $\ell^{d_1,\ldots,d_n}(F,F'',u')\neq 0$ only if $F''=F$ and $u'\in F_1$) },\\
&=\begin{cases}\sum_{d_1,\ldots,d_n}v^{-2d_1}\sum_{u'\in F_1}\ell^{d_1,\ldots,d_n}(F,F,u')\gamma(F,F',u_2) &\text{if $F_k\cap F'_t=0$ and $u-u'\in F_k\oplus F'_t$} \\
0 & \text{otherwise}\end{cases}\\
&(\text{where } u-u'=(u_1,u_2)\in F_k\oplus F'_t)\\
&=\begin{cases} v^{-2\dim(F_1)}v^{2\dim(F_1)}\gamma(F,F',u_2) & \text{if $F_k\cap F'_t=0$ and $u\in F_k\oplus F'_t$},\\
0 & \text{otherwise} \end{cases}\\
&= \phi_{n,d}^{k,t}(\gamma)(F,F',u)\\
& (\text{since }u=(u_1+u',u_2)\in F_k\oplus F'_t).
\end{align*}
This shows that $\ell$ acts as the identity on $\im(\phi_{n,d}^{k,t})$, and hence $1-\ell$ acts as zero. The rest follows from Lemma \ref{lem:averows}, since $\im(\phi_{n,d}^{k,t})$ is invariant under the action of $e_i$ for $i<k$.
\end{proof}

\subsection{Proof of Mirabolic quantum Schur-Weyl duality as left modules} \label{sec:leftmod}

Recall the weight lattice $\ZZ^n/\sim$ of $\uvsln$ and consider the formal group ring $R:=\ZZ\langle v^{\mu} : \mu\in \ZZ^n/\sim\rangle$. For any finite dimensional $\uvsln$-module $M$, we define its formal character $\chi(M)$:
\[\chi(M):= \sum_{\mu}\dim(M_{\mu})v^{\mu}\in R,\]
where $M_{\mu}$ denotes the weight space of $M$ with weight $\mu.$ (Note that for any $i$, we have $\mathrm{Tr}(k_i|_M) = \sum_{\mu}\dim(M_{\mu})v^{\mu_i-\mu_{i+1}}=\sum_{\mu}\dim(M_{\mu})v^{(\mu,\alpha_i)}$.) By \cite[$\mathsection$ 5A.8]{J}, for any two finite dimensional $\uvsln$-modules $M$ and $N$, we have the equality:
\[\chi(M\otimes N)=\chi(M)\chi(N).\]

In this section, we prove the following proposition, which is the first step towards the proof of Theorem~\ref{thm:MSWD} and is a generalization of \cite[Theorem 6.9]{R15}:
\begin{proposition} \label{prop:leftmod}
As a left $MU(n)$-module, we have a decomposition of the mirabolic tensor space:
\begin{equation} \label{eq:leftmod}
MV_{n,d} = \bigoplus_{\bs\lambda\in M\Lambda_{n,d}} L_{\bs\lambda}\otimes M^{\bs\lambda}.
\end{equation}
Here, $M\Lambda_{n,d}=\{\bs\lambda=(\lambda,1^s): |\lambda|+s=d, s\leq n \text{ and } \lambda \text { has at most } n \text{ parts}\}$.
\end{proposition}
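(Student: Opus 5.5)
The plan is to compare mirabolic weight-space dimensions on the two sides and then invoke Proposition~\ref{prop:weightspace}. The left-hand side is $MV_{n,d}$. The right-hand side is the $MU(n)$-module $\bigoplus_{\bs\lambda\in M\Lambda_{n,d}} L_{\bs\lambda}^{\oplus \dim M^{\bs\lambda}}$, where $\dim M^{(\lambda,1^s)}=\binom{d}{s}f^\lambda$. By Proposition~\ref{prop:weightspace}, it suffices to show that for every weight $\mu$ and every $\epsilon\in\{0,1\}$ the $(\mu,\epsilon)$ mirabolic weight spaces of the two sides have the same dimension.

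Since $\ell$ is an idempotent commuting with the $k_i$, the $\epsilon=1$ space is $\im(\ell)_\mu$ and the $\epsilon=0$ space is $\ker(\ell)_\mu$. It therefore suffices to match two characters: the full $\uvsln$-character and the character of $\im(\ell)$. I will work with $\gl_n$-type monomials $x^{\mu}=x_1^{\mu_1}\cdots x_n^{\mu_n}$ of total degree $d$. Every weight that occurs on either side lifts uniquely to such a monomial: on the left, $\mu_i=\#\{j: r_j=i\}$ by \eqref{eq:kaction}; on the right, $\lambda+\omega_I$ has $|\lambda|+s=d$. Hence the passage to $R$ loses no information.

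\emph{Left side.} By \eqref{eq:kaction}, $m_{\bfr,\bfj}$ is a weight vector of weight $x^{\bfr}:=\prod_j x_{r_j}$.
\begin{itemize}
\item Full character: group the basis $\{m_{\bfr,\bfj}\}$ by the set $\bfj$ of decorated columns, with $\#\bfj=s$. There are $\binom{d}{s}$ choices of these columns. The rows of the decorated columns must be distinct and strictly decreasing, so they are determined by an $s$-subset of $\{1,\dots,n\}$, contributing $e_s(x_1,\dots,x_n)$. The remaining $d-s$ columns are arbitrary and contribute $p_1^{d-s}$, where $p_1=x_1+\cdots+x_n$. The full character is therefore $\sum_s\binom{d}{s}p_1^{d-s}e_s(x_1,\dots,x_n)$.
\item Image of $\ell$: by Corollary~\ref{cor:lconse}, $\im(\ell)$ has basis $\ell\cdot m_{\bfr,\bfj}$ with no decoration in row $1$. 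Each of these is a weight vector of weight $x^{\bfr}$, because $\ell$ commutes with the $k_i$. The same count restricted to rows $2,\dots,n$ gives $\sum_s\binom{d}{s}p_1^{d-s}e_s(x_2,\dots,x_n)$.
\end{itemize}

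\emph{Right side.} We have $\chi(L_{(\lambda,1^s)})=s_\lambda\, e_s(x_1,\dots,x_n)$, by multiplicativity of characters and $\chi(W_s)=e_s$. On $L_\lambda\otimes W_s$, $\ell$ acts as $1\otimes\ell$ by the comodule map $\rho$, so by Definition~\ref{def:W_r} the image of $\ell$ is $L_\lambda\otimes\mathrm{span}\{u_I:1\notin I\}$, with character $s_\lambda\, e_s(x_2,\dots,x_n)$. Summing with multiplicities $\binom{d}{s}f^\lambda$ and using classical Schur--Weyl duality (Theorem~\ref{thm:QSWD}) in the form $\sum_{\lambda\vdash m,\ \ell(\lambda)\le n} f^\lambda s_\lambda=p_1^{m}$, we get:
\begin{itemize}
\item full character $\sum_{s\le n}\binom{d}{s}p_1^{d-s}e_s(x_1,\dots,x_n)$;
\item $\im(\ell)$ character $\sum_{s}\binom{d}{s}p_1^{d-s}e_s(x_2,\dots,x_n)$.
\end{itemize}
The constraint $s\le n$ in $M\Lambda_{n,d}$ is automatic, since $e_s=0$ for $s>n$. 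These agree with the left side, so the $\ker(\ell)$ characters agree as well, and the proposition follows.

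The step needing most care is the identification of $\im(\ell)$ on both sides as a weight-graded space. On the left, this rests on Corollary~\ref{cor:lconse} giving a genuine basis of $\im(\ell)$ consisting of weight vectors. On the right, it rests on checking via $\rho$ that $\ell$ acts only on the $W_s$ tensor factor.
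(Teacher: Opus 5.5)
Your proposal is correct and is essentially the paper's own argument. You match the full $\uvsln$-characters using the Schur--Weyl identity $\sum_\lambda f^\lambda\chi(L_\lambda)=\chi(V_{n,m})$, refine to the $\ell=1$ part using the basis of $\operatorname{Im}(\ell)$ from Corollary~\ref{cor:lconse} on the left and $L_\lambda\otimes\operatorname{span}\{u_I: 1\notin I\}$ on the right, and conclude with Proposition~\ref{prop:weightspace}. Your bookkeeping with degree-$d$ monomials, $p_1^{d-s}$ and $e_s$ is a tidier form of the paper's sums over $\eps\in\{0,1\}^n$, and your remark that $\ell$ acts as $1\otimes\ell$ via $\rho$ makes the right-hand $\ell=1$ count a bit more explicit than the paper does.
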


\begin{proof}
By Proposition~\ref{prop:weightspace}, it suffices to verify that the two sides of Equation~\eqref{eq:leftmod} have the same mirabolic weight spaces. 

In order to show that the two sides of~\eqref{eq:leftmod} have the same $k_i$-weight spaces, it suffices to show that their characters as $\uvsln$-modules are equal. To this end, we recall the decomposition from the usual quantum Schur-Weyl duality (see Theorem~\ref{thm:QSWD}):
\[V_{n,d} = \bigoplus_{\lambda\in\Lambda_{n,d}} L_{\lambda}\otimes M^{\lambda},\]
where $\Lambda_{n,d}$ is the set of all partitions $\lambda$ of $d$ with at most $n$ parts. Equating the characters $\chi$ of both sides, we have the equality:
\begin{equation} \label{eq:chillam}
\chi(V_{n,d})=\sum_{\lambda\in \Lambda_{n,d}}\chi(L_{\lambda})f^{\lambda},
\end{equation}
where $f^{\lambda}=\dim(M^{\lambda})$. Since the space $V_{n,d}$ is spanned by $n\times d$ matrices $m_{\bfr,\emptyset}$ for $\bfr\in S_{row}$, we can compute the character on the left explicitly. By~\eqref{eq:kaction}, the matrix $m_{\bfr,\emptyset}$ has weight $\mu=(\mu_1,\mu_2,\dots,\mu_n)\in \ZZ^n/\sim$ with respect to the $k_i$'s, where $\mu_1+\mu_2+\cdots+\mu_n=d$, when there are exactly $\mu_i$ 1's in the $i^{th}$ row, and so, the number of such matrices is $\frac{d!}{\mu_1!\mu_2!\cdots\mu_n!}=: {d\choose \mu}$. Therefore, we have:
\begin{equation} \label{eq:dmuv}
\chi(V_{n,d}) = \sum_{|\mu|=d} {d \choose \mu} v^{\mu},\end{equation}
By a similar argument, we can compute the character of the mirabolic tensor space $MV_{n,d}$ using the decorated matrix basis: a decorated matrix $m_{\bfr,\bfj}$ with $k$ decorations and weight $\mu$ can be chosen by deciding which $k$ columns and rows the decorated $1$'s will be in, and then choosing the other $d-k$ columns arbitrarily. Therefore,

\[\chi(MV_{n,d})=\sum_{|\mu|=d}\sum_{k=0}^{\min(n,d)}\sum_{\eps}{d-k \choose \mu-\eps}{d \choose k}  v^{\mu}.\]
Here, the last sum ranges over all $\eps\in\{0,1\}^n$ with exactly $k$ entries equal to 1, such that all the entries of $\mu-\eps$ are non-negative, and corresponds to the rows in which the decorations exist. The above sum can be rewritten as:
\[\chi(MV_{n,d})=\sum_{k=0}^{\min(n,d)}\sum_{|\mu|=d-k}\sum_{\eps}{d-k \choose \mu}{d \choose k}  v^{\mu+\eps},\]
where $\eps$ ranges over the entire set of sequences in $\{0,1\}^n$ with $k$ entries equal to $1$. Then, we can write:
\begin{align*}
\chi(MV_{n,d})&=\sum_{k=0}^{\min(n,d)}\sum_{|\mu|=d-k}\sum_{\eps}{d-k \choose \mu}{d \choose k}  v^{\mu+\eps}\\
&=\sum_{k=0}^{\min(n,d)}{d \choose k}\sum_{|\mu|=d-k}{d-k \choose \mu}  v^{\mu}\sum_{\eps}v^{\eps}\\
&=\sum_{k=0}^{\min(n,d)}{d \choose k}\chi(V_{n,d-k})\sum_{\eps}v^{\eps},
\end{align*}
where the last equality follows from Equation~\eqref{eq:dmuv}. Next, we compute the character of the right side of Equation~\eqref{eq:leftmod}.
\begin{align*}
\chi(\bigoplus_{\bs\lambda\in M\Lambda_{n,d}} L_{\bs\lambda}\otimes M^{\bs\lambda})&= \sum_{\bs\lambda\in M\Lambda_{n,d}} \chi(L_{\bs\lambda})\dim(M^{\bs\lambda})\\
&=\sum_{k=0}^{\min(n,d)}\sum_{\lambda\in\Lambda_{n,d-k}}\chi(L_{\lambda}\otimes W_k)\dim(M^{(\lambda,1^k)})\\
&=\sum_{k=0}^{\min(n,d)}\sum_{\lambda\in\Lambda_{n,d-k}}\chi(L_{\lambda})\chi(W_k)\dim(M^{\lambda}){d \choose k}\\
&=\sum_{k=0}^{\min(n,d)}{d \choose k}\sum_{\lambda\in\Lambda_{n,d-k}}\chi(L_{\lambda})f^{\lambda}\sum_{\eps}v^{\eps}.\\
\end{align*}
Here, we're using the fact that $\chi(W_k)=\sum_{\eps} v^{\eps}$ where $\eps$ ranges over the entire set of sequences in $\{0,1\}^n$ with $k$ entries equal to $1$. The equality of the two character expressions obtained above follows from Equation~\eqref{eq:chillam}.

In order to complete the proof of the proposition, we do a slightly refined version of the above computation in order to compute the character of both sides of Equation~\eqref{eq:leftmod} when restricting to the $\ell=1$ eigenspace. (The $\ell=1$ eigenspace isn't necessarily a $\uvsln$-representation, but it is preserved under the action of the $k_i$'s since they commute with $\ell$.) We get almost the same character expressions for both sides, with the only difference being that $\eps$ ranges over the set of sequences in $\{0,1\}^n$ with $k$ entries equal to $1$ and first entry necessarily being $0$. (This follows by looking at the basis of the $\ell=1$ eigenspace computed in Corollary~\ref{cor:lconse}.) This shows that the $(k_i,\ell=1)$-eigenspaces of both sides of Equation~\eqref{eq:leftmod} have the same dimension, thus completing the proof.
\end{proof}

\section{Proof of Theorem~\ref{thm:MSWD}} \label{sec:MSWDproof}

In section \ref{sec:leftmod}, we showed that the decomposition in~\eqref{eq:MSWD} is true as left $MU(n)$-modules. In particular, this implies that for a given bipartition $\bs\lambda$, the isotypic component of $L_{\bs\lambda}$ in $MV_{n,d}$ is non-zero if and only if $\bs\lambda\in M\Lambda_{n,d}$.

Hence, in order to prove Theorem~\ref{thm:MSWD}, we need to show that each $L_{\bs\lambda}$ indeed pairs with $M^{\bs\lambda}$ in $MV_{n,d}$. That is, as a right $MH_d$-module, the $L_{\bs\lambda}$-isotypic component of $MV_{n,d}$ exactly decomposes into copies of the module $M^{\bs\lambda}$. Note that we have already verified this for bipartitions $\bs\lambda=(\lambda,1^s)$ whenever $s=0$ in Proposition~\ref{prop:s=0}.
We will prove the complete correspondence using a reverse induction on $s$. The base case is proved in Section~\ref{sec:base} and the induction argument is in Section~\ref{sec:ind}.

\subsection{Simultaneous eigenspaces of the mirabolic Jucys-Murphy elements} \label{sec:zeroeigen}
In this section, we prove some results about the simultaneous zero eigenspaces of the mirabolic Jucys-Murphy elements, which will form the technical heart of the induction proof. Fix $\mathbf{r}=(r_1,r_2,\dots,r_d)\in S_{row}.$ Let $\Sigma_{\mathbf{r}}\subseteq J_{\bfr}$ denote the set of \emph{good decorations}, which is defined as
\[\Sigma_{\mathbf{r}}:=\{\bfj\in J_{\bfr}~|~a\in\bfj\implies r_a<r_k,~\forall 1\leq k\leq a-1\}. \]
In particular, if $\mathbf{j}\in\Sigma_{\bfr}$ is a good decoration, then $\{1\}\cup \mathbf{j}\in \Sigma_{\bfr}$. 

Recall that for any permutation $\sigma\in \mathcal{S}_n$, we define $\len(\sigma)$ to be the length of $\sigma$, which is equal to be the number of inversions in $\sigma$.

\begin{lemma} \label{lem:simzero}
Suppose $n\leq d$. Fix $\mathbf{r}=(n, n-1, \dots, 2, 1, r_{n+1},r_{n+2},\cdots,r_d)$, where $1\leq r_i\leq n$. Then, the vector:
\[\sum_{\sigma\in \mathcal{S}_n} \sum_{\mathbf{j}\in \Sigma_{\sigma(\mathbf{r})}} (-v^2)^{-\len(\sigma)}(1-v^2)^{n-|\mathbf{j}|}m_{\sigma(\mathbf{r}),\mathbf{j}},\]
 lies in the simultaneous zero eigenspace of $\LL_1, \LL_2, \dots, \LL_n$. Furthermore, the idempotent $\ell\in MU(n)$ acts on it by zero.
(The action of the symmetric group $\mathcal{S}_n$ on $\bfr$ comes from the embedding $\mathcal{S}_n\times\{1\}\hra \mathcal S_n\times \mathcal S_{d-n}\hra \mathcal S_d$, i.e. it permutes the first $n$ terms.)
\end{lemma}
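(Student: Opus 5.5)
Call the vector in the statement $X$. The plan is to avoid computing the Jucys–Murphy elements $\LL_i$ (which are long words in the $\tau$'s) directly. Instead I would use the recursion
\[\LL_{i+1}=v^{-2}\tau_i\LL_i\tau_i,\]
acting on the right. Since $\tau_i$ is invertible by \eqref{nrel1}, for $i<n$ we get: $X\LL_{i+1}=0$ if and only if $X\tau_i\LL_i=0$. So it is enough to show two things:
\begin{enumerate}
\item $X\cdot e=0$;
\item for each $1\le i\le n-1$, $X\cdot\tau_i=-X$ (antisymmetry).
\end{enumerate}
Granting both, induction gives $X\LL_{i+1}=v^{-2}(-X)\LL_i\tau_i=0$.

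\textbf{Step (2), antisymmetry.} Group $X$ by $\sigma$, i.e.\ by the reordering $\sigma(\bfr)$ of the distinct values $n,\dots,1$ in the first $n$ positions. Fix $i<n$ and pair each $\sigma$ with $s_i\sigma$, choosing the pair so that $\sigma(\bfr)_i>\sigma(\bfr)_{i+1}$.
\begin{itemize}
\item If neither $i$ nor $i+1$ is decorated, the FZM formulas give directly the $2\times2$ matrix of $\tau_i$ on $\{m_{\sigma\bfr,\bfj},m_{s_i\sigma\bfr,s_i\bfj}\}$. Its $(-1)$-eigenvector is $m_{\sigma\bfr}-v^{-2}m_{s_i\sigma\bfr}$, which matches the coefficient ratio $(-v^2)^{-\len}$.
\item The cases with decorations at $i$ and/or $i+1$ need the specific ``good decoration'' structure and the weights $(1-v^2)^{n-|\bfj|}$. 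Here I expect the terms $m_{\bfr,\bfj\cup\{j+1\}}$ and $m_{\bfr,\bfj\setminus\{j\}}$ produced by $\tau_i$ to telescope between $\bfj$ and $\bfj\pm\{i\}$ or $\bfj\pm\{i+1\}$. The set $\Sigma$ is presumably chosen exactly so that it is closed under these moves.
\end{itemize}
I would organise this check by the four FZM cases, working with the prefix-minimum description of $\Sigma_{\sigma\bfr}$. Being a good decoration depends only on prefix minima, and $s_i$ changes prefix minima only at positions $i,i+1$. This locality should make the pairing bijective.

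\textbf{Step (1), $e$-annihilation.} By Corollary \ref{cor:eeigen}, $\ker e$ has basis $m_{\bfr,\bfj}-(v^2-1)m_{\bfr,\bfj\setminus\{1\}}$ with $1\in\bfj$. Since position $1$ is always a prefix minimum, $\bfj\mapsto\bfj\cup\{1\}$ pairs up $\Sigma_{\sigma\bfr}$. The coefficients for $\bfj\ni1$ and $\bfj\setminus\{1\}$ differ by exactly a factor $(1-v^2)$, so $X$ is a combination of these kernel basis vectors, up to the sign convention. This step is immediate.

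\textbf{The $\ell$ statement.} Use \eqref{eq:laction} and Corollary \ref{cor:lconse}. In each $\sigma(\bfr)$ the value $1$ sits at a unique position $p\le n$ among the first $n$. Group the pairs $(\bfj,\bfj\cup\{p\})$ when $p$ can be added to a good decoration; by Corollary \ref{cor:lconse}, $\ell m_{\bfr,\bfj\cup\{p\}}$ is a scalar multiple of $\ell m_{\bfr,\bfj}$. When $p$ is the first occurrence of $1$ the scalar is $(v^2-1)$, which should cancel against the factor $(1-v^2)$ in the coefficients. Occurrences of $1$ later in $\bfr$ (positions $>n$) affect the scalar through $v^{2\#\{\cdots\}}$. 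To handle this I would instead note that $p$ is a prefix minimum of value $1$, so no later entry can be decorated after it. Then $\max(\bfj\setminus\{p\})<p$, and the exponent counts ones strictly between, which is zero because $p$ is the first $1$.

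\textbf{Main obstacle.} I expect the mixed-decoration cases of the antisymmetry in Step (2) to be the real difficulty. If direct pairing fails, I would prove $X\tau_i=-X$ by induction on $n$, restricting to $\mathcal S_{\{i,i+1\}}$-cosets and summing over decoration sets differing only at positions $i,i+1$.
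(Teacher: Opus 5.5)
The reduction is right. Since $\LL_{i+1}=v^{-2}\tau_i\LL_i\tau_i$, it suffices to show $X\cdot e=0$ and $X\cdot\tau_i=-X$ for $1\le i\le n-1$. Your $e$-argument via Corollary~\ref{cor:eeigen} and your $\ell$-argument via Corollary~\ref{cor:lconse} are exactly the paper's; the latter toggles the position $p$ of the first $1$, where the exponent vanishes. The gap is Step (2), which is the real content of the lemma, and the one case you do compute is wrong.

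Write $m_D$ for a summand with $a>b$ in positions $i,i+1$ and no decorations there, and $m_I$ for the swapped one. The formulas of Section~\ref{sec:action-formulae} give $m_D\cdot\tau_i=v^2m_I+(v^2-1)m_D$ and $m_I\cdot\tau_i=m_D$. So the $(-1)$-eigenvector is $m_D-v^2m_I$, not $m_D-v^{-2}m_I$. Your ``match'' with $(-v^2)^{-\len}$ comes from reading $\len(\sigma)$ literally, with $\sigma=\mathrm{id}$ giving the decreasing arrangement and $\len(\mathrm{id})=0$. The correct ratio forces the arrangement with the descent at $(i,i+1)$ to have the \emph{larger} $\len$. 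That is the convention the paper's proof actually uses: $\len(\sigma s_j)=\len(\sigma)-1$ for $\sigma\in\mathcal S_{a,b}$, i.e.\ $\len$ counts inversions among the first $n$ entries of $\sigma(\bfr)$. Under the literal reading, the vector is not in $\ker\LL_2$ already for $n=d=2$. So your check, as written, would certify the wrong vector.

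Second, the decorated cases are left as an expectation, and the guessed mechanism (that $\Sigma$ is closed under the moves) is not what happens. Your fallback organization is the right one; it just has to be carried out. Fix $a>b$ in positions $i,i+1$ and a good decoration $\widetilde\bfj$ avoiding $i,i+1$; by your prefix-minimum remark this choice is common to both arrangements. Write $m_{D,\cdot}$, $m_{I,\cdot}$ for the summands with the indicated decorations in columns $i,i+1$. Then split according to how $a,b$ compare with $\min\{r_1,\dots,r_{i-1}\}$:
\begin{itemize}
\item If the minimum is $<b$, nothing in columns $i,i+1$ can be decorated, and the block is $m_D-v^2m_I$.
\item If it lies between $b$ and $a$, only the entry $b$ can be decorated, giving a four-term block.
\item If it is $>a$, $m_D$ can be decorated at $i$, at $i+1$, or at both, and $m_I$ only at $i$, giving six terms.
\end{itemize}
In each case one applies the mixed-decoration formulas. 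Their side conditions on decorated columns $k>i+1$ hold automatically, because later decorated entries of a good decoration are smaller than $r_i,r_{i+1}$.

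Crucially, $\tau_i$ does not preserve good decorations. In the last case, both $m_{D,\{i\}}\cdot\tau_i$ and $m_{D,\{i,i+1\}}\cdot\tau_i$ produce the non-good term $m_{I,\{i+1\}}$ (the larger value $a$ decorated after $b$). These contributions cancel only because of the weights $(1-v^2)^{n-|\bfj|}$. Until these three block computations are done, $X\cdot\tau_i=-X$ is unproved, and hence so is $X\cdot\LL_k=0$ for $2\le k\le n$.
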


\begin{remark}
The vector defined in the above lemma was first constructed by applying $\mathbf{e}_d$ (from Remark~\ref{rem:TL}) on the element $m_{\mathbf{r},\emptyset}$ of $MV_{d,d}$ for $d=2$.
\end{remark}

\begin{proof}
The fact that $\ell$ acts by zero is an immediate consequence of Corollary~\ref{cor:lconse}, since if $j=\sigma(n)$ is the minimal index in $\sigma(\bfr)$ with $r_j=1$, then both $\bfj,\bfj\setminus\{j\}\in\Sigma_{\sigma(\bfr)}$, and 
\begin{align*}&\ell\cdot\left((1-v^2)^{n-|\mathbf{j}|}m_{\sigma(\mathbf{r}),\mathbf{j}}+(1-v^2)^{n-|\mathbf{j}\setminus\{j\}|}m_{\sigma(\mathbf{r}),\mathbf{j}\setminus\{j\}}\right) \\
&~=(1-v^2)^{n-|\mathbf{j}|}(v^2-1)\ell\cdot  m_{\sigma(\mathbf{r}),\mathbf{j}\sm\{j\}}+(1-v^2)^{n+1-|\mathbf{j}|}\ell \cdot m_{\sigma(\mathbf{r}),\mathbf{j}\sm\{j\}}=0.\end{align*}
To show that the action of the $\LL_i$'s is zero, it suffices to show that $e$ acts by $0$ and $\tau_j$ acts by $-1$ for all $1\leq j\leq n-1$. To prove the former, we note that, since $\bfj\cup\{1\}\in\Sigma_{\sigma(r)}$ for all $\bfj\in\Sigma_{\sigma(r)}$, we have for all $\sigma\in\mathcal{S}_n$:
\begin{align*}
&\sum_{\mathbf{j}\in \Sigma_{\sigma(\mathbf{r})}} (-v^2)^{-\len(\sigma)}(1-v^2)^{n-|\mathbf{j}|}m_{\sigma(\mathbf{r}),\mathbf{j}}\\
=& \sum_{\substack{\mathbf{j}\in \Sigma_{\sigma(\mathbf{r})},\\ 1\in\mathbf{j}}} (-v^2)^{-\len(\sigma)}(1-v^2)^{n-|\mathbf{j}|}m_{\sigma(\mathbf{r}),\mathbf{j}}+ \sum_{\substack{\mathbf{j}\in \Sigma_{\sigma(\mathbf{r})},\\ 1\not\in\mathbf{j}}} (-v^2)^{-\len(\sigma)}(1-v^2)^{n-|\mathbf{j}|}m_{\sigma(\mathbf{r}),\mathbf{j}}\\
=& \sum_{\substack{\mathbf{j}\in \Sigma_{\sigma(\mathbf{r})},\\ 1\in\mathbf{j}}} (-v^2)^{-\len(\sigma)}(1-v^2)^{n-|\mathbf{j}|}\big(m_{\sigma(\mathbf{r}),\mathbf{j}}-(v^2-1)m_{\sigma(\mathbf{r}),\mathbf{j}\sm\{1\}}\big).
\end{align*}
It is clear that $e$ acts by zero on each summand of the above summation by Corollary~\ref{cor:eeigen}, showing that $e$ acts by zero on the whole sum.

Next, we consider the action of $\tau_j$. Fix $1\leq b<a\leq n$ and let $\mathcal S_{a,b}$ be the set of permutations $\sigma$ such that $\sigma(j)=a$ and $\sigma(j+1)=b$. Also, let $s_j$ be the transposition $(j,j+1)$. Then $\mathcal S_{a,b}\cdot s_j$ is the set of those $\sigma$ such that $\sigma(j)=b$ and $\sigma(j+1)=a$. (Notice that for each $\sigma\in\mathcal S_{a,b}$, we have $\len(\sigma s_j)=\len(\sigma)-1$. For each pair $(a,b)$ as above and $\sigma\in\mathcal S_{a,b}$, it suffices to show that $\tau_j$ acts by $-1$ on the sum:
\begin{equation} \label{eq:sumsigma}
\sum_{\mathbf{j}\in \Sigma_{\sigma(\mathbf{r})}} (-v^2)^{-\len(\sigma)}(1-v^2)^{n-|\mathbf{j}|}m_{\sigma(\mathbf{r}),\mathbf{j}}+\sum_{\mathbf{j}\in \Sigma_{\sigma\circ s_j(\mathbf{r})}} (-v^2)^{-\len(\sigma s_j)}(1-v^2)^{n-|\mathbf{j}|}m_{\sigma\circ s_j(\mathbf{r}),\mathbf{j}}.
\end{equation}
We will show this via an explicit computation using the formulae for the $\tau_j$-action from Section~\ref{sec:action-formulae}.

Let $\widetilde{\mathbf{j}}$ be a good decoration of $m_{\sigma(\mathbf{r})}$ such that $j,j+1\not\in\widetilde{\mathbf{j}}$. We consider those summands of the sum~\eqref{eq:sumsigma} such that $\bfj\setminus \widetilde{\bfj}\subset\{j,j+1\}$. (That is, we consider those decorations $\bfj$ such that $\bfj$ and $\widetilde\bfj$ coincide outside the $j^{th}$ and $(j+1)^{th}$ columns.)

For the rest of the argument, we will be viewing $m_{\mathbf{\sigma(r)},\mathbf{j}}$ as a decorated $n\times d$ matrix. Furthermore, we will be focusing on the $2\times 2$ submatrix consisting of entries from the $j^{th}$ and $(j+1)^{th}$ columns and the $a^{th}$ and $b^{th}$ rows. So, when we write the matrix $\begin{pmatrix}0 & 1 \\ 1 & 0\end{pmatrix}$ we are representing a matrix of the form \begin{align*}&\left( \begin{array}{cccc}  & \vdots & \vdots &  \\ \hdots & 0 & 1 & \hdots \\  & \vdots & \vdots & \\ \hdots & 1 & 0 & \hdots \\ & \vdots & \vdots & \end{array}\right)\begin{array}{c} ~ \\ \scriptstyle{b} \\ \phantom{\vdots} \\ \scriptstyle{a} \\ ~ \end{array}\\
&\begin{array}{cccccc} & \phantom{\hdots} & \scriptstyle{j} & \scriptstyle{j+1} & & \end{array}\end{align*}
Note that the right action of an element $\tau_j$ only affects the $2\times 2$ submatrix, so these are sufficient for the computations that follow.

Let $\sigma(\mathbf{r})=(r_1,r_2,\dots,r_d)$. We have three cases:
\begin{enumerate} [label=\alph*)]
\item $r_k<b<a$ for some $1\leq k\leq j-1$: In this case, for any $\bfj\in\Sigma_{\sigma(\bfr)}$ or $\bfj\in\Sigma_{\sigma\circ  s_j(\bfr)}$, we have $j,j+1\not\in\bfj$. Hence, the summands of the sum~\eqref{eq:sumsigma} such that $\bfj\setminus \widetilde{\bfj}\subset\{j,j+1\}$ are:
\[(-v^2)^{-\len(\sigma)}(1-v^2)^{n-|\widetilde{\mathbf{j}}|}\Bigg(\begin{pmatrix}0&1\\1&0\end{pmatrix}-v^2\begin{pmatrix}1&0\\0&1\end{pmatrix}\Bigg).\]
Then,
\begin{align*}
\Bigg(\begin{pmatrix}0&1\\1&0\end{pmatrix}-v^2\begin{pmatrix}1&0\\0&1\end{pmatrix}\Bigg)\cdot \tau_j&=\begin{pmatrix}0&1\\1&0\end{pmatrix}\cdot \tau_j-v^2\begin{pmatrix}1&0\\0&1\end{pmatrix}\cdot \tau_j\\
&=v^2\begin{pmatrix}1&0\\0&1\end{pmatrix}+(v^2-1)\begin{pmatrix}0&1\\1&0\end{pmatrix} -v^2\begin{pmatrix}0&1\\1&0\end{pmatrix}\\
&=v^2\begin{pmatrix}1&0\\0&1\end{pmatrix}-\begin{pmatrix}0&1\\1&0\end{pmatrix},
\end{align*}
showing that $\tau_j$ acts by $-1$.
\item $a>r_k$ for some $1\leq k\leq j-1$ but $b<r_k$ for all $1\leq k\leq j-1$: In this case, for any $\bfj\in\Sigma_{\sigma(\bfr)}$ or $\bfj\in\Sigma_{\sigma \circ s_j(\bfr)}$, we have $j\in\bfj$ (respectively $j+1\in\bfj$) only if $r_j=b$ (respectively $r_{j+1}=b$). So, the summands of the sum~\eqref{eq:sumsigma} such that $\bfj\setminus \widetilde{\bfj}\subset\{j,j+1\}$ are:
\[(-v^2)^{-\len(\sigma)}(1-v^2)^{n-|\widetilde{\mathbf{j}}|}\Bigg(\begin{pmatrix}0&1\\1&0\end{pmatrix}-(v^2-1)^{-1}\begin{pmatrix}0&\circled1\\1&0\end{pmatrix}-v^2\begin{pmatrix}1&0\\0&1\end{pmatrix}+v^2(v^2-1)^{-1}\begin{pmatrix}\circled1&0\\0&1\end{pmatrix}\Bigg).\]
Then,
\begin{align*}
&\Bigg((v^2-1)\begin{pmatrix}0&1\\1&0\end{pmatrix}- \begin{pmatrix}0&\circled1\\1&0\end{pmatrix}-v^2(v^2-1)\begin{pmatrix}1&0\\0&1\end{pmatrix}+v^2\begin{pmatrix}\circled1&0\\0&1\end{pmatrix}\Bigg)\cdot \tau_j\\
&=(v^2-1)\begin{pmatrix}0&1\\1&0\end{pmatrix}\cdot \tau_j-\begin{pmatrix}0&\circled1\\1&0\end{pmatrix}\cdot \tau_j-v^2(v^2-1)\begin{pmatrix}1&0\\0&1\end{pmatrix}\cdot \tau_j+v^2\begin{pmatrix}\circled1&0\\0&1\end{pmatrix}\cdot \tau_j\\
&=v^2(v^2-1)\begin{pmatrix}1&0\\0&1\end{pmatrix}+(v^2-1)^2\begin{pmatrix}0&1\\1&0\end{pmatrix} - v^2\begin{pmatrix}\circled1&0\\0&1\end{pmatrix}-(v^2-1)\begin{pmatrix}0&\circled1\\1&0\end{pmatrix}\\
&-v^2(v^2-1)\begin{pmatrix}0&1\\1&0\end{pmatrix}+v^2\begin{pmatrix}0&\circled1\\1&0\end{pmatrix}\\
&=v^2(v^2-1)\begin{pmatrix}1&0\\0&1\end{pmatrix}-(v^2-1)\begin{pmatrix}0&1\\1&0\end{pmatrix}- v^2\begin{pmatrix}\circled1&0\\0&1\end{pmatrix}+\begin{pmatrix}0&\circled1\\1&0\end{pmatrix},
\end{align*}
showing that $\tau_j$ acts by $-1$.
\item $b<a<r_k$ for all $1\leq k\leq j-1$: In this case, we get a sum over all valid good decorations of the $j^{th}$ and $(j+1)^{th}$ columns. So, the summands of the sum~\eqref{eq:sumsigma} such that $\bfj\setminus \widetilde{\bfj}\subset\{j,j+1\}$ are:
\begin{align*}
(-v^2)^{-\len(\sigma)}(1-v^2)^{n-|\widetilde{\mathbf{j}}|}\Bigg(&\begin{pmatrix}0&1\\1&0\end{pmatrix}-(v^2-1)^{-1}\begin{pmatrix}0&\circled1\\1&0\end{pmatrix}-(v^2-1)^{-1}\begin{pmatrix}0&1\\\circled1&0\end{pmatrix}\\
&+(v^2-1)^{-2}\begin{pmatrix}0&\circled1\\\circled1&0\end{pmatrix}-v^2\begin{pmatrix}1&0\\0&1\end{pmatrix}+v^2(v^2-1)^{-1}\begin{pmatrix}\circled1&0\\0&1\end{pmatrix}\Bigg).
\end{align*}
Then,
\begin{align*}
&\Bigg((v^2-1)^2\begin{pmatrix}0&1\\1&0\end{pmatrix}-(v^2-1)\begin{pmatrix}0&\circled1\\1&0\end{pmatrix}-(v^2-1)\begin{pmatrix}0&1\\\circled1&0\end{pmatrix}\\
&+\begin{pmatrix}0&\circled1\\\circled1&0\end{pmatrix}-v^2(v^2-1)^2\begin{pmatrix}1&0\\0&1\end{pmatrix}+v^2(v^2-1)\begin{pmatrix}\circled1&0\\0&1\end{pmatrix}\Bigg)\cdot \tau_j\\
&=(v^2-1)^2\begin{pmatrix}0&1\\1&0\end{pmatrix}\cdot \tau_j-(v^2-1)\begin{pmatrix}0&\circled1\\1&0\end{pmatrix}\cdot \tau_j-(v^2-1)\begin{pmatrix}0&1\\\circled1&0\end{pmatrix}\cdot \tau_j\\
&+\begin{pmatrix}0&\circled1\\\circled1&0\end{pmatrix}\cdot \tau_j-v^2(v^2-1)^2\begin{pmatrix}1&0\\0&1\end{pmatrix}\cdot \tau_j+v^2(v^2-1)\begin{pmatrix}\circled1&0\\0&1\end{pmatrix}\cdot \tau_j\\
&=v^2(v^2-1)^2\begin{pmatrix}1&0\\0&1\end{pmatrix}+(v^2-1)^3\begin{pmatrix}0&1\\1&0\end{pmatrix}- v^2(v^2-1)\begin{pmatrix}\circled1&0\\0&1\end{pmatrix}-(v^2-1)^2\begin{pmatrix}0&\circled1\\1&0\end{pmatrix}\\
&-(v^2-1)\begin{pmatrix}1&0\\0&\circled1\end{pmatrix}-(v^2-1)\begin{pmatrix}0&\circled1\\\circled1&0\end{pmatrix}+(v^2-1)\begin{pmatrix}1&0\\0&\circled1\end{pmatrix}+(v^2-1)\begin{pmatrix}0&1\\\circled1&0\end{pmatrix}\\
&+(v^2-2)\begin{pmatrix}0&\circled1\\\circled1&0\end{pmatrix}-v^2(v^2-1)^2\begin{pmatrix}0&1\\1&0\end{pmatrix}+v^2(v^2-1)\begin{pmatrix}0&\circled1\\1&0\end{pmatrix}\\
&=v^2(v^2-1)^2\begin{pmatrix}1&0\\0&1\end{pmatrix}-(v^2-1)^2\begin{pmatrix}0&1\\1&0\end{pmatrix}- v^2(v^2-1)\begin{pmatrix}\circled1&0\\0&1\end{pmatrix}+(v^2-1)\begin{pmatrix}0&\circled1\\1&0\end{pmatrix}\\
&-\begin{pmatrix}0&\circled1\\\circled1&0\end{pmatrix}+(v^2-1)\begin{pmatrix}0&1\\\circled1&0\end{pmatrix},
\end{align*}
showing that $\tau_j$ acts by $-1$.
\end{enumerate}
\end{proof}



We now find bases for simultaneous zero eigenspaces of the Jucys-Murphy elements by slightly generalizing the above lemma. Fix $k\leq \min(n,d)$ and let $S\in [n]_k$ be a $k$-element subset of $\{1,2,\dots, n\}$. By abuse of notation, we will also view $S$ as a sequence, by arranging the elements of $S$ in decreasing order. If $S=(r_1,\ldots,r_k)$ (with $r_1>\cdots>r_k$), and $\sigma\in \mathcal{S}_k$, we can consider the sequence $\sigma(S)=(r_{\sigma^{-1}(1)},\ldots,r_{\sigma^{-1}(k)})$. We define: \[\Gamma_S:=\{(\sigma, \bfj): \sigma\in \mathcal S_k, \bfj\in \Sigma_{\sigma(S)}\}.\]
We also recall from~\eqref{eq:jrs-def} the set $J_{\bfr}(S)=\{\bfj\in J_{\bfr}~|~\forall j\in\bfj~\text{and}~s\in S,~r_j< s\}$. For any such set $X\sub \ZZ$ and $k\in \ZZ$, we define:
\[X+k:=\{x+k:x\in X\}.\]

\begin{definition}
Given $S\in [n]_k$, an element ${\mathbf{r}}\in\{1,2,\dots,n\}^{d-k}$ and a decoration ${\mathbf{j}}\in J_{\bfr}(S)$, we define the element:
\[\mathbf{s}_{S,{\mathbf{r}},{\mathbf{j}}}:=\sum_{(\sigma,\widetilde{\mathbf{j}})\in \Gamma_S}  (-v^2)^{-\len(\sigma)}(1-v^2)^{k-|\widetilde{\mathbf{j}}|}m_{\sigma(S)\cup {\mathbf{r}},\widetilde{\mathbf{j}}\cup({\mathbf{j}}+k)}.\]
(In the above formula, `$\sigma(S)\cup{\bfr}$'  refers to the concatenation of sequences, whereas `$\widetilde{\mathbf{j}}\cup({\mathbf{j}}+k)$' refers to a union of sets.)
\end{definition}

\begin{remark} \label{rem:s-lem41}
Note that when $k=n\leq d$, we can take $S=\{1,2,\dots,n\}$, arbitrary $\bfr\in\{1,2,\dots,n\}^{d-n}$ and $\bfj=\emptyset$. In that case, the element $\mathbf{s}_{S,{\mathbf{r}},{\mathbf{j}}}$ is exactly the vector defined in Lemma~\ref{lem:simzero}.
\end{remark}


\begin{definition}
Let $\mathbf{r}\in \{1,2,\dots, n\}^d$ and fix $k$ such that $1\leq k\leq \min(n,d)$. We define the $k$-support of $m_{\mathbf{r}}$ to be the set $Supp_k(m_{\mathbf{r}}):=\{r_i:1\leq i\leq k\}$. For any $v=\sum_{\bfr} c_{\bfr}m_{\bfr}$ for some scalars $c_{\bfr}$, we define:
\[Supp_k(v):=\bigcup_{c_{\bfr}\neq 0}Supp_k(m_{\bfr}).\]
\end{definition}

It follows that $|Supp_k(m_{\mathbf{r}})|\leq k$ for all $\mathbf{r}$. Note that for all the summands in the definition of $\mathbf{s}_{S,{\mathbf{r}},{\mathbf{j}}}$, the $k$-support is equal to $S$. (In particular, the support set has size $k$.) 

\begin{remark} \label{rem:chopoff}
\begin{enumerate}
\item The definition of support is motivated by the following fact which follows from the formulae for the $MH_d$-action: Given $1\leq k\leq \min(n,d)$ and $S\in [n]_k$, let $V_S$ denote the subspace of $MV_{n,d}$ spanned by all $m_{\bfr,\bfj}$'s whose $k$-support lies in $S$ and and any decorations in the last $d-k$ columns lie above all the rows in $S$. Then, the linear map $V_S\to MV_{k,k}$ obtained by deleting the last $d-k$ columns and all the rows outside $S$ is equivariant with respect to the $MH_k$-action. Here, we view the algebra $MH_k$ as the subalgebra of $MH_d$ generated by $e$ and $\tau_i$ for $1\leq i\leq k-1$.

\item Similarly, given such $k$ and $S$, consider a sequence $\widetilde \bfr=(r_1,r_2,\dots, r_i)$ such that $\min(S)\leq r_1<r_2<\dots <r_i\leq \max(S)$. Let $V_{S,\widetilde \bfr}$ denote the subspace spanned by all $m_{\bfr,\bfj}$'s whose $k$-support lies in $S$ and having a decoration somewhere in the last $d-k$ columns in the $r_j^{th}$ row for each $j$. (Note that if $i>d-k$, this subspace is zero.) Then, we obtain an $MH_k$-equivariant map $V_{S,\widetilde \bfr}\to MV_{k,k+1}$ by deleting the last $d-k-1$ columns and all the rows outside $S$, and replacing the $(k+1)^{th}$ column with one having a decorated $1$ in the row indexed by $\max([1,r_i]\cap S)$.
\end{enumerate}
\end{remark}

\begin{lemma} \label{lem:zerobasis}
Let $1\leq k\leq\min(n,d)$ and let $Z_k$ denote the simultaneous zero eigenspace for $\LL_1, \LL_2,\dots, \LL_k$ in $MV_{n,d}$. (That is, $Z_k=\bigcap_{i=1}^k\ker(\LL_i)$.) Then, a basis for $Z_k$ is given by the elements $\mathbf{s}_{S,{\mathbf{r}},{\mathbf{j}}}$, for $S\in[n]_k$, ${\bfr}\in\{1,\ldots,n\}^{d-k}$, ${\bfj}\in J_{{\bfr}}(S)$. Furthermore, if $n<k'\leq d$, we have $\bigcap_{i=1}^{k'}\ker(\LL_i)=0$.
\end{lemma}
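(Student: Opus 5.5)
The plan has three parts: show each $\mathbf{s}_{S,\bfr,\bfj}$ lies in $Z_k$, show the family is linearly independent, and compute $\dim Z_k$ from the representation theory of $MH_d$ to get a matching count.

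\textbf{Membership in $Z_k$.} Write $MH_k\subset MH_d$ for the subalgebra generated by $e,\tau_1,\dots,\tau_{k-1}$. Since $\LL_1,\dots,\LL_k$ lie in $MH_k$, it suffices to prove that $e$ acts by $0$ and each $\tau_j$ ($1\le j\le k-1$) acts by $-1$ on $\mathbf{s}_{S,\bfr,\bfj}$. The summands of $\mathbf{s}_{S,\bfr,\bfj}$ have $k$-support $S$. Their decorations in the last $d-k$ columns are $\bfj+k$, and these lie in rows strictly above every row of $S$ because $\bfj\in J_\bfr(S)$. So all summands lie in the space $V_S$ of Remark~\ref{rem:chopoff}(1). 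The chopping map $V_S\to MV_{k,k}$ is $MH_k$-equivariant and sends $\mathbf{s}_{S,\bfr,\bfj}$ to the vector of Lemma~\ref{lem:simzero} with $n=d=k$. That vector is killed by $e$ and has $\tau_j$ acting by $-1$. Restricted to the span of the relevant summands, the chopping map is injective, since $\bfr$ and $\bfj$ are fixed. Hence the same relations hold in $MV_{n,d}$. I will check that the action formulas for $e$ and $\tau_j$, $j<k$, never involve the later decorations here. They only test conditions like ``$\exists\, k'\in\bfj$ with $r_{j+1}\le r_{k'}$'', and these conditions fail for decorations sitting above $S$, as in the chopped case.

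\textbf{Linear independence.} Fix $(S,\bfr,\bfj)$. The term with $\sigma=\mathrm{id}$ and $\widetilde\bfj=\emptyset$ contributes $(1-v^2)^k m_{S\cup\bfr,\bfj+k}$, and the basis vector $m_{S\cup\bfr,\bfj+k}$ occurs in no other $\mathbf{s}_{S',\bfr',\bfj'}$. Indeed, the undecorated part of the first $k$ columns (decreasing order), together with the last $d-k$ columns and their decorations, recovers $S$, $\bfr$ and $\bfj$. Each vector thus has a distinguished leading basis element, which gives independence.

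\textbf{Dimension count.} By \eqref{eq:MSWD} as left modules (Proposition~\ref{prop:leftmod}), together with the double centralizer decomposition \eqref{eq:DCT}, the right $MH_d$-module $MV_{n,d}$ is a sum of simple modules $M^{\bs\mu}$. Their multiplicities are $\dim L_{\bs\lambda}$, with the pairing $\bs\lambda\leftrightarrow\bs\mu$ not yet known. I therefore avoid needing the pairing and compute $\dim Z_k$ by restricting to $MH_k$ instead. The space $MV_{n,d}$ decomposes by the last $d-k$ columns: blocks indexed by $(\bfr,\bfj)$, with a filtration according to decorations. On the associated graded, the $MH_k$-action looks like copies of $MV_{n',k}$-type pieces, where the allowed rows are those below the lowest decoration. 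Theorem~\ref{thm:JM_action} and Remark~\ref{rem:simulbases} show that the simultaneous zero eigenspace of $\LL_1,\dots,\LL_k$ on an $MH_k$-module $M^{(\lambda,1^s)}$ is nonzero only when $\lambda=\emptyset$ and $s=k$. In that case it is one-dimensional, namely the sign-like component of Remark~\ref{rem:TL}. So $\dim Z_k$ equals the multiplicity of $M^{(\emptyset,1^k)}$ in $\mathrm{Res}_{MH_k}MV_{n,d}$. I then want to show this multiplicity is $\sum_{S}\#\{(\bfr,\bfj):\bfj\in J_\bfr(S)\}$. For this I will use the $\mathbf{e}_k$ idempotent and Remark~\ref{rem:chopoff}(2) to argue that, block by block, $\mathbf{e}_k$ projects onto a space spanned by the chopped images, one for each $S$ compatible with the decorations.

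This upper bound is the main obstacle. If the filtration argument proves awkward, I will instead bound $\dim Z_k$ by showing directly that every $x\in Z_k$ is a combination of the $\mathbf{s}$'s. Subtracting multiples of the $\mathbf{s}$'s clears the leading terms; then the condition $\tau_j=-1$ forces zero coefficients on first-$k$-column patterns whose support has repeated rows, and $e=0$ handles the decorations.

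For the final claim: when $k'>n$, every basis matrix has a repeated row among its first $k'$ columns. The same leading-term argument then shows that no nonzero vector can have $\tau_j=-1$ for all $j<k'$ and $e=0$. Representation-theoretically, this says $M^{(\emptyset,1^{k'})}$ does not occur in $\mathrm{Res}_{MH_{k'}}MV_{n,d}$ when $k'>n$.
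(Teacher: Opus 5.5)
Your membership and linear-independence arguments are fine and agree with the paper's. For membership, you chop via Remark~\ref{rem:chopoff}(1) to the vector of Lemma~\ref{lem:simzero} with $n=d=k$. For independence, each basis vector $m_{\bfr'',\bfj''}$ occurs in at most one $\mathbf{s}_{S,\bfr,\bfj}$.

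The gap is the spanning direction, which is the real content of the lemma. You leave it as an unexecuted plan, and neither mechanism you propose works as stated.

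\emph{First route.} The blocks indexed by the last $d-k$ columns and their decorations are already $MH_k$-stable, so no filtration is needed. But they are not copies of $MV_{n',k}$ on the rows strictly below the later decorations. The first $k$ columns may also contain undecorated entries in rows $\le$ the row of a later decoration, and there the action is non-mirabolic: for example, $m_{\bfr,\bfj}\cdot e=m_{\bfr,\bfj}$ whenever some later decoration lies in a row $\ge r_1$. No $\mathbf{s}$ is supported on these pieces, so one must prove that $Z_k$ meets them trivially. Nothing in your sketch does this.

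\emph{Fallback.} Your claim that $\tau_j=-1$ kills repeated-row patterns is false once decorations are allowed. If $r_j=r_{j+1}$ and $j,j+1\notin\bfj$, the formulas of Section~\ref{sec:action-formulae} give
\[\bigl(m_{\bfr,\bfj\cup\{j+1\}}-v^2m_{\bfr,\bfj\cup\{j\}}\bigr)\cdot\tau_j=-\bigl(m_{\bfr,\bfj\cup\{j+1\}}-v^2m_{\bfr,\bfj\cup\{j\}}\bigr).\]
For $k=2$ this is a vector supported on repeated-row patterns on which $\tau_1$ acts by $-1$. So the vanishing can only come from the interplay of $e=0$ with the $\tau_j$'s, not from the split ``$\tau_j$ handles repeated rows, $e$ handles decorations''. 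Your argument for $k'>n$ rests on the same unproved step.

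The paper fills this gap by induction on $k$, in four pieces:
\begin{itemize}
\item The base case includes the explicit check that $\LL_2$ acts by $1$ on $Z_1$ when $n=1$.
\item Lemma~\ref{lem:knd} shows that $Z_k\subset MV_{k,k}$ is one-dimensional. It uses Propositions~\ref{prop:leftmod} and~\ref{prop:s=0} to force $M^{(\emptyset,1^k)}$ to pair with the one-dimensional $L_{(\emptyset,1^k)}$.
\item Lemma~\ref{lem:knd-1} shows that in $MV_{k,k+1}$ no nonzero vector of $Z_k$ has a decorated last column, and that $\bigcap_{i\le k+1}\ker\LL_i=0$. Its hard case is settled by showing that otherwise $\mathbf{s}_{S,\bfr,\bfj}$ would be a $\tau_{k-1}$-eigenvector.
\item Remark~\ref{rem:chopoff}(1),(2) then cut each block (by $k$-support and later columns) down to one of these two cases. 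The induction hypothesis forces the $k$-support to have size $k$.
\end{itemize}
Some argument of this kind, or an honest computation of the multiplicity of $M^{(\emptyset,1^k)}$ in each block, is what your proposal is missing.
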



\begin{remark}
By Theorems~\ref{thm:MHd-reps} and \ref{thm:JM_action}, any $MH_d$-representation (in particular, $MV_{n,d}$) has a basis with respect to which the Jucys-Murphy elements are simultaneously diagonalizable. Then, as a consequence of the above theorem and the description of eigenvalues in Theorem~\ref{thm:JM_action}, we have that that $\LL_{n+1}$ acts by $1$ on the space $Z_n\sub MV_{n,d}$.
\end{remark}

The action of the Jucys-Murphy elements being zero on $\mathbf{s}_{S,{\mathbf{r}},{\mathbf{j}}}$ follows from Lemma~\ref{lem:simzero} and Remark~\ref{rem:chopoff}(1). Linear independence is clear since the non-zero summands in each $\mathbf{s}_{S,{\mathbf{r}},{\mathbf{j}}}$ are distinct. The crucial step is to show that they indeed span $Z_k.$ The proof will be using induction on $k$. The base case of the induction is as follows. When $k=1$, the zero eigenspace of $\LL_1=e$ is spanned by:
\[\{m_{\mathbf{r},\mathbf{j}}-(v^2-1)m_{\mathbf{r},\mathbf{j}\setminus\{1\}}~|~1\in\mathbf{j}\}\]
by Corollary~\ref{cor:eeigen}. Then, the claim follows from the fact that:
\[\mathbf{s}_{S,\widetilde{\bfr},\widetilde{\bfj}}=m_{\mathbf{r},\mathbf{j}}-(v^2-1)m_{\mathbf{r},\mathbf{j}\setminus\{1\}},\]
where $S=\{r_1\}, \widetilde{\bfr}=(r_2,\ldots,r_d)$ and $\widetilde{\bfj}=(\bfj\setminus\{1\})-1$. To complete the proof of the base case, we show that when $n=1$, then $Z_1\cap \ker(\LL_2)=0$ (i.e., $\LL_1$ and $\LL_2$ do not have a simultaneous zero eigenvector). To this end, we note that when $n=1$, the space $Z_1$ is $1$-dimensional, and is spanned by 
\[\begin{pmatrix} \circled1 & 1 & \cdots & 1 \end{pmatrix}-(v^2-1)\begin{pmatrix}1 & 1 & \cdots & 1 \end{pmatrix}.\]
Then, we have  \begin{align*}
&\Big(\begin{pmatrix} \circled1 & 1 & \cdots & 1 \end{pmatrix}-(v^2-1)\begin{pmatrix}1 & 1 & \cdots & 1 \end{pmatrix}\Big)\cdot \LL_2\\
=&\Big(\begin{pmatrix} \circled1 & 1 & \cdots & 1 \end{pmatrix}-(v^2-1)\begin{pmatrix}1 & 1 & \cdots & 1 \end{pmatrix}\Big)\cdot v^{-2}\tau_1 e\tau_1 \\
=&\Big(\begin{pmatrix} 1& \circled1 & \cdots & 1 \end{pmatrix}-(v^2-1)v^2\begin{pmatrix}1 & 1 & \cdots & 1 \end{pmatrix}\Big)\cdot v^{-2}e\tau_1 \\
=&\Big(\begin{pmatrix} 1& \circled1 & \cdots & 1 \end{pmatrix}-(v^2-1)\big(\begin{pmatrix}1 & 1 & \cdots & 1 \end{pmatrix}+\begin{pmatrix}\circled1 & 1 & \cdots & 1 \end{pmatrix}\big)\Big)\cdot v^{-2}\tau_1 \\
=&\Big(v^2\begin{pmatrix} \circled1 & 1&  \cdots & 1 \end{pmatrix}+(v^2-1)\begin{pmatrix} 1& \circled1 & \cdots & 1 \end{pmatrix}+ \\
& -(v^2-1)v^2\begin{pmatrix}1 & 1 & \cdots & 1 \end{pmatrix}-(v^2-1)\begin{pmatrix} 1 & \circled1 & \cdots & 1 \end{pmatrix}\Big)v^{-2}\\
=&\begin{pmatrix} \circled1 & 1 & \cdots & 1 \end{pmatrix}-(v^2-1)\begin{pmatrix}1 & 1 & \cdots & 1 \end{pmatrix},
\end{align*}
showing that $\LL_2$ acts by $1$ on $Z_1$.

Before we work with general $k$, we prove two special cases (Lemmas~\ref{lem:knd} and~\ref{lem:knd-1}) of the above lemma, and then show that the general case can be reduced to these two cases.


\begin{lemma} \label{lem:knd}
Let $Z_k=\bigcap_{i=1}^k\ker(\LL_i)$ in $MV_{k,k}$. Then, the space $Z_k$ is one-dimensional and is spanned by the vector $\mathbf{s}_{S,\emptyset,\emptyset}$, where $S=\{1,2,\dots, k\}$.
\end{lemma}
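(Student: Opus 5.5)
We will show that $Z_k$ contains $\mathbf{s}_{S,\emptyset,\emptyset}$ and that $\dim Z_k\leq 1$. The containment is immediate: by Remark~\ref{rem:s-lem41} with $n=d=k$, the vector $\mathbf{s}_{S,\emptyset,\emptyset}$ is exactly the vector of Lemma~\ref{lem:simzero}, which lies in $Z_k$. It is nonzero because its summands are distinct basis vectors. So everything rests on the upper bound.

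The first step is to replace the defining conditions of $Z_k$ by conditions on the generators. By Theorem~\ref{thm:JM_action} and Remark~\ref{rem:simulbases}, $Z_k$ is spanned by vectors $x_U$ inside copies of $M^{\bs\lambda}$ with every $i$ in $U^2$. Since $U^2$ is a single column, this forces $\bs\lambda=(\emptyset,1^k)$ and $U$ to be the unique bitableau of that shape. On that vector, $e=\LL_1$ acts by $0$. Each $\tau_j$ ($1\le j\le k-1$) acts by $-1$, since $j,j+1$ sit in the same column of $U^2$ and, by seminormal form considerations, the only consistent eigenvalue is $-1$, matching the computation in Lemma~\ref{lem:simzero}. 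Conversely, these conditions imply that all $\LL_i$ vanish. Hence
\[Z_k=\{x\in MV_{k,k}\;:\; x\cdot e=0,\ x\cdot\tau_j=-x \text{ for } 1\le j\le k-1\}.\]

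Next we write $x=\sum c_{\bfr,\bfj}m_{\bfr,\bfj}$ and extract constraints from $x\cdot\tau_j=-x$ using the formulae of Section~\ref{sec:action-formulae}.

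\emph{(a) Distinct entries.} Suppose $r_j=r_{j+1}$. Then neither of columns $j,j+1$ can carry a decoration compatible with a decoration in the other, and the relevant formula gives eigenvalue $v^2$ on the corresponding piece. Comparing coefficients of $m_{\bfr,\bfj}$ (choosing a maximal such term in a suitable order on decorations) forces $c_{\bfr,\bfj}=0$. Iterating over all adjacent pairs, and conjugating by the $\tau$'s when the repeated entries are not adjacent, we conclude that $x$ is supported on those $\bfr$ that are permutations $\sigma(S)$ of $(k,k-1,\dots,1)$.

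\emph{(b) Determination by the longest permutation.} For $\bfr$ with $r_j>r_{j+1}$, comparing coefficients of $m_{s_j\bfr,\ast}$ in $x\cdot\tau_j=-x$ expresses the coefficients on $s_j\bfr$ in terms of those on $\bfr$. These are exactly the $2\times 2$ block computations of cases a), b), c) in Lemma~\ref{lem:simzero}, now read backwards. By induction on $\len(\sigma)$, every coefficient is determined by the coefficients $c_{S,\bfj}$ on the sequence $S=(k,\dots,1)$ itself.

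\emph{(c) Reduction to one scalar.} Finally, we use $x\cdot e=0$ together with the $\tau_j$-conditions at $\bfr=S$ to relate the coefficients $c_{S,\bfj}$ for different decorations $\bfj$. Corollary~\ref{cor:eeigen} gives $c_{S,\bfj\setminus\{1\}}=-(v^2-1)c_{S,\bfj}$ whenever $1\in\bfj$. For a decoration containing $j+1$ but not $j$, the block formulae relate its coefficient to one with fewer decorations; for $S$ every decoration set is automatically good. Together these should pin every $c_{S,\bfj}$ to the single scalar $c_{S,\{1,\dots,k\}}$, giving $\dim Z_k\le 1$.

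I expect step (c), and the bookkeeping in (a), to be the main obstacle. The $\tau_j$-formulae mix decorated and undecorated terms, so to make the triangularity rigorous one must fix a total order: first by $\len(\sigma)$, then by the number of decorations. One then checks that in each coefficient comparison the unknown of highest order appears with an invertible coefficient, such as $v^2$, $v^2-1$, or $v^2+1$. If this direct route becomes unwieldy, the fallback is representation-theoretic. The multiplicity of $M^{(\emptyset,1^k)}$ in $MV_{k,k}$ equals $\dim Z_k$, and by \eqref{eq:DCT} it equals the dimension of its partner $L_{\bs\lambda}$. One would then identify the partner as the one-dimensional $W_k=L_{(0,1^k)}$ by combining Proposition~\ref{prop:leftmod} with the facts that $\ell$ kills $\mathbf{s}_{S,\emptyset,\emptyset}$ and that this vector has weight $(1,\dots,1)$.
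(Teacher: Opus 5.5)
Your direct route (a)--(c) does not establish $\dim Z_k\le 1$. Step (a) rests on a false local claim. When $r_j=r_{j+1}$ and one of the columns $j,j+1$ carries a decoration, $\tau_j$ does not act by $v^2$ on that piece. Put $a=m_{\bfr,\bfj'\cup\{j\}}$ and $b=m_{\bfr,\bfj'\cup\{j+1\}}$; both are valid decorations. The formulae of Section~\ref{sec:action-formulae} give $a\cdot\tau_j=b$ and $b\cdot\tau_j=v^2a+(v^2-1)b$, so $(b-v^2a)\cdot\tau_j=-(b-v^2a)$. For example, in $MV_{2,2}$ the vector $m_{(1,1),\{2\}}-v^2m_{(1,1),\{1\}}$ satisfies $x\cdot\tau_1=-x$, and only the condition $x\cdot e=0$ excludes it from $Z_2$. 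So no choice of order on decorations lets a $\tau_j$-coefficient comparison force $c_{\bfr,\bfj}=0$. Ruling out repeated entries needs the interplay of $e$ with all the $\tau_j$, which is essentially the whole content of the lemma. Step (c) is also only asserted (``should pin''), so the upper bound is not proved by this route.

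Your fallback is correct and is essentially the paper's proof. $Z_k$ is the isotypic component of the one-dimensional $M^{(\emptyset,1^k)}$, and it is nonzero because it contains $\mathbf{s}_{S,\emptyset,\emptyset}$. Hence $\dim Z_k$ equals the dimension of the partner $L$ of $M^{(\emptyset,1^k)}$ in \eqref{eq:DCT}. The step you should make explicit is that this partner occurs with multiplicity $\dim M^{(\emptyset,1^k)}=1$. By Proposition~\ref{prop:leftmod}, $L=L_{\bs\lambda}$ with $\dim M^{\bs\lambda}=\binom{k}{s}f^{\lambda}=1$, so $\bs\lambda\in\{(k,\emptyset),(1^k,\emptyset),(\emptyset,1^k)\}$. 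The paper excludes the first two via Proposition~\ref{prop:s=0}. Your observation that $\ell\cdot\mathbf{s}_{S,\emptyset,\emptyset}=0$ (Lemma~\ref{lem:simzero}) does the same job, because $\ell$ acts as the identity on every $L_{(\lambda,\emptyset)}$. Hence $L=W_k$, which is one-dimensional when $n=k$, and $\dim Z_k=1$. The multiplicity-one count, not the weight, is what does the work: every $L_{(\lambda,1^s)}$ with $s\geq 1$ contains a vector of weight $(1,\dots,1)$ killed by $\ell$. You should promote the fallback to the proof and drop (a)--(c).
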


\begin{proof}
In the space $MV_{k,k}$, the space $Z_k$ is exactly the isotypic component of the one-dimensional $MH_k$-representation corresponding to the bipartition $(\emptyset,1^k)$. By Proposition~\ref{prop:leftmod}, the only $L_{\bs\lambda}$'s that pair with a one-dimensional representation of $MH_k$ correspond to $\bs\lambda=(k,\emptyset)$, $\bs\lambda=(1^k,\emptyset)$ or $\bs\lambda=(\emptyset,1^k)$. Each of the $\LL_i$'s act with non-zero eigenvalues in the first two cases by Proposition~\ref{prop:s=0}. Thus, the representation $M^{(\emptyset,1^k)}$ of $MH_k$ must pair with the $L_{\bs\lambda}$ when $\bs\lambda=(\emptyset,1^k)$, which is itself a one-dimensional representation. Hence, the space $Z_k$ is one dimensional and is spanned by the vector $\mathbf{s}_{S,\emptyset,\emptyset}$, where $S=\{1,2,\dots, k\}$. 
\end{proof}

\begin{lemma} \label{lem:knd-1}
Let $Z_k=\bigcap_{i=1}^k\ker(\LL_i)$ in $MV_{k,k+1}$. Then, the space $Z_k$ is $k$-dimensional and is spanned by the vectors $\mathbf{s}_{S,{\mathbf{r}},{\mathbf{j}}}$ where $S=\{1,2,\dots,k\}$, ${\mathbf{r}}\in\{1,2,\dots, k\}$ and ${\mathbf{j}}=\emptyset$. Furthermore, $\bigcap_{i=1}^{k+1}\ker(\LL_i)=0$.

In particular, each of these basis vectors has $k$-support of size $k$ and no summands with decorations in the last column.
\end{lemma}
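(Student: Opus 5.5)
Following the pattern of the proof of Lemma~\ref{lem:knd}, we will read off $Z_k$ from the bimodule structure of $MV_{k,k+1}$, using Proposition~\ref{prop:leftmod} together with Theorem~\ref{thm:JM_action}. Throughout, $n=k$ and $d=k+1$.

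\textbf{Step 1: which $MH_{k+1}$-simples contribute.} Right $MH_{k+1}$-simples $M^{\bs\lambda}$ contain a vector killed by $\LL_1,\dots,\LL_k$ exactly when some standard bitableau $U$ of shape $\bs\lambda$ has $1,\dots,k$ in $U^2$. This follows from Theorem~\ref{thm:JM_action} and Remark~\ref{rem:simulbases}, which describe the simultaneous eigenspaces in the basis $x_U$. Since $U^2$ is a single column $1^s$, we need $s\ge k$. The bipartitions of $k+1$ with $s\le n=k$ and $s\ge k$ are $(1,1^k)$ and nothing else. For $\bs\lambda=((1),1^k)$:
\begin{itemize}
\item the tableaux with $1,\dots,k\in U^2$ are those with $U^2=(1,\dots,k)$ and $U^1=(k+1)$;
\item there is exactly one such tableau;
\item on it $\LL_{k+1}$ acts by $v^{0}=1\neq 0$.
\end{itemize}
The case $s=k+1$ is excluded because $s\le n=k$ by Proposition~\ref{prop:leftmod}. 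Hence, by \eqref{eq:DCT} and Proposition~\ref{prop:leftmod},
\[
\dim Z_k=\dim L_{((1),1^k)}\cdot 1=\dim(L_{(1)}\otimes W_k)=k\cdot 1=k ,
\]
since $W_k$ is one-dimensional when $n=k$ and $L_{(1)}$ is the $k$-dimensional natural representation. The same computation gives $\bigcap_{i=1}^{k+1}\ker\LL_i=0$, because $\LL_{k+1}$ acts by $1$ on the only contributing vectors.

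\textbf{Step 2: the explicit vectors lie in $Z_k$.} For each $r\in\{1,\dots,k\}$, the vector $\mathbf s_{S,(r),\emptyset}$ with $S=\{1,\dots,k\}$ is the vector of Lemma~\ref{lem:simzero} with $n=k$, $d=k+1$ (Remark~\ref{rem:s-lem41}). That lemma shows it lies in $\ker\LL_1\cap\dots\cap\ker\LL_k$.

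\textbf{Step 3: linear independence.} The $k$ vectors $\mathbf s_{S,(r),\emptyset}$ have disjoint sets of summands, since the last column of every summand has its $1$ in row $r$. They are therefore linearly independent, and by the dimension count of Step 1 they form a basis of $Z_k$.

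Finally, each summand has $k$-support equal to $S$, of size $k$. No summand carries a decoration in the last column: taking $\bfj=\emptyset$ means the last-column decorations $\bfj+k$ are empty, and $\widetilde\bfj\in\Sigma_{\sigma(S)}$ lives in the first $k$ columns.

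The main obstacle is Step 1: matching the isotypic components correctly. We only know the left decomposition, so we cannot assume which $M^{\bs\lambda}$ pairs with $L_{((1),1^k)}$. The plan is to argue as in Lemma~\ref{lem:knd}.
\begin{itemize}
\item Any $MH_{k+1}$-simple with a nonzero vector in $Z_k$ must be $M^{((1),1^k)}$, by the tableau analysis.
\item $Z_k\neq0$ by Step 2.
\item So $M^{((1),1^k)}$ occurs, paired with some $L_{\bs\mu}$ for $\bs\mu\in M\Lambda_{k,k+1}$.
\item Every $\bs\mu$ with $s<k$ pairs with something having no such vector. For $s=0$ this is Proposition~\ref{prop:s=0}, since $\LL_1=e$ acts by $1$ there.
\end{itemize}
It remains to exclude $\bs\mu$ with $0<s<k$ from pairing with $M^{((1),1^k)}$. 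We expect to do this with the depth argument: $\ell$ kills the vectors of Lemma~\ref{lem:simzero}. Failing that, we will fall back on a dimension count, comparing $\dim M^{((1),1^k)}=k+1$ against the multiplicities forced by Proposition~\ref{prop:leftmod}.
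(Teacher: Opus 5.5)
Your Steps~2 and~3 are correct, but Step~1 does not prove $\dim Z_k=k$. The equality $\dim Z_k=\dim L_{((1),1^k)}$ assumes that $M^{((1),1^k)}$ is paired with $L_{((1),1^k)}$ in $MV_{k,k+1}$. That is an instance of Theorem~\ref{thm:MSWD}, which is not yet available: Proposition~\ref{prop:leftmod} only gives left multiplicities. You flag this, but neither proposed remedy works.

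\emph{The dimension count.} For $k\geq 2$ the three distinct bipartitions $((k),1)$, $((1^k),1)$, $((1),1^k)$ all lie in $M\Lambda_{k,k+1}$. Moreover $\dim M^{((k),1)}=\dim M^{((1^k),1)}=\dim M^{((1),1^k)}=k+1$. So the multiplicities leave all three $L$'s as possible partners.

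\emph{The $\ell$-argument.} On $L_{(\lambda,1^s)}=L_\lambda\otimes W_s$ the kernel of $\ell$ is $L_\lambda\otimes\operatorname{span}\{u_I : 1\in I\}$, which is nonzero for every $s\geq 1$. So knowing that $\ell$ kills your $k$ vectors only excludes $s=0$. To force depth $k$ you would need $\ell f_1\cdots f_j$ to kill them for all $j<k$, but you have no formula for the $f_i$-action. Alternatively you would need $\ell$ to kill all of $Z_k$, which presupposes the conclusion.

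The same conflation appears when you exclude $M^{(\emptyset,1^{k+1})}$ ``since $s\leq n$''. The correct argument, as in the paper, is that a one-dimensional right simple must pair with an $L_{\bs\lambda}$ of multiplicity one. In $M\Lambda_{k,k+1}$ only $((k+1),\emptyset)$ qualifies, and it pairs with $M^{((k+1),\emptyset)}$ by Proposition~\ref{prop:s=0}.

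Your route can be repaired, and it would then be a genuinely different proof. Once $M^{(\emptyset,1^{k+1})}$ is excluded and $Z_k\neq 0$, you get $Z_k\cong L_{\bs\mu}$ as $MU(k)$-modules, where $L_{\bs\mu}$ is the partner of $M^{((1),1^k)}$. Write $\epsilon_r$ for the $r$-th unit vector. You can then eliminate the alternatives for $k\geq 2$ as follows.

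\emph{$s=0$} is excluded by Proposition~\ref{prop:s=0}.

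\emph{$2\leq s\leq k-1$} is excluded because $\binom{k+1}{s}f^\lambda>k+1$.

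\emph{$((1^k),1)$} is excluded because $L_{((1^k),1)}\cong W_1$ has $\ell$-kernel only in weight $(1^k)+\epsilon_1$. But $\mathbf{s}_{S,(2),\emptyset}$ is killed by $\ell$ and has weight $(1^k)+\epsilon_2$.

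\emph{$((k),1)$} is excluded because it contains the weight $(k+1,0,\ldots,0)$. The corresponding weight space of $MV_{k,k+1}$ is spanned by matrices with all entries in row $1$, an $MH_{k+1}$-stable copy of $MV_{1,k+1}$. In it $\ker\LL_1\cap\ker\LL_2=0$, by the $n=1$ computation preceding Lemma~\ref{lem:knd}.

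The paper never identifies the partner of $M^{((1),1^k)}$. It splits $Z_k$ according to the last column. Each undecorated last column contributes exactly one dimension, by Lemma~\ref{lem:knd} (where one-dimensionality does pin down the pairing) together with Lemma~\ref{lem:simzero}. The pieces with a decorated last column are shown to vanish by induction on $k$ and the $\tau_{k-1}$-eigenvector argument.
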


\begin{proof}
We first show that $\bigcap_{i=1}^{k+1}\ker(\LL_i)=0$. Any non-zero vector in this intersection would span the one-dimensional representation $M^{(\emptyset,1^{k+1})}$ of $MH_{k+1}$. By Proposition~\ref{prop:leftmod}, the only $L_{\bs\lambda}$'s in $MV_{k,k+1}$ that pair with one-dimensional $MH_{k+1}$-representations correspond to $\bs\lambda=(k+1,\emptyset)$ (since $\bfla=(1^{k+1},\emptyset)$ and $\bfla=(\emptyset,1^{k+1})$ both contain a partition with more than $k$ parts).
However, by Proposition~\ref{prop:s=0}, none of the Jucys-Murphy elements act by zero on the isotypic component of this $\bs\lambda$. This proves the required claim.

For the rest of the lemma, we proceed by induction on $k$. For the base case when $k=1$, we need to show that the space $Z_1\sub MV_{1,2}$ is spanned by the element $\mathbf{s}_{S,{\mathbf{r}},{\mathbf{j}}}$ where $S=\{1\}$, ${\mathbf{r}}=\{1\}$ and ${\mathbf{j}}=\emptyset$ and that $\LL_2$ does not kill this vector. The base case of Lemma~\ref{lem:zerobasis} already established both of these facts.

The action of the first $k$ Jucys-Murphy elements on a decorated matrix leaves the last ($=(k+1)^{th}$) column invariant. Hence, we can write a decomposition of the space $Z_k$ depending on where the $1$ exists in the last column and whether it is decorated or not. More concretely, we can write:
\[Z_k=\bigoplus_{\theta}{Z_{\theta}},\]
where $\theta$ ranges over pairs $(\widetilde\bfr,\widetilde\bfj)$ and $Z_{\theta}$ is the subspace of $Z_k$ spanned by $m_{\bfr,\bfj}$'s where $\widetilde\bfr=\{i\}$ (with $1\leq i\leq k$) denotes the position of the 1 in the last column and $\widetilde\bfj$ is equal to $\{1\}$ or $\emptyset$ depending on whether the last column is decorated or not, respectively.

For pairs of the form $\theta=(\{i\},\emptyset)$, the space $Z_{\theta}$ can be embedded into the tensor space $MV_{k,k}$ by simply deleting the last column. This embedding respects the $MH_k$-action. In particular, the space $Z_{\theta}$ lies in the simultaneous zero eigenspace of the first $k$ Jucys-Murphy elements in $MV_{k,k}$. By the previous lemma, this space is one-dimensional showing that $\dim(Z_{\theta})=1$. Thus, we have:
\[\sum_{\theta=(\{i\},\emptyset)}\dim(Z_\theta) = k,\]
and so, to prove the lemma, it suffices to show that $Z_{\theta}=0$ whenever $\theta=(\{i\},\{1\})$ for any $i$.

So, suppose $\theta=(\{i\},\{1\})$. On the space $Z_{\theta}$, the first $k$, and more specifically the first $k-1$ Jucys-Murphy elements act by zero. By induction and Remark~\ref{rem:chopoff}(2), any decorations in the last two columns must lie in a row above the $(k-1)$-support of a simultaneous zero eigenvector for the first $k-1$ Jucys-Murphy elements and the $(k-1)$-support has size $k-1$. 

Finally, we suppose that $\theta=(\{1\},\{1\})$. 
By the discussion in the previous paragraph, all non-zero elements of $Z_{\theta}$ must have $(k-1)$-support equal to $\{k,k-1,\dots, 2\}$. By induction and Remark~\ref{rem:chopoff}(1), since $\LL_1, \LL_2, \dots, \LL_k$ do not have a simultaneous zero eigenvector in $MV_{k-1,k}$, the $1$ in the $k^{th}$ column of (any summand of) a non-zero element of $Z_{\theta}$ must lie in the first row. Thus, the space $Z_{\theta}$ is at most $1$-dimensional and (if it is non-zero) is spanned by the vector $\mathbf{s}_{S,{\bfr},{\bfj}}$ with $S=(k,k-1,\dots, 2)$, ${\bfr}=(1,1)$ and ${\bfj}=\{2\}$. We claim that $\mathbf{s}_{S,{\bfr},{\bfj}}\cdot \LL_{k}\neq 0$, which will show that $Z_\theta=0$.

To this end, we recall that $\LL_k=v^{-2}\tau_{k-1}\LL_{k-1}\tau_{k-1}$ and suppose, for the sake of contradiction, that $\mathbf{s}_{S,{\bfr},{\bfj}}\cdot \LL_{k}= 0$. Then, we must have $\mathbf{s}_{S,{\bfr},{\bfj}}\cdot \tau_{k-1}\LL_{k-1}= 0$ since $\tau_{k-1}$ is invertible. Furthermore, since $\LL_j$ commutes with $\tau_{k-1}$ for all $j<k-1$, we have that $\mathbf{s}_{S,{\bfr},{\bfj}}\cdot \tau_{k-1}\LL_{j}=\mathbf{s}_{S,{\bfr},{\bfj}}\cdot\LL_j\tau_{k-1}= 0$ since we already know that $\mathbf{s}_{S,{\bfr},{\bfj}}\cdot\LL_j=0$. Finally, using the fact that $\tau_{k-1}^2=(v^2-1)\tau_{k-1}+v^2$, we see that
\begin{align*}
\mathbf{s}_{S,{\bfr},{\bfj}}\cdot \tau_{k-1}\LL_{k}&=\mathbf{s}_{S,{\bfr},{\bfj}}\cdot v^{-2} \tau_{k-1}^2\LL_{k-1}\tau_{k-1} \\
&= \mathbf{s}_{S,{\bfr},{\bfj}}\cdot v^{-2} (v^2-1)\tau_{k-1}\LL_{k-1}\tau_{k-1}+\mathbf{s}_{S,{\bfr},{\bfj}}\cdot v^{-2} v^2 \LL_{k-1}\tau_{k-1} \\
&=(v^2-1) \mathbf{s}_{S,{\bfr},{\bfj}}\cdot \LL_k+\mathbf{s}_{S,{\bfr},{\bfj}}\cdot v^{-2} v^2 \LL_{k-1}\tau_{k-1}=0+0=0.
\end{align*}
Therefore, $\mathbf{s}_{S,{\bfr},{\bfj}}\cdot \tau_{k-1}$ is  a simultaneous zero eigenvector of $\LL_1, \LL_2, \dots, \LL_k$, and thus, $\mathbf{s}_{S,{\bfr},{\bfj}}\cdot \tau_{k-1}\in Z_\theta$ for $\theta=(\{1\},\{1\})$. However, since $Z_\theta$ is at most one-dimensional, this implies that $\mathbf{s}_{S,{\bfr},{\bfj}}$ must be an eigenvector for $\tau_{k-1}$. That this is not true follows immediately from the explicit formula for $\mathbf{s}_{S,{\bfr},{\bfj}}$. (For example, the action of $\tau_{k-1}$ swaps the $(k-1)^{th}$ and $k^{th}$ columns for some of the summands of  $\mathbf{s}_{S,{\bfr},{\bfj}}$, resulting in a $1$ in the first row of the $(k-1)^{th}$ column.)
\end{proof}

\begin{proof} [Proof of Lemma~\ref{lem:zerobasis}]
We use induction on $k$. The case $k=1$ has already been dealt with, and we now suppose that $k>1$ is arbitrary. By induction, the first $k$ Jucys-Murphy elements can not simultaneously act by zero on any non-zero vector of $MV_{k-1,d}$. Hence, by Remark~\ref{rem:chopoff}, any non-zero vector in $Z_k$ must have $k$-support of size at least $k$. The action of the first $k$ Jucys-Murphy elements on a decorated matrix leaves invariant the $(k+1)^{th}$ column and beyond. Hence, we can decompose the space $Z_k$ on the basis of $k$-support. That is, we can write:
\[Z_k=\bigoplus_{\theta} Z_{\theta},\]
where $\theta$ ranges over triples $(S,\widetilde{\bfr},\widetilde{\bfj})$, and $Z_{S,\widetilde{\bfr},\widetilde{\bfj}}$ is the subspace of $Z_k$ spanned by $m_{\mathbf{r},\bfj}$'s whose $k$-support is $S$, $\widetilde{\bfr}$ dictates the positions of $1$'s in the last $d-k$ columns and $\widetilde{\bfj}$ describes the set of decorations in the last $d-k$ columns. We fix one such $\theta$. As argued above, we can suppose that $|S|=k$ if $Z_{\theta}\neq 0$. Also, by Lemma~\ref{lem:knd-1} and Remark~\ref{rem:chopoff}(2), we can suppose that for any non-zero summand $m_{\bfr,\bfj}$ of any element in $Z_k$, all the decorations in the last $d-k$ columns lie above all the rows of the $(k-1)$-support.

If $\widetilde{\bfj}\in J_{\widetilde\bfr}(S)$, by Remark~\ref{rem:chopoff}(1), we can delete the last $d-k$ columns as well as any rows other than the ones indexed by $S$ and reduce to the case when $k=n=d$. Then, by Lemma~\ref{lem:knd}, we know that the space $Z_{(S,\widetilde\bfr,\widetilde\bfj)}$ is one-dimensional and is spanned by the vector $\mathbf{s}_{S,{\widetilde\bfr},{\widetilde\bfj}}$.

Otherwise, we can still use Remark~\ref{rem:chopoff}(2) to delete the last $d-k-1$ columns as well as any rows outside of $S$ and reduce to the case when $k=n=d-1$. In this case, we have $Z_{(S,\widetilde\bfr,\widetilde\bfj)}=0$ by Lemma~\ref{lem:knd-1}, since it implies that the $(k+1)^{th}$ column can not have a decoration in a row indexed by $S$.

Thus, we conclude that the vector $\mathbf{s}_{S,\widetilde{\bfr},\widetilde{\bfj}}$ spans the space $Z_{(S,\widetilde{\bfr},\widetilde{\bfj})}$, whenever it is non-zero, completing the proof of the lemma.
\end{proof}

Since the action of $\ell\in MU(n)$ commutes with that of $MH_d$, the space $Z_k$ is invariant under the action of $\ell$. Furthermore, the $1$-eigenspace of $\ell$ in $Z_k$ is $\ell\cdot Z_k$, since $\ell$ is an idempotent.

\begin{corollary} \label{cor:zerodim}
We have the following equalities:
\begin{equation} \label{eq:zerodim}
\dim(Z_k)=\sum_{t=0}^{\min(n,d)-k}\binom{n}{k+t}\binom{d-k}{t}n^{d-k-t},\end{equation}
 \begin{equation} \dim(\ell\cdot Z_k)=\sum_{t=0}^{\min(n,d)-k}\binom{n-1}{k+t}\binom{d-k}{t}n^{d-k-t}.\end{equation}
\end{corollary}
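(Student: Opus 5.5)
The plan is to count the basis of $Z_k$ from Lemma~\ref{lem:zerobasis} and then to exhibit a basis of $\ell\cdot Z_k$ with a similar count.

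\textbf{Dimension of $Z_k$.} By Lemma~\ref{lem:zerobasis}, $\dim(Z_k)$ is the number of triples $(S,\bfr,\bfj)$ with $S\in[n]_k$, $\bfr\in\{1,\ldots,n\}^{d-k}$ and $\bfj\in J_{\bfr}(S)$. I will organize the count by $t=|\bfj|$.

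A decoration $\bfj\in J_{\bfr}(S)$ with $|\bfj|=t$ is determined by two choices:
\begin{itemize}
\item the $t$ columns among the last $d-k$ that carry the decorations, giving $\binom{d-k}{t}$ choices;
\item the $t$ distinct rows of the decorated entries, all strictly smaller than $\min S$.
\end{itemize}
The decreasing condition forces the assignment of these rows to the chosen columns in decreasing order, so no further choice arises there. The remaining $d-k-t$ entries of $\bfr$ are arbitrary, giving $n^{d-k-t}$ choices.

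Next, choosing $S$ together with the $t$ decoration rows below $\min S$ is the same as choosing a $(k+t)$-subset $T$ of $\{1,\ldots,n\}$. One recovers $S$ as the top $k$ elements of $T$ (the $k$ largest values) and the decoration rows as the remaining $t$. This gives $\binom{n}{k+t}$ choices. Summing over $t$ yields \eqref{eq:zerodim}; terms with $k+t>n$ or $t>d-k$ vanish automatically, which accounts for the upper limit $\min(n,d)-k$.

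\textbf{Dimension of $\ell\cdot Z_k$.} I claim that $\ell\cdot Z_k$ has basis $\{\mathbf{s}_{S,\bfr,\bfj}: 1\notin S\cup(\text{rows of }\bfj)\}$. Here the rows of $\bfj$ means the set $\{r_j : j\in\bfj\}$. Equivalently, the indexing subset $T$ is drawn from $\{2,\ldots,n\}$. Granting this, the same count with $n-1$ rows available for $T$ gives the second formula.

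To prove the claim, I split the basis of $Z_k$ into two classes.

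\emph{Case $1\in T$.} Then $\ell$ kills $\mathbf{s}_{S,\bfr,\bfj}$. There are two subcases.
\begin{itemize}
\item If $1\in S$, this is exactly the argument of Lemma~\ref{lem:simzero}. Pair each summand with decoration set $\widetilde\bfj$ containing the first column $c$ where $1$ appears against the summand with $\widetilde\bfj\setminus\{c\}$. Corollary~\ref{cor:lconse} shows that the two $\ell$-images cancel because of the factor $(1-v^2)$.
\item If $1$ is a decoration row of $\bfj$, the decorated $1$ in row $1$ sits in the tail. Corollary~\ref{cor:lconse} again relates it to the undecorated version. The task here is to check that $\ell\cdot m$ for such summands pairs up and cancels. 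I expect this requires the pairing to be set up inside the sum of Corollary~\ref{cor:lconse}, and it may not cancel termwise.
\end{itemize}
If termwise cancellation fails in the second subcase, the fallback is the following rank argument. On $\ell$-images of basis vectors with row $1$ absent, $\ell$ acts as an invertible triangular map. This holds with respect to an order on decorations, with leading term $v^{-2\#}m_{\bfr,\bfj}$ by \eqref{eq:laction}. I would then combine this with a dimension argument on $(1-\ell)Z_k$.

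\emph{Case $1\notin T$.} Here $\ell\cdot\mathbf{s}_{S,\bfr,\bfj}$ has a leading term equal to $\mathbf{s}_{S,\bfr,\bfj}$ times a unit, with the extra terms decorating row-$1$ entries. Distinct leading terms make these images linearly independent, using the basis of $\im(\ell)$ in Corollary~\ref{cor:lconse}.

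\textbf{Expected main obstacle.} The hard step is showing $\ell$ kills the basis vectors in which $1$ is a decoration row (the second subcase above). A cleaner alternative I would also try is to compute $\dim(\ell Z_k)$ directly, as $\dim Z_k$ minus $\dim$ of the $\ell=0$ part. One could determine this by running the argument of Lemma~\ref{lem:zerobasis} on $\ell$-weight spaces, mimicking the refined character count at the end of Proposition~\ref{prop:leftmod}.
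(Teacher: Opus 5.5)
\textbf{Gap: the vectors with a row-$1$ decoration are not killed by $\ell$.} Your count of $\dim(Z_k)$ is correct and is the paper's argument. The problem is in the second half. Your assertion that $\ell$ kills $\mathbf{s}_{S,\bfr,\bfj}$ whenever $1\notin S$ but $1$ is a decoration row of $\bfj$ is false, so no pairing argument can prove it.

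A counterexample: take $n=d=2$, $k=1$, $S=\{2\}$, $\bfr=(1)$, $\bfj=\{1\}$. Then $\mathbf{s}_{S,\bfr,\bfj}=(1-v^2)m_{(2,1),\{2\}}+m_{(2,1),\{1,2\}}$, and \eqref{eq:laction} gives $\ell\cdot\mathbf{s}_{S,\bfr,\bfj}=(v^2-1)\,\ell\cdot\mathbf{s}_{S,\bfr,\emptyset}\neq 0$.

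Your fallback does not repair this. Triangularity of $\ell$ on the images of the triples with $1\notin T$ only gives the lower bound $\dim(\ell\cdot Z_k)\geq\#\{(S,\bfr,\bfj):1\notin T\}$. The matching upper bound is exactly what the false vanishing claim was meant to supply, and the proposed "dimension argument on $(1-\ell)Z_k$" gives no way to obtain it.

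\textbf{The missing idea.} This is what the paper uses. Suppose $1\notin S$ and $r_j=1$ for some $j\in\bfj$. Applying Corollary~\ref{cor:lconse} summand by summand shows that $\ell\cdot\mathbf{s}_{S,\bfr,\bfj}$ is a nonzero scalar multiple of $\ell\cdot\mathbf{s}_{S,\bfr,\bfj\setminus\{j\}}$. The point to check is that the scalar is the same for every summand $(\sigma,\widetilde\bfj)$:
\begin{itemize}
\item decorations have strictly decreasing rows, so the decorated $1$ in row $1$, sitting in column $j+k$, is the last decorated column;
\item since $1\notin S$, none of the first $k$ columns has its $1$ in row $1$.
\end{itemize}
Hence the exponent in Corollary~\ref{cor:lconse} counts only row-$1$ entries in columns strictly between $k$ (or the previous tail decoration) and $j+k$. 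This count does not depend on $\sigma$ or $\widetilde\bfj$.

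So these images are redundant rather than zero, and $\{\ell\cdot\mathbf{s}_{S,\bfr,\bfj}:1\notin T\}$ spans $\ell\cdot Z_k$. Combined with your linear-independence argument, this gives the second formula.

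If you prefer to keep your kernel viewpoint, replace $\mathbf{s}_{S,\bfr,\bfj}$ by $\mathbf{s}_{S,\bfr,\bfj}-c\,\mathbf{s}_{S,\bfr,\bfj\setminus\{j\}}$, where $c$ is that uniform scalar. These vectors are killed by $\ell$ and differ from the original basis by a unitriangular change, so rank--nullity on $Z_k$ finishes the proof.

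Also, you stated that the basis of $\ell\cdot Z_k$ consists of the vectors $\mathbf{s}_{S,\bfr,\bfj}$ themselves. It should be their images $\ell\cdot\mathbf{s}_{S,\bfr,\bfj}$, since $\ell$ adds row-$1$ decorations.
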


\begin{proof}
By Lemma~\ref{lem:zerobasis}, the space $Z_k$ has a basis given by elements of the form $\mathbf{s}_{S,{\mathbf{r}},{\mathbf{j}}}$ where $S\in [n]_k$, ${\mathbf{r}}=(r_{k+1},r_{k+2},\dots, r_{d})\in\{1,2,\dots,n\}^{d-k}$ and ${\bfj}\in J_{{\bfr}}(S)$. To count the number of these triples $(S,\bfr,\bfj)$, suppose we assume that $|{\bfj}|=t$ for some $0\leq t\leq \min\{n,d\}-k$. Then, we can select the subset $S$ as well as the $t$ rows that are being decorated in $\binom{n}{k+t}$ ways and the $t$ columns being decorated in ${d-k\choose t}$ ways. The remaining $d-t-k$ entries of ${\bfr}$ can be chosen arbitrarily. Hence, summing over all $t$, we get:
\[\dim(Z_k)=\sum_{t=0}^{\min(n,d)-k}{n \choose k+t}{d-k \choose t}n^{d-k-t},\]
proving equality~\eqref{eq:zerodim}.

Next, we analyze the $\ell$-action. If $1\in S$, we must have ${\bfj}=\emptyset$, and so, $\ell\cdot \mathbf{s}_{S,{\mathbf{r}},{\mathbf{j}}}=0$ as a consequence of Corollary~\ref{cor:lconse}, just as in the proof of Lemma~\ref{lem:simzero}. Now, suppose $1\not\in S$. For any triple $(S,\bfr,\bfj)$ such that $1\not\in S$ and $r_j=1$ for some $j\in \bfj$, we have by Corollary~\ref{cor:lconse} that $\ell\cdot \mathbf{s}_{S,{\mathbf{r}},{\mathbf{j}}}$ is a non-zero scalar multiple of $\ell\cdot \mathbf{s}_{S,{\mathbf{r}},{\mathbf{j}}\sm \{j\}}.$ Therefore, a spanning set for the space $\ell\cdot Z_k$ is given by $\ell\cdot \mathbf{s}_{S,{\mathbf{r}},{\mathbf{j}}}$ for those triples $(S,{\bfr},{\bfj})$ such that $1\not\in S$ and for all $j\in {\bfj}$, we have $r_j\neq 1$. Furthermore, these elements are linearly independent as they lie in the span of different basis elements $m_{\widetilde\bfr,\widetilde\bfj}$ in $MV_{n,d}$, which is a consequence of Equation~\eqref{eq:laction}:
\[\ell\cdot \mathbf{s}_{S,{\mathbf{r}},{\mathbf{j}}}=v^{-2\#\{i> \max\bfj~|~r_i=1\}}\left(\mathbf{s}_{S,{\mathbf{r}},{\mathbf{j}}}+\sum\limits_{t>\max\bfj,~r_t=1}\mathbf{s}_{S,{\mathbf{r}},{\mathbf{j}}\cup\{t\}}\right).\]
Therefore, the dimension of the space $\ell\cdot Z_k$ is exactly the number of such triples. This number can be computed by repeating the computation we did for $\dim(Z_k)$ above after replacing ${n\choose k+t}$ by ${n-1 \choose k+t}$ to account for the fact that the first row shouldn't be included in $S$ and shouldn't be decorated in $\bfj$, thus completing the proof.
\end{proof}

\subsection{Proof of the base case $s=\min(n,d)$ of Theorem~\ref{thm:MSWD}.} \label{sec:base}
If $s=d\leq n$, by Lemma~\ref{lem:zerobasis}, there exist non-zero vectors in $MV_{n,d}$ on which all the Jucys-Murphy elements $\LL_1,\ldots,\LL_d$ act by zero. These vectors each span one-dimensional $MH_d$-representations corresponding to the bipartition $(\emptyset,1^d)$. By Proposition~\ref{prop:leftmod}, the only $L_{\bs\lambda}$'s that pair with a one-dimensional representation of $MH_d$ correspond to $\bs\lambda=(d,\emptyset)$, $\bs\lambda=(1^d,\emptyset)$ or $\bs\lambda=(\emptyset,1^d)$. In the first two cases, each of the $\LL_i$'s act with non-zero eigenvalues by Proposition~\ref{prop:s=0}. Thus, the representation $M^{(\emptyset, 1^d)}$ of $MH_d$ must pair with the $L_{\bs\lambda}$ when $\bs\lambda=(\emptyset,1^d)$, proving the claim.

Next, suppose $s=n<d$ and $\bs\lambda=(\lambda,1^n)$ so that $|\lambda|+n=d$. Then by Lemma~\ref{lem:zerobasis}, the space $Z_n$ is spanned by elements of the form $\mathbf{s}_{S,{\mathbf{r}},{\mathbf{j}}}$ with $S=\{1,2,\dots,n\}$, arbitrary ${\bfr}$ chosen in $\{1,2,\dots,n\}^{d-n}$ and ${\bfj}=\emptyset$. Lemma~\ref{lem:simzero} (along with Remark~\ref{rem:s-lem41}) shows that the idempotent $\ell$ acts by zero on each of these elements. This shows that, as a left $MU(n)$-module, the space $Z_n$ lies in the span of the isotypic components of those $L_{\bs\lambda}$'s on which $\ell$ acts by zero, that is, those $\bs\lambda$'s that have $s=n$.

For any $\lambda$ with $|\lambda|=d-n$ and a tableau $U$ of shape $\lambda$, by Theorem~\ref{thm:QSWD}, there exists an element in the quantum tensor space $V_{n,d-n}$:
\[s_{\lambda}=\sum_{\mathbf{r}\in \Pi} c_{\mathbf{r}}m_{\mathbf{r}},\]
which corresponds to the tableau $U$ with respect to the $H_{d-n}$-action and the quantum group $\uvsln$ acts on it with weight $\lambda$. (Here, $\Pi\sub\{1,2,\dots,n\}^{d-n}$ is some index set and $c_{\mathbf{r}}$ are some scalars.) That is, $s_{\lambda}=w_{\lambda}\otimes x_U \in L_{\lambda}\otimes M^{\lambda}\sub V_{n,d-n}$, where $w_{\lambda}$ is the unique (up to scaling) highest weight vector in $L_{\lambda}$ and $x_U$ is the basis element of $M^{\lambda}$ mentioned in Remark~\ref{rem:hecke-basis}.  Then, we can construct:
\[s_{\bs\lambda}:=\sum_{\mathbf{r}\in \Pi} c_{\mathbf{r}}\mathbf{s}_{S,\mathbf{r},\emptyset},\]
where $S=\{n,n-1,\dots,1\} \in [n]_n$. 

Consider the linear map:
\begin{align*}
\pi_{n,d}:V_{n,d-n}&\to MV_{n,d}\\
m_{\mathbf{r}}&\mapsto \mathbf{s}_{S,\mathbf{r},\emptyset}
\end{align*}
which intertwines the right action of $\tau_{i-n}\in H_{d-n}$ on the domain with the action of $\tau_{i}\in MH_d$ on the co-domain for $n+1\leq i\leq d-1$, as there are no decorations in the last $d-n$ columns of any summand of $\mathbf{s}_{S,\mathbf{r},\emptyset}$. Note that $\pi_{n,d}(s_{\lambda})=s_{\bs\lambda}$.

By Lemma~\ref{lem:zerobasis}, the elements $\LL_1,\LL_2,\dots,\LL_n$ act on $s_{\bs\lambda}$ by zero, whereas $\LL_{n+1}$ acts on it by $1$. 
Furthermore, for $n+2\leq j\leq d$, by the equivariance of the map $\pi_{n,d}$ mentioned above, the Jucys-Murphy element $\LL_j \in MH_{d}$ acts on $s_{\bs\lambda}$ by the same eigenvalue as the Jucys-Murphy element $\LL_{j-n} \in H_{d-n}$ acts on $s_{\lambda}$. Therefore, all the Jucys-Murphy elements of $MH_d$ act on $s_{\bs\lambda}$ by exactly the same eigenvalues as they do on the basis element $x_{(U_1,U_2)}\in MV^{\bs\lambda}$, where $U_1$ is obtained from $U$ by increasing each entry by $n$ and $U_2$ is the one-column tableau with entries $\{1,2,\dots, n\}$ increasing from top to bottom. Hence, $s_{\bs\lambda}$ lies in the right isotypic component of the simple $MH_d$-representation $M^{\bs\lambda}$.

Recall from Equation~\eqref{eq:kaction} that $k_i$ acts on $m_{\bfr,\bfj}$ by the eigenvalue $v^{\#\{j:r_j=i\}-\#\{j:r_{j}=i+1\}}$. Therefore, since every summand in $\mathbf{s}_{S,\mathbf{r},\emptyset}$ has exactly one more $1$ in each row as $m_{\bfr}$, the $k_i$'s act on $\mathbf{s}_{S,\mathbf{r},\emptyset}$ with the same weight as they do on $m_{\bfr}$. Hence, since the vector $s_{\lambda}$ has weight $\lambda$ with respect to the quantum group $\uvsln$, we must also have that $s_{\bs\lambda}$ has weight $\lambda$ with respect to $MU(n)$. Furthermore, since $\ell$ acts on it by zero, the element $s_{\bs\lambda}$ lies in the left isotypic component of the $MU(n)$-representation $L_{\bs\lambda'}$ for some $\bs\lambda'=(\lambda',1^n)$ such that $\lambda'\geq \lambda$.

Thus, we conclude that for all $\bs\lambda=(\lambda,1^n)\in M\Lambda_{n,d}$, the isotypic component of the $MH_d$-representation $M^{\bs\lambda}$ in $MV_{n,d}$ is non-zero. Furthermore, if the representation $M^{\bs\lambda}$ pairs with $L_{\bs\lambda'}$ in the mirabolic tensor space, then $\bs\lambda'=(\lambda',1^n)$ for some $\lambda'\geq \lambda$. Using induction on $\lambda$ with respect to the partial order on the set of partitions of $d-n$, we conclude that $M^{\bs\lambda}$ pairs exactly with $L_{\bs\lambda}$ for all $\bs\lambda$ when $s=d$, completing the base case of the induction.

\subsection{Induction step}  \label{sec:ind}

Fix a bipartition $\bs\lambda=(\lambda,1^s)$. We assume that Theorem~\ref{thm:MSWD} has been proven whenever the second partition has more than $s$ parts. That is, we assume that $L_{\bs\lambda'}$ pairs with $M^{\bs\lambda'}$ in the mirabolic tensor space whenever $\bs\lambda'=(\lambda',1^{s'})$ for $s'>s$.

Recall the subspace $Z_s$ of all elements in the mirabolic tensor space on which the first $s$ Jucys-Murphy elements act by zero. Note that since the $MU(n)$-action commutes with the $MH_d$-action, the space $Z_s$ has a left $MU(n)$-action. We can write $Z_s=Z\oplus Z'$, where $Z$ is the intersection of $Z_s$ with the isotypic components of those $L_{\bs\lambda'}$'s having depth at most $s$ and $Z'$ is the intersection of $Z_s$ with the isotypic components of those $L_{\bs\lambda'}$'s (and by induction, those $M^{\bs\lambda'}$'s) whose depth is bigger than $s$. We remind the reader that the simple $MU(n)$-module $L_{(\lambda,1^r)}$ has depth exactly $r$ (see Example~\ref{ex:(co-)depth}).

\begin{lemma} \label{lem:intcompu}
\begin{enumerate}
\item The space $Z$ is the intersection of $Z_s$ with the isotypic components of those $L_{\bs\lambda'}$'s whose depth is exactly $s$.
\item For any $k\geq s+1$, we have the equality $\ker(\LL_k|_{Z})=0$. In particular, the Jucys-Murphy element $\LL_{s+1}$ acts on $Z$ by 1.
\end{enumerate}
\end{lemma}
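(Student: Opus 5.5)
For part (1) we have to exclude, inside $Z$, isotypic components of $L_{\bs\lambda'}=L_{(\lambda',1^{r})}$ with $r<s$. The plan is to combine the $(MU(n),MH_d)$-bimodule decomposition~\eqref{eq:DCT} with Theorem~\ref{thm:JM_action}. The $L_{\bs\lambda'}$-isotypic component of $MV_{n,d}$ is a direct sum of copies of some simple $M^{\bs\mu}$. Intersecting it with $Z_s$ therefore gives $L_{\bs\lambda'}\otimes(M^{\bs\mu}\cap Z_s)$. By Remark~\ref{rem:simulbases}, this intersection is spanned by those $x_U$, $U\in T(\bs\mu)$, on which $\LL_1,\dots,\LL_s$ all act by zero. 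These are exactly the bitableaux with $1,\dots,s\in U^2$, which forces $\bs\mu=(\mu,1^{s'})$ with $s'\ge s$.

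So the task is to show that $L_{(\lambda',1^r)}$ with $r<s$ cannot pair with such an $M^{\bs\mu}$. There are two possibilities for $s'$.
\begin{itemize}
\item If $s'>s$, the induction hypothesis says $M^{\bs\mu}$ pairs with $L_{\bs\mu}$, which has depth $s'\neq r$.
\item If $s'=s$, we argue by counting. I would compare the multiplicities given by Proposition~\ref{prop:leftmod} with the known pairings at depths $>s$, and count the pairings already fixed by induction. I would also attempt a depth-based argument: a depth-$r$ module with $r<s$ contains a vector $y\otimes u_I$ with $1\notin I$ on which $\ell$ acts by $1$. Looking for a contradiction from this fact alone, however, seems weaker.
\end{itemize}

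My preferred route for the case $s'=s$ is dimension counting. Corollary~\ref{cor:zerodim} computes $\dim Z_s$ and $\dim(\ell\cdot Z_s)$. On the representation side, Proposition~\ref{prop:leftmod} together with Theorem~\ref{thm:JM_action} give the dimensions of the $(\LL_1,\dots,\LL_s)$-zero parts of each $L_{\bs\lambda'}\otimes M^{\bs\lambda'}$, provided the pairing were the expected one. Running this comparison against $\dim Z_s$ should leave no room for depth-$<s$ components to contribute. This is the step I expect to be the main obstacle: the bookkeeping must be done carefully, and it may be cleaner to use $\ell$-eigenspaces, i.e. $\dim(\ell\cdot Z_s)$ versus $\dim Z_s$. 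The reason is that $\ell$ acts by zero on depth-$n$ pieces, and weight-space counting through Lemma~\ref{lem:oldepth} distinguishes depths.

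For part (2), once (1) holds, every component of $Z$ is $L_{\bs\lambda'}\otimes (M^{\bs\mu}\cap Z_s)$ with $\bs\lambda'$ of depth $s$ and $\bs\mu=(\mu,1^{s'})$, $s'\ge s$. Since $s'=s$ is forced by the same argument, the relevant $x_U$ have $U^2$ exactly a column of size $s$ containing $1,\dots,s$, so $U^2=\{1,\dots,s\}$. Every $k\ge s+1$ therefore lies in $U^1$, and $\LL_k$ acts by $v^{2c_U(k)}\neq0$, giving $\ker(\LL_k|_Z)=0$. The entry $s+1$ must occupy the top-left box of $U^1$, which has content $0$, so $\LL_{s+1}$ acts by $v^0=1$.
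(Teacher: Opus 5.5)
Your reduction in part (1) is sound, apart from a point discussed in the next paragraph: only partners $M^{(\mu,1^{s'})}$ with $s'\ge s$ survive in $Z_s$, and $s'>s$ is handled by induction. But the case $s'=s$ is the whole content of (1), and you leave it as a plan that would not close it as stated. Comparing total dimensions only yields $\dim Z$, and $\dim Z$ does not record the depths of the simple summands of $Z$. You do name the right extra quantity, $\dim(\ell\cdot Z_s)$. What is missing is the mechanism that turns it into a depth constraint; the reason you offer ($\ell$ killing depth-$n$ pieces, Lemma~\ref{lem:oldepth}) is not it. The argument needs two steps.

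\emph{Exact dimensions.} By the induction hypothesis, each $L_{(\lambda',1^{s'})}$ with $s'>s$ contributes $\binom{n}{s'}\dim L_{\lambda'}\cdot\binom{d-s}{s'-s}f^{\lambda'}$ to $\dim Z'$. It contributes the same with $\binom{n-1}{s'}$ in place of $\binom{n}{s'}$ to $\dim(\ell\cdot Z')$. Subtracting from Corollary~\ref{cor:zerodim} gives exactly $\dim Z=\binom{n}{s}n^{d-s}$ and $\dim(\ell\cdot Z)=\binom{n-1}{s}n^{d-s}$.

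\emph{Ratio argument.} Since $\ell$ acts on $L_{\lambda'}\otimes W_r$ only through $W_r$, every simple of depth $r$ satisfies $\dim(\ell\cdot L_{(\lambda',1^r)})/\dim L_{(\lambda',1^r)}=\binom{n-1}{r}/\binom{n}{r}=(n-r)/n$, which is strictly decreasing in $r$. Now $Z$ is an $MU(n)$-module all of whose simple summands have depth $r\le s$. So its ratio $(n-s)/n$ is a weighted average of numbers $(n-r)/n\ge (n-s)/n$, and equality forces $r=s$ for every summand. This mediant argument is exactly the paper's proof of (1).

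There is also a smaller hole in your handling of $s'>s$, which both parts rely on. The induction hypothesis only concerns $(\mu,1^{s'})\in M\Lambda_{n,d}$. It therefore does not exclude an occurring $M^{(\mu,1^{s'})}$ with $s'>s$ and $\mu$ having more than $n$ parts, paired with a module of depth $\le s$; for such $\mu$ there is no $L_{(\mu,1^{s'})}$ to appeal to. In the paper this possibility is ruled out by the inclusion--exclusion count in (2), which shows $Z_s\cap\sum_{t>s}\ker\LL_t=Z'$.

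You can close the hole more cheaply with Corollary~\ref{cor:zerodim} at $k=s+1$. The known components $L_{\bs\lambda'}\otimes M^{\bs\lambda'}$ with $s'\ge s+1$ contribute to $\bigcap_{i\le s+1}\ker\LL_i$ a total of $\sum_t\binom{n}{s+1+t}\binom{d-s-1}{t}n^{d-s-1-t}$, which is already its full dimension. Any further $M^{(\mu,1^{s'})}$ with $s'\ge s+1$ would add to this space, so none exists.

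With that in place, your proof of (2) is correct and is a genuinely lighter route than the paper's. Instead of the inclusion--exclusion, you read $\ker(\LL_k|_Z)=0$ directly off Theorem~\ref{thm:JM_action}. Every partner occurring in $Z$ is then some $M^{(\mu,1^s)}$, which forces $U^2=\{1,\dots,s\}$ and puts $s+1$ in the corner of $U^1$, with content $0$. Note that this argument does not actually need (1): excluding $s'>s$ only uses that the summands of $Z$ have depth $\le s$, which holds by the definition of $Z$.
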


\begin{proof}
($1$) Fix $\bs\lambda'=(\lambda',1^{s'})$ for some $s'>s$. The space $M^{\bs\lambda'}$ has a basis given by $x_U$ where $U\in T(\bs\lambda')$, such that exactly $s'$ of the $d$ Jucys-Murphy elements act by zero on any $x_U$. (The last assertion follows from Theorem~\ref{thm:JM_action}.) Therefore, the dimension of the space $\cap_{i=1}^s \ker(\LL_i|_{M^{\bs\lambda'}})$ is equal to
\[\frac{{d-s\choose s'-s}}{{d\choose s'}}\dim(M^{\bs\lambda'})\Big(={d-s\choose s'-s}\dim(M^{\lambda'})\Big).\]

The dimension of the space $Z'$ is equal to the sum of dimensions of the simultaneous eigenspaces $\cap_{i=1}^s \ker(\LL_i)$ in the isotypic components of those $M^{\bs\lambda'}$'s such that $s'>s$.  By induction, the isotypic component of such an $M^{\bs\lambda'}$ is equal to $L_{\bs\lambda'}\otimes M^{\bs\lambda'}$. Therefore, we have that (all the sums below are over partitions with at most $n$ parts):
\begin{align*}
\dim(Z')&=\sum_{s'=s+1}^{\min(n,d)}\dfrac{\binom{d-s}{s'-s}}{\binom{d}{s'}}\sum_{\substack{\bs\lambda'=(\lambda',1^{s'})\\ |\lambda'|=d-s'}} \dim(L_{\bs\lambda'})\dim(M^{\bs\lambda'})\\
&=\sum_{t=1}^{\min(n,d)-s}\frac{{d-s\choose t}}{{d\choose s+t}}\sum_{\substack{\bs\lambda'=(\lambda',1^{s+t})\\ |\lambda'|=d-s-t}} \dim(L_{\bs\lambda'})\dim(M^{\bs\lambda'})&(\textrm{where }t=s'-s)\\
&=\sum_{t=1}^{\min(n,d)-s}\frac{{d-s\choose t}}{{d\choose s+t}}\sum_{\substack{|\lambda'|=d-s-t}}\dim(L_{\lambda'})\dim(M^{\lambda'}){n\choose s+t}{d\choose s+t}\\
&=\sum_{t=1}^{\min(n,d)-s}{n\choose s+t}{d-s\choose t}n^{d-s-t},
\end{align*}

where the last equality follows from Theorem~\ref{thm:QSWD} and the fact that $\dim(V_{n,d-s-t})=n^{d-s-t}$.

Therefore, by Corollary~\ref{cor:zerodim}, the dimension of $Z$ is $\dim(Z_s)-\dim(Z')={n \choose s}n^{d-s}$.
Next, by a computation very similar to the one above, we can compute:
\[\dim(\ell\cdot Z')=\sum_{t=1}^{\min(n,d)-s}{n-1\choose s+t}{d-s\choose t}n^{d-s-t}.\]
(The only step that changes in the computation is that we need to replace the isotypic component $L_{\bs\lambda'}\otimes M^{\bs\lambda'}$ with its $\ell=1$ eigenspace. That's the same as multiplying the summand of the computation above by the fraction ${n-1 \choose s+t}/{n \choose s+t}$, since $L_{\bs\lambda'}$ has depth $s'=s+t$.)

Therefore, by Corollary~\ref{cor:zerodim}, the dimension of the $1$-eigenspace of $\ell$ in $Z$ is ${n-1 \choose s}n^{d-s}$. We observe that:
\[\frac{\dim(\ell\cdot Z)}{\dim(Z)} = \frac{{n-1 \choose s}n^{d-s}}{{n \choose s}n^{d-s}}=\frac{n-s}{n}.\]
Note that for any $L_{\bs\lambda'}$ for which $s'\leq s$, we have 
\[\frac{\dim(\ell\cdot L_{\bs\lambda'})}{\dim(L_{\bs\lambda'})}=\frac{n-s'}{n}\geq \frac{n-s}{n}.\]

Since $Z$ lies in the span of such $L_{\bs\lambda'}$'s, the above equality along with the \emph{mediant inequality} implies that $Z$ lies in the span of those $L_{\bs\lambda'}$'s whose depth is exactly $s$.

($2$) Note that we have inclusions:
\[Z'\sub Z_s \cap \Big(\sum_{t=s+1}^d \ker(\LL_t)\Big) \sub Z_s.\]
We prove the required claim by showing that the first inclusion is an equality: this would imply that $\ker(\LL_t|_{Z_s})=\ker(\LL_t|_{Z'})$ and since $(Z)\LL_t\subset Z$, $(Z')\LL_t\subset Z'$, we have that $\ker(\LL_t|_{Z_s})=\ker(\LL_t|_{Z})\oplus \ker(\LL_t|_{Z'})$, so we would indeed get that $\ker(\LL_t|_{Z})=0$. 

We show this by proving that the dimensions of the two spaces are equal. To compute the dimension of the space $Z_s \cap \Big(\sum_{t=s+1}^d \ker(\LL_t)\Big) = \bigcap_{t=1}^s \ker(\LL_t) \cap \Big(\sum_{t=s+1}^d \ker(\LL_t)\Big)$, we use inclusion-exclusion. This is justified by Remark~\ref{rem:simulbases}(1), which says that performing inclusion-exclusion on the dimensions of these spaces is equivalent to performing set-theoretic inclusion-exclusion on their simultaneous bases.

Therefore, we have
\begin{align*}
&\dim\Bigg(\bigcap_{t=1}^s \ker(\LL_t) \cap \Big(\sum_{t=s+1}^d \ker(\LL_t)\Big)\Bigg)\\
=&\sum_{k=1}^{\min(n,d)-s}\sum_{s+1\leq i_1 < i_2<\dots<i_k\leq d} (-1)^{k-1} \dim\Big(\bigcap_{t=1}^s \ker(\LL_t) \cap\bigcap_{t=1}^k \ker(\LL_{i_t})\Big)\\
=&\sum_{k=1}^{\min(n,d)-s}\sum_{s+1\leq i_1 < i_2<\dots<i_k\leq d} (-1)^{k-1} \dim\Big(\bigcap_{t=1}^{s+k} \ker(\LL_t)\Big)\\
=&\sum_{k=1}^{\min(n,d)-s}(-1)^{k-1}{d-s\choose k} \dim\Big(\bigcap_{t=1}^{s+k} \ker(\LL_t)\Big)\\
=&\sum_{k=1}^{\min(n,d)-s}(-1)^{k-1}{d-s\choose k} \sum_{t=0}^{\min(n,d)-s-k}{n \choose s+k+t}{d-s-k \choose t}n^{d-s-t-k} &(\textrm{By Equation~\eqref{eq:zerodim}})\\
=&\sum_{l=1}^{\min(n,d)-s}{n \choose s+l}n^{d-s-l}\sum_{k=1}^{l}(-1)^{k-1}{d-s\choose k}{d-s-k \choose l-k} &(\textrm{where }l=k+t)\\
=&\sum_{l=1}^{\min(n,d)-s}{n \choose s+l}{d-s\choose l}n^{d-s-l} 
\\
=&\dim(Z').
\end{align*}
The second equality above holds since the simultaneous zero eigenspaces of any $s+k$ Jucys-Murphy elements have the same dimension. The expression for $\dim(Z')$ was computed in the proof of the first part of the Lemma.
\end{proof}

Next, we construct a subspace $\mathcal{Z}$ of $Z_s$ as follows.
Let $S=\{n,n-1,\dots,n-s+1\}\in [n]_s$. Fix arbitrary ${\mathbf{r}}\in\{1,2,\dots,n\}^{d-s}$. Define:
\[\mathbf s_{S,{\mathbf{r}}} = \sum_{{\mathbf{j}}\in J_{\bfr}(n-s+1)}\mathbf s_{S,{\mathbf{r}},{\mathbf{j}}}.\]
Recall that $J_{\bfr}(n-s+1)$ is the set of decorations ${\mathbf{j}}\in J_{\bfr}$ such that $a\in {\mathbf{j}}$ only if $r_a\leq n-s$.

We define $\mathcal{Z}:=\operatorname{Span}\{\mathbf s_{S,{\mathbf{r}}}~|~\bfr\in\{1,2,\dots,n\}^{d-s}\}$). As these elements are all linearly independent, we have that $\dim(\mathcal{Z})=n^{d-s}$. Furthermore, by Lemma~\ref{lem:zerobasis}, the first $s$ Jucys-Murphy elements act by zero on $\mathcal{Z}$.

Recall the map $\phi_{n,d}^{k,t}:MV_{n,d}^{k,t}\to MV_{n,d}$ defined via Equation~\eqref{eq:phindkt} for $0\leq k\leq n$ and $0\leq t\leq d$. We consider the case when $k=n-s$ and $t=s$. By \eqref{eq:phi-kt} we have that $\mathbf s_{S,{\mathbf{r}}}=\phi_{n,d}^{n-s,s}(\mathbf s_{S,\bfr,\emptyset})\in\im(\phi_{n,d}^{n-s,s})$ for all $\bfr$. Therefore, by Corollary~\ref{lem:averowssl}, the elements $\mathbf s_{S,{\mathbf{r}}}$ have co-depth at least $n-s$. Therefore, the space $\mathcal{Z}$ lies in the span of the isotypic components of those $L_{\bs\lambda'}$'s that have co-depth at least $n-s$ (or depth at most $s$), and thus, $\mathcal{Z}\sub Z$. Then, by Lemma~\ref{lem:intcompu}(1), the space $\mathcal{Z}$ must lie in the span of the isotypic components of those $L_{\bs\lambda'}$'s that have depth exactly $s$, and thus by Lemma~\ref{lem:oldepth}(1), the elements $\mathbf s_{S,{\mathbf{r}}}$  must have co-depth exactly $n-s$. As $\mathcal{Z}$ can't have any non-zero elements with a higher co-depth, each non-zero linear combination of the  $\mathbf s_{S,{\mathbf{r}}}$'s also has co-depth $n-s$. Lastly, Lemma \ref{lem:intcompu}(2) shows that the Jucys-Murphy element $\LL_{s+1}$ acts on the space $\mathcal{Z}$ by 1.

Now we finish the induction step of the proof of Theorem~\ref{thm:MSWD} using an argument fairly similar to the one used in the proof of the base case (see Section~\ref{sec:base}). For any $\lambda$ with $|\lambda|=d-s$ and at most $n$ parts, we consider $\bs\lambda=(\lambda,1^s)\in M\Lambda_{n,d}$. Given a tableau $U$ of shape $\lambda$, by Theorem~\ref{thm:QSWD}, there exists an element in the quantum tensor space $V_{n,d-s}$:
\[s_{\lambda}=\sum_{\mathbf{r}\in \Pi} c_{\mathbf{r}}m_{\mathbf{r},},\]
which corresponds to the vector $x_U\in M^{\lambda}$ with respect to the $H_{d-s}$-action and the quantum group $\uvsln$ acts on it with weight $\lambda$. (Here, $\Pi\sub\{1,2,\dots,n\}^{d-n}$ is some index set and $c_{\mathbf{r}}$ are some scalars.) More precisely, this is exactly the element $w_{\lambda}\otimes x_U\in V_{n,d-n}$ considered in Section~\ref{sec:base}. Then, we can define:
\[s_{\bs\lambda}:=\sum_{\mathbf{r}\in \Pi} c_{\mathbf{r}}\mathbf{s}_{S,\mathbf{r}}=\sum_{\mathbf{r}\in \Pi} c_{\mathbf{r}}\phi_{n,d}^{n-s,s}(\mathbf s_{S,\bfr,\emptyset})\in \mathcal{Z}.\]
Consider the linear map:
\[\pi_{s,n,d}:V_{n,d-s}\to MV_{n,d}\]
\[m_{\bfr}\mapsto \mathbf{s}_{S,\bfr,\emptyset}.\]
This map $\pi_{s,n,d}$ intertwines the right action of $\tau_{i-s}\in H_{d-s}$ on the domain with the action of $\tau_i\in MH_d$ on the co-domain, as there are no decorations in the last $d-s$ columns of any summand of $\mathbf{s}_{S,\mathbf{r},\emptyset}$. We note that $s_{\bs\lambda}=\phi_{n,d}^{n-s,s}\circ \pi_{s,n,d}(s_\lambda).$

Then, by an argument similar to the one in Section~\ref{sec:base}, the Jucys-Murphy elements of the mirabolic Hecke algebra $MH_d$ act on $s_{\bs\lambda}$ exactly by the same eigenvalues as they do on the element $x_{(U_1,U_2)}\in M^{\bs\lambda}$, where $U_1$ is obtained from $U$ by increasing each entry by $s$ and $U_2$ is the one-column tableau with entries $\{1,2,\dots, s\}$ increasing from top to bottom. This is because $\LL_k$ with $1\leq k\leq s$ all act by zero and $\LL_{s+1}$ acts by $1$ (as $s_{\bs\lambda}\in \mathcal{Z}$), and for the rest we use the equivariance of $\phi_{n,d}^{n-s,s}$ for the action of $\tau_j$, $j\neq s$ from Lemma~\ref{lem:averows} and the above intertwining property of $\pi_{s,n,d}$.

Next, with respect to the $MU(n)$-action, the vector $s_{\bs\lambda}$ has co-depth $n-s$ (since it is a non-zero linear combination of the $\mathbf{s}_{S,\bfr}$'s) and lies in the isotypic component of $L_{\bs\lambda'}$ for some $\bs\lambda'=(\lambda',1^s)$ (since it is an element of $\mathcal{Z}$). Furthermore, with respect to the $MU(n)$-action, the vector $s_{\bs\lambda}$ has weight $\lambda +\omega_s'$, where $\omega_s'=(0,\dots,0,1,\dots,1)=(0^{n-s}1^s)$. This follows from Equation~\eqref{eq:kaction} describing the $k_i^{\pm}$-action and the fact that when $\mathbf{s}_{\bs\lambda}$ is expressed in terms of the decorated matrix basis $\{m_{\bfr,\bfj}\}$, each non-zero summand has exactly one more $1$ in the bottom $s$ rows as the ones in the expression of $s_{\lambda}$.

By Lemma~\ref{lem:oldepth}(2), there exists a vector $s_{\bs\lambda}'$ with the same $\LL_i$-eigenvalues as $s_{\bs\lambda}$ such that $MU(n)$ acts on $s_{\bs\lambda}'$ with weight $\lambda+\omega_s$, where $\omega_s=(1,\dots,1,0,\dots,0)=(1^{s}0^{n-s})$. Therefore, the vector $s_{\bs\lambda}'$ must lie in the isotypic component of the $MH_d$-representation $M^{\bs\lambda}$ and the $MU(n)$-representation $L_{
\bs\lambda'}$ for some $\bs\lambda'=(\lambda',1^s)$ such that $\lambda'\geq \lambda$.

Using induction on $\lambda$ with respect to the partial order on the set of partitions of $d-s$, we conclude that $M^{\bs\lambda}$ pairs with $L_{\bs\lambda}$ for all $\bs\lambda$, completing the proof.

\bibliographystyle{alpha}
\bibliography{mirabolic-sln-biblist} 

\end{document}